\begin{document}

\def\A{\mathbb{A}}
\def\B{\mathbf{B}}
\def \C{\mathbb{C}}
\def \F{\mathbb{F}}
\def \K{\mathbb{K}}

\def \Z{\mathbb{Z}}
\def \P{\mathbb{P}}
\def \R{\mathbb{R}}
\def \Q{\mathbb{Q}}
\def \N{\mathbb{N}}
\def \Z{\mathbb{Z}}

\def\B{\mathcal B}
\def\e{\varepsilon}

\def\cA{{\mathcal A}}
\def\cB{{\mathcal B}}
\def\cC{{\mathcal C}}
\def\cD{{\mathcal D}}
\def\cE{{\mathcal E}}
\def\cF{{\mathcal F}}
\def\cG{{\mathcal G}}
\def\cH{{\mathcal H}}
\def\cI{{\mathcal I}}
\def\cJ{{\mathcal J}}
\def\cK{{\mathcal K}}
\def\cL{{\mathcal L}}
\def\cM{{\mathcal M}}
\def\cN{{\mathcal N}}
\def\cO{{\mathcal O}}
\def\cP{{\mathcal P}}
\def\cQ{{\mathcal Q}}
\def\cR{{\mathcal R}}
\def\cS{{\mathcal S}}
\def\cT{{\mathcal T}}
\def\cU{{\mathcal U}}
\def\cV{{\mathcal V}}
\def\cW{{\mathcal W}}
\def\cX{{\mathcal X}}
\def\cY{{\mathcal Y}}
\def\cZ{{\mathcal Z}}

\def\f{\frac{|\A||B|}{|G|}}
\def\AB{|\A\cap B|}
\def \Fq{\F_q}
\def \Fqn{\F_{q^n}}

\def\({\left(}
\def\){\right)}
\def\fl#1{\left\lfloor#1\right\rfloor}
\def\rf#1{\left\lceil#1\right\rceil}
\def\Res{{\mathrm{Res}}}

\newcommand{\comm}[1]{\marginpar{
\vskip-\baselineskip 
\raggedright\footnotesize
\itshape\hrule\smallskip#1\par\smallskip\hrule}}

\newtheorem{lem}{Lemma}
\newtheorem{lemma}[lem]{Lemma}
\newtheorem{prop}{Proposition}
\newtheorem{proposition}[prop]{Proposition }
\newtheorem{thm}{Theorem}
\newtheorem{theorem}[thm]{Theorem}
\newtheorem{cor}{Corollary}
\newtheorem{corollary}[cor]{Corollary}
\newtheorem{prob}{Problem}
\newtheorem{problem}[prob]{Problem}
\newtheorem{ques}{Question}
\newtheorem{question}[ques]{Question}
\newtheorem{rem}{Remark}

\title{Sumsets of reciprocals in prime fields and multilinear Kloosterman sums}
\author{{J.~Bourgain}\\
\normalsize{Institute for Advanced Study,}\\
\normalsize {Princeton, NJ 08540, USA}\\
\normalsize{\tt bourgain@ias.edu} \\
\and\\
{M. Z. Garaev}
\\
\normalsize{Centro de Ciencias Matem\'{a}ticas,}
\\
\normalsize{Universidad Nacional Aut\'onoma de M\'{e}xico,}\\
\normalsize{Morelia 58089, Michoac\'{a}n, M\'{e}xico}\\
\normalsize{\tt garaev@matmor.unam.mx}
 }


\date{\empty}

\pagenumbering{arabic}

\maketitle

\begin{abstract}
We obtain new results on additive properties of the set
$$
I^{-1}= \{x^{-1}: \quad x\in I\}
$$
where $I$ is an arbitrary interval in the field of residue classes modulo a large prime $p$. We combine our results
with multilinear exponential sum estimates and obtain new results on incomplete multilinear Kloosterman sums.
\end{abstract}
\maketitle

\newpage

\section{Introduction}

In what follows, $\varepsilon>0$ is an arbitrary fixed constant, $\F_p$ is the field of residue classes modulo a large prime $p$ which frequently will be associated with the set $\{0,1,\ldots, p-1\}$. Given an integer $x$ coprime to $p$ (or an element $x$ from $\F_p^{*}=\F_p\setminus\{0\}$) we use $x^*$ or $x^{-1}$ to denote its multiplicative inverse modulo~$p$.

Let $I$ be a non-zero interval in $\F_p$. Additive properties of the reciprocal-set
$$
I^{-1}=\{x^{-1}:\quad x\in I\},
$$
with a subsequent application to Kloosterman sums have been considered in~\cite{B1}.
Among other results, it has been shown there
that for any $\delta>0$ there exists $k\in \Z_{+}$ such that the sumset
$$
k(I^{-1})=\{x_1^{-1}+\ldots+x_{k}^{-1}:\quad x_i\in I\}
$$
satisfies
$$
|k(I^{-1})|>p^{-\delta}\min\{|I|^2, p\}.
$$
In the most interesting case  $|I|<p^{1/2}$ this implies that $|k(I^{-1})|>|I|^2 p^{-\delta}$.
From some recent results in~\cite{CillGar} (see Lemma~\ref{lem:CillGar} below) it follows that
\begin{equation}
\label{eqn:CillGar I*+I*}
|I^{-1}+I^{-1}|>\min\{|I|^2, \sqrt{p |I|}\}|I|^{o(1)}.
\end{equation}
In particular, if $|I|<p^{1/3}$, then
$$
|I^{-1}+I^{-1}|>|I|^{2+o(1)}.
$$

The aim of the present paper is to establish new additive properties of the set $I^{-1}$. We then combine our results with
recent estimates of multilinear exponential sum bounds from~\cite{B2}  and obtain new results on multilinear Kloosterman sums.

The structure of the paper is as follows. In section 2 we state our results on additive properties of the set $I^{-1}$
and on estimates of Kloosterman sums. In section 3 we give some basic preliminaries which are used throughout the paper. In sections 4--8
we give some backgrounds and prove preliminary lemmas. The proof of our results on additive properties of reciprocals on intervals (Theorems~\ref{thm:kI*}-\ref{thm:kI*=kI* I=[1,N] prime}) is given in section 9. The proof of Theorems~\ref{thm:Kloost double 18/37}-\ref{thm:Kloost 1/2} are given in section 10.
In section~11 we give the proof of Theorem~\ref{thm:Archimed} on Archimedian counterpart of Karatsuba's estimate, in section~12 we give the proof of Theorem~\ref{thm:pix-pix-y} on $\pi(x)-\pi(x-y)$, Theorem~\ref{thm:linear sqrt log p} on a linear Kloosterman sums and Theorem~\ref{thm:BrunTitch} on Brun-Titchmarsh theorem.

In this paper we consider only the case of prime modulus. The case of composite modulus will be considered in a forthcoming paper.

\section{Statement of results}

\subsection{Reciprocals of intervals}

We recall that $I$ denotes an arbitrary non-zero interval in $\F_p$. We first start with results on additive properties of $I^{-1}$.

\begin{theorem}
\label{thm:kI*} For any fixed positive integer constant $k$ the number $J_{2k}$ of solutions of the congruence
$$
x_1^{-1}+\ldots+x_k^{-1}=x_{k+1}^{-1}+\ldots+x_{2k}^{-1},\qquad x_1,\ldots,x_{2k}\in I,
$$
satisfies
\begin{equation}
\label{eqn:thm kI*}
J_{2k}<\Bigl(|I|^{2k^2/(k+1)}+\frac{|I|^{2k}}{p}\Bigr)|I|^{o(1)}.
\end{equation}
\end{theorem}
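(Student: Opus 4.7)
The approach I would take combines an induction on $k$ with the sumset estimate \eqref{eqn:CillGar I*+I*}. Setting $A := I^{-1}$, the quantity $J_{2k}$ is the $k$-fold additive energy of $A$, namely $J_{2k}=\sum_t r_{kA}(t)^2$. The base case $k=1$ is trivial since $J_2=|I|$. The structural signature of the target exponent $2k^2/(k+1)=2(k-1)+2/(k+1)$ suggests an iterated Cauchy--Schwarz scheme, saving $|I|^{2/(k+1)}$ at each step over the trivial count $|I|^{2k-1}$.

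I would first pass to the Fourier side via Parseval:
\[
p\,J_{2k}=\sum_{r\in\F_p}|K(r)|^{2k},\qquad K(r)=\sum_{x\in I}e_p(r\,x^{-1}).
\]
The trivial frequency $r=0$ contributes exactly $|I|^{2k}/p$, matching the second term of the claimed bound. The contribution of $r\neq 0$ is an incomplete Kloosterman moment, which must be shown to be at most $p\cdot|I|^{2k^2/(k+1)+o(1)}$; this is where the multilinear Kloosterman machinery of~\cite{B2} and the preliminary lemmas of Sections~4--8 should feed in.

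The combinatorial heart is the inductive step: one aims for an inequality roughly of the form $J_{2k}^{k+1}\le|I|^{2}\cdot J_{2(k-1)}^{k}\cdot(\text{sumset factor})$, where the sumset factor is controlled by \eqref{eqn:CillGar I*+I*} propagated via Plünnecke--Ruzsa into a lower bound $|kI^{-1}|\ge|I|^{2k/(k+1)-o(1)}$, valid as long as $|I|$ stays below the threshold $p^{(k+1)/(2k)}$ at which the random-like term $|I|^{2k}/p$ starts to dominate.

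The main obstacle is that the sharp exponent $2k^2/(k+1)$ does not follow from naive tools: the simple Cauchy--Schwarz $J_{2k}\cdot|kA|\ge|I|^{2k}$ only yields a lower bound on $J_{2k}$, and the log-convexity $J_{2k}^2\le J_{2(k-1)}J_{2(k+1)}$ is likewise in the wrong direction for upward induction. One must exploit the specific reciprocal structure of $A=I^{-1}$ at each step, either through an incidence count between $I^{-1}\times I^{-1}$ and the family of hyperbolas $x^{-1}+y^{-1}=t$, or via a Stepanov/Weil-type argument on the varieties $\sum_{i=1}^{k}x_i^{-1}=t$ inside the box $I^{k}$. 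Interpolating cleanly between the combinatorial bound $|I|^{2k^2/(k+1)}$ and the Fourier-random bound $|I|^{2k}/p$ across the transition $|I|\asymp p^{(k+1)/(2k)}$ will require careful bookkeeping of the Weil error terms.
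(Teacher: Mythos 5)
Your proposal is a sketch of how one \emph{might} approach the problem, but it does not constitute a proof, and it misses the mechanism that actually drives the paper's argument. The first concrete difficulty is a circularity: you propose to feed a lower bound $|kI^{-1}|>|I|^{2k/(k+1)-o(1)}$ into your inductive step, but that bound is precisely Corollary~\ref{cor:thm kI*}, i.e.\ a \emph{consequence} of the theorem you are trying to prove (and in any case Pl\"unnecke--Ruzsa gives upper, not lower, bounds on iterated sumsets, and the only unconditional lower bound coming from~\eqref{eqn:CillGar I*+I*} is $|I^{-1}+I^{-1}|\gtrsim|I|^{2}$ in the regime $|I|<p^{1/3}$, which does not self-improve). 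The proposed inductive inequality $J_{2k}^{k+1}\le|I|^{2}J_{2(k-1)}^{k}\cdot(\text{sumset factor})$ is never derived, and a quick consistency check shows that to land on the exponent $2k^{2}/(k+1)$ the unspecified sumset factor would have to be of size $|I|^{4(k-1)}$, which exceeds $|kI^{-1}|\le|I|^{k}$ for all $k\ge2$; so as written this inequality cannot be what is needed. You also invoke ``multilinear Kloosterman machinery'' to control the nonzero Fourier modes, but in this paper the logical arrow runs in the opposite direction: the moment bound $J_{2k}$ is an \emph{input} to the Kloosterman estimates, not an output of them, so that route is unavailable here.

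What the paper actually does has none of this structure. It is not an induction on $k$. The key idea is a change of variables in the interval itself: one writes elements of $I=[a+1,a+N]$ as $x=u'-vy$ with $u'\in[a+1,a+2N]$, $y\sim N^{2/(k+1)}$, $v\sim N^{(k-1)/(k+1)}$, which embeds the original count into a count over a dilated family indexed by the auxiliary variables $(v,y)$. The multiplicative-energy bound for two intervals (Lemma~\ref{lem:multEnergyTwoInt}, the Ayyad--Cochrane--Zheng/Friedlander--Iwaniec input) is used to extract a good set of dilates $\cV$ and to perform a dyadic level decomposition $S_{v,B}$ of $I_1$ according to the multiplicities $\eta_v(u')$; H\"older's inequality then reduces the count to a congruence in which $u'$ appears \emph{once} and only $(v,y_1,\dots,y_{2k})$ vary, and this is bounded by the elementary polynomial-root observation from the Preliminaries (at most $O(|X|^{n}|Y|+|X|^{2n})$ solutions). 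Optimizing the dyadic level $B$ produces $N^{2k^{2}/(k+1)+o(1)}$, and the second term $|I|^{2k}/p$ then comes not from the Fourier mode $r=0$ but from partitioning $I$ into $\sim Np^{-(k+1)/(2k)}$ subintervals of the threshold length $p^{(k+1)/(2k)}$ and applying the first case together with H\"older. Your Parseval framing and the heuristic ``save $|I|^{2/(k+1)}$ at each step'' are reasonable intuitions, but the actual saving mechanism -- the $u'=x+vy$ dilation combined with multiplicative energy on intervals -- is absent from the proposal, and without it there is no path to the exponent $2k^{2}/(k+1)$.
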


Recall that~\eqref{eqn:thm kI*} is equivalent to saying that for any $\varepsilon>0$ there exists $c=c(k;\varepsilon)>0$ such that
$$
J_{2k}<c\Bigl(|I|^{2k^2/(k+1)}+\frac{|I|^{2k}}{p}\Bigr)|I|^{\varepsilon}.
$$

\begin{corollary}
\label{cor:thm kI*}
Let $|I|<p^{1/2}$. Then for any fixed positive integer constant $k$,
$$
|k(I^{-1})|>|I|^{2k/(k+1)+o(1)}.
$$
\end{corollary}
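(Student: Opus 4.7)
The plan is a standard energy/Cauchy--Schwarz deduction from Theorem~\ref{thm:kI*}. Let $S=k(I^{-1})$ and for $s\in S$ write $r(s)$ for the number of representations $s=x_1^{-1}+\cdots+x_k^{-1}$ with $x_i\in I$. Then $\sum_{s\in S}r(s)=|I|^k$ and $\sum_{s\in S}r(s)^2=J_{2k}$, so by Cauchy--Schwarz
$$
|I|^{2k}\;=\;\Bigl(\sum_{s\in S} r(s)\Bigr)^2\;\le\;|S|\sum_{s\in S}r(s)^2\;=\;|S|\,J_{2k}.
$$
Hence $|S|\ge |I|^{2k}/J_{2k}$.

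Next I would plug in the bound from Theorem~\ref{thm:kI*}, which gives
$$
|k(I^{-1})|\;\ge\;\frac{|I|^{2k}}{\bigl(|I|^{2k^2/(k+1)}+|I|^{2k}/p\bigr)|I|^{o(1)}}.
$$
The only thing left is to verify that under the hypothesis $|I|<p^{1/2}$ the first term in the denominator dominates, i.e.\ $|I|^{2k^2/(k+1)}\ge |I|^{2k}/p$. This is equivalent to $p\ge |I|^{2k-2k^2/(k+1)}=|I|^{2k/(k+1)}$, and since $2k/(k+1)<2$ we have $|I|^{2k/(k+1)}<|I|^2<p$, so this is automatic. Therefore
$$
|k(I^{-1})|\;\ge\;\frac{|I|^{2k}}{|I|^{2k^2/(k+1)+o(1)}}\;=\;|I|^{\,2k-2k^2/(k+1)+o(1)}\;=\;|I|^{\,2k/(k+1)+o(1)},
$$
as claimed.

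There is no real obstacle here; the content is entirely in Theorem~\ref{thm:kI*}, and the corollary is the routine conversion of a fourth-moment (more precisely, $2k$-th moment) energy bound into a lower bound on the sumset via Cauchy--Schwarz, with the mild check that the ``trivial'' term $|I|^{2k}/p$ coming from the uniform distribution does not spoil the estimate when $|I|<p^{1/2}$.
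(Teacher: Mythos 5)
Your proof is correct and is exactly the argument the paper intends: the Cauchy--Schwarz energy-to-sumset conversion you use is precisely the identity $|A_1+\cdots+A_n|\ge |A_1|^2\cdots|A_n|^2/T$ set up in the paper's Preliminaries (Section~3), applied with $A_i=I^{-1}$ and $T=J_{2k}$, followed by the same routine check that $|I|<p^{1/2}$ makes the $|I|^{2k^2/(k+1)}$ term dominate in Theorem~\ref{thm:kI*}.
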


We remark that for $k=3$ one can prove the bound
$$
|I^{-1}+I^{-1}+I^{-1}|>|I|^{1.55+o(1)}.
$$

We next consider a ternary additive congruence
with $I^{-1}$.

\begin{theorem}
\label{thm:3I* small |I|} Let $|I|<p^{3/46}$. Then for any element $\lambda\in \F_p$ with
\begin{equation}
\label{eqn:lambda not int I* and 0}
\lambda\not\in I^{-1}\cup\{0\},
\end{equation}
the number $J$ of solutions of the congruence
$$
x^{-1}+y^{-1}+z^{-1}=\lambda,\quad x,y,z\in I,
$$
satisfies
$$
J<|I|^{2/3+o(1)}.
$$
\end{theorem}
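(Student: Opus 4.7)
The plan is to reduce the count to a hyperbola--interval incidence problem and then combine the sumset lower bound from Cilleruelo--Garaev with the energy bound from Theorem~\ref{thm:kI*}, using the hypothesis $\lambda\notin I^{-1}$ to rule out degenerate configurations.

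First, I would multiply the equation $1/x+1/y+1/z=\lambda$ by $xyz$ to obtain the symmetric cubic $xy+yz+zx=\lambda xyz$. Fixing $z\in I$, the hypothesis $\lambda\notin I^{-1}$ guarantees $\lambda z-1\neq 0$, so setting $\alpha=\alpha(z)=z/(\lambda z-1)$ the equation becomes the hyperbola
\[
(x-\alpha)(y-\alpha)=\alpha^{2}.
\]
Hence $J=\sum_{z\in I}M(z)$ where $M(z)=r_{2}(\lambda-1/z)$ is the number of $(x,y)\in I^{2}$ on this hyperbola, with $r_{2}(s)=\#\{(x,y)\in I^{2}:1/x+1/y=s\}$. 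Writing $A=I^{-1}$, this realizes $J$ as the number of incidences between the Cartesian product $A\times A\subset\F_{p}^{2}$ and the pencil of lines $\{X+Y=\lambda-1/z:z\in I\}$.

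Next, I would exploit the energy bound $J_{4}=\sum_{s}r_{2}(s)^{2}\leq|I|^{8/3+o(1)}$ from Theorem~\ref{thm:kI*} with $k=2$ and the Cilleruelo--Garaev bound $|A+A|\geq|I|^{2+o(1)}$ from \eqref{eqn:CillGar I*+I*} (applicable since $|I|<p^{3/46}<p^{1/3}$). A naive Cauchy--Schwarz over $z\in I$ gives only
\[
J\leq(|I|\,J_{4})^{1/2}\leq|I|^{11/6+o(1)},
\]
which is far from the target. So the main strategy is to perform a dyadic decomposition of $I$ according to the size of $M(z)$, controlling the "heavy" dyadic levels through $J_{4}$ by Cauchy--Schwarz and the "light" levels by trivial counting. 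The residual exceptional set of $z$ with abnormally large $M(z)$ is handled using $\lambda\notin I^{-1}$, which ensures that $z\mapsto\alpha(z)$ is a non-degenerate M\"obius transformation on $I$ and that the cubic surface $xy+yz+zx=\lambda xyz$ contains no "trivial" line of solutions meeting $I^{3}$. The preliminary lemmas of sections~4--8 should then supply the pointwise and average bounds on $M(z)$ (via point--line incidences in $\F_{p}^{2}$ and bounds on points of a hyperbola in a short interval) needed to complete the estimate.

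The main obstacle is bridging the gap between the naive $|I|^{11/6+o(1)}$ and the target $|I|^{2/3+o(1)}$. This seems to require a genuine sum--product argument in $\F_{p}$ exploiting both the large additive sumset $|A+A|$ from \eqref{eqn:CillGar I*+I*} and the essentially minimal multiplicative energy $E^{\times}(A)\lesssim|I|^{2+o(1)}$ (from the divisor structure of the interval $I$), implemented through an incidence--geometric estimate adapted to the Cartesian product $A\times A$. The specific threshold $|I|<p^{3/46}$ presumably arises from simultaneously balancing the range of validity of \eqref{eqn:CillGar I*+I*}, the $\F_{p}^{2}$ incidence inequality, and an iterated application of Pl\"unnecke--Ruzsa, so that all error terms stay below $|I|^{2/3+o(1)}$.
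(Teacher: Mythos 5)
Your proposal correctly notes that naive Cauchy--Schwarz using the $J_4$ energy only gives $|I|^{11/6+o(1)}$, and then frankly admits the gap down to the target $|I|^{2/3+o(1)}$ is unresolved. That gap is not a detail you can hand-wave past: it is the entire content of the theorem, and the incidence/sum--product strategy you sketch (using $|A+A|$ large from Cilleruelo--Garaev plus small multiplicative energy of $A$ plus Pl\"unnecke--Ruzsa) does not close it. The bound $|I|^{2/3}$ is far below what any pure incidence estimate for $A\times A$ against a pencil of $|I|$ lines could give; even a best-possible incidence count would land you near $|I|^{1}$ (since for $\lambda\in I^{-1}\cup\{0\}$ the count really is of order $|I|$), so the argument must crucially exploit the arithmetic restriction $\lambda\notin I^{-1}\cup\{0\}$ in a structural, not merely bookkeeping, way.

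The paper instead proceeds through the geometry of numbers (Lemma~\ref{lem:3I* small |I|}). Assume $J>N^{2/3+\varepsilon}$. Writing $I=\{a+1,\dots,a+N\}$ and clearing denominators, each solution $(x,y,z)$ yields a lattice point of the rank-$3$ lattice $\Gamma\subset\Z^3$ determined by $a$ and $\lambda$, lying in a small box $D$. Since $|D\cap\Gamma|\ge J/6$ is large, the successive minima of $D$ with respect to $\Gamma$ are small, and a case analysis (using Lemma~\ref{lem:CillGarBGKS}, Lemma~\ref{lem:Identity}, Corollary~\ref{cor:CillGar}, and a resultant computation) shows the only consistent case forces \emph{both} $a$ and $\lambda^{-1}$ to admit representations $a\equiv\Delta_4'/\Delta_3$, $\lambda^{-1}\equiv\Delta_4''/\Delta_3\pmod p$ with integers of size $N^4/J$ and $N^3/J$. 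One then substitutes these back; because $|I|<p^{3/46}$ and $J>N^{2/3}$, both sides of the resulting congruence have size $O(N^{10})=o(p)$, so the congruence becomes a genuine identity over $\Z$, and Lemma~\ref{lem:dioph 3I*} (an elementary divisor argument, hinging on the hypothesis $\lambda\notin I^{-1}$ to keep certain coefficients nonzero) counts the solutions as $N^{2/3+o(1)}$. This is where your approach misses the key idea: the hypothesis $\lambda\notin I^{-1}\cup\{0\}$ is used inside the Diophantine Lemma~\ref{lem:dioph 3I*} (and in the $\lambda_2>1$ case of Lemma~\ref{lem:3I* small |I|}), not as a harmless nondegeneracy condition on the M\"obius map $z\mapsto z/(\lambda z-1)$.

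Also, your guess for the origin of the threshold $3/46$ is off. It comes from the resultant bound in Lemma~\ref{lem:3I* small |I|}: one needs $|\Res(P,Q)|<10^{18}N^{18}/J^4<p$ to conclude $\Res(P,Q)=0$, and since the argument runs under the assumption $J>N^{2/3}$, this reduces to $N^{46/3}<p$, i.e.\ $N<p^{3/46}$. No Pl\"unnecke--Ruzsa iteration or incidence-estimate balancing enters.
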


The restriction~\eqref{eqn:lambda not int I* and 0} is motivated by the possibility of $|I|^{1+o(1)}$ solutions otherwise (for instance,  $z=\lambda^{-1}$ and $x+y=0$).

From Theorem~\ref{thm:3I* small |I|} it easily follows that for  $|I|<p^{3/46}$, one has the bound
$$
|I^{-1}+I^{-1}+I^{-1}|>|I|^{7/3+o(1)}.
$$
The following statements show that for sufficiently small $I$ one has optimal bounds.

\begin{theorem}
\label{thm:3I*=3I* small |I|}
Let $|I|<p^{1/18}$.
Then the number $J_6$ of solutions of the congruence
$$
x_1^{-1}+x_2^{-1}+x_3^{-1}= x_4^{-1}+x_5^{-1}+x_6^{-1},\qquad x_1,\ldots, x_6\in I,
$$
satisfies
$$
J_6<|I|^{3+o(1)}.
$$
In particular, for $|I|<p^{1/18}$ we have
$$
|I^{-1}+I^{-1}+I^{-1}|>|I|^{3+o(1)}.
$$
\end{theorem}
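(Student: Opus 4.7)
I would begin by expanding $J_6 = \sum_{\lambda \in \F_p} r_3(\lambda)^2$, where $r_3(\lambda) = |\{(x,y,z) \in I^3 : x^{-1}+y^{-1}+z^{-1} = \lambda\}|$, so that $\sum_\lambda r_3(\lambda) = |I|^3$. Since $J_6 \leq (\max_\lambda r_3(\lambda)) \cdot |I|^3$, the target $J_6 \leq |I|^{3+o(1)}$ would follow from a pointwise bound $r_3(\lambda) \leq |I|^{o(1)}$ on the generic range, together with a separate treatment of the small exceptional set $\lambda \in I^{-1} \cup \{0\}$.

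The pointwise bound from Theorem~\ref{thm:3I* small |I|} (applicable since $p^{1/18} < p^{3/46}$) gives only $r_3(\lambda) \leq |I|^{2/3+o(1)}$ for $\lambda \notin I^{-1} \cup \{0\}$, leading to the weaker estimate $\sum_{\lambda \text{ good}} r_3(\lambda)^2 \leq |I|^{11/3+o(1)}$, which falls short by a factor of $|I|^{2/3}$. The extra strength in the regime $|I| < p^{1/18}$ must therefore come from an Archimedean lifting of the congruence.

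After clearing denominators, the congruence $\sum_{i=1}^3 1/x_i \equiv \sum_{i=4}^6 1/x_i \pmod p$ reads
\[
(x_2 x_3 + x_1 x_3 + x_1 x_2)\, x_4 x_5 x_6 \equiv (x_5 x_6 + x_4 x_6 + x_4 x_5)\, x_1 x_2 x_3 \pmod p,
\]
and, when $I$ is represented by small positive integers, both sides are bounded by $3|I|^5 < 3 p^{5/18} < p$, so the congruence becomes the rational identity $\sum 1/x_i = \sum 1/x_j$ in $\mathbb{Q}$. I would then invoke Theorem~\ref{thm:Archimed}, the Archimedean counterpart of Karatsuba's estimate, which bounds the number of integer $6$-tuples in an integer set of cardinality $|I|$ satisfying this identity by $|I|^{3+o(1)}$; this furnishes the required pointwise control needed to close out the argument.

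The main obstacle is that an arbitrary interval of $\F_p$ may be represented only by integers of size comparable to $p$, in which case the naive size bound $3|I|^5 < p$ collapses. To circumvent this I would exploit the multiplicative invariance $J_6(aI) = J_6(I)$, which follows from $1/(a x) = a^{-1} \cdot 1/x$ and hence preserves the equation $\sum 1/x_i = \sum 1/x_j$. Combined with a Dirichlet-type choice of the dilation parameter $a$, this lets one replace $I$ by an arithmetic progression of small integer entries; the Archimedean estimate of Theorem~\ref{thm:Archimed} must then be flexible enough to handle reciprocals over APs rather than only genuine intervals. This passage from intervals to APs is the technically most delicate step; once it is in place, combining the Archimedean $|I|^{3+o(1)}$ bound with the trivial contribution from the $O(|I|)$ exceptional values in $I^{-1} \cup \{0\}$ yields $J_6 \leq |I|^{3+o(1)}$.
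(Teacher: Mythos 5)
Your decomposition $J_6 = \sum_\lambda r_3(\lambda)^2$, the separate treatment of the exceptional set $\lambda \in I^{-1}\cup\{0\}$, and the overall strategy of lifting the congruence to a rational identity are all in line with the paper's proof. But there are two concrete problems that stop your argument from closing.

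First, the result you invoke for the final count is misattributed. Theorem~\ref{thm:Archimed} is an \emph{exponential sum} estimate (a Weyl--van der Corput type bound for $\sum e^{i\xi/(n_1 n_2)}$); it does not count integer solutions of $\sum 1/x_i = \sum 1/x_j$. The counting results in the paper are Lemma~\ref{lem:Karatsuba} (for $x_i \in [1,N]$) and, crucially for this theorem, Lemma~\ref{lem:symmetricComplexSigma}, which handles the shifted equation $\sum 1/(\sigma+x_i) = \sum 1/(\sigma+x_j)$ for an arbitrary complex $\sigma$. The shift is unavoidable here because after the lift the interval becomes $\sigma + [1,N]$ for a non-integral rational $\sigma$.

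Second, and more seriously, the proposed mechanism for the lift --- dilate by a Dirichlet-chosen parameter to replace $I$ by ``small'' residues --- does not give small enough integers. For $I = \{a+1,\dots,a+N\}$ and a Dirichlet dilation $d$, the best you can guarantee is that all elements of $dI \pmod p$ lie in $[-(pN)^{1/2}, (pN)^{1/2}]$ or so. After clearing denominators in the six-term identity, the integers one compares are of magnitude roughly $((pN)^{1/2})^5 \gg p$, so the congruence does \emph{not} upgrade to an equality. The paper circumvents this by a different and essentially stronger input: Lemma~\ref{lem:3I* small |I|} (the same lattice-point lemma that drives Theorem~\ref{thm:3I* small |I|}) shows that whenever $r_3(\lambda)$ is not negligibly small, \emph{both} $a$ and $\lambda^{-1}$ admit Farey-type representations $\Delta'_4/\Delta_3$ and $\Delta''_4/\Delta_3 \pmod p$ with numerator and denominator of size $O(N^4)$ and $O(N^3)$ respectively --- far smaller than the $p^{1/2}$-sized representatives Dirichlet alone would give. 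Comparing these with fixed Dirichlet representatives $a_0/b_0$ and $u_\lambda/v_\lambda$ then yields honest equalities of rationals, and only at that point does the congruence lift cleanly to an identity over $\mathbb{Q}$ with $\sigma = a_0/b_0$, to which Lemma~\ref{lem:symmetricComplexSigma} applies. Without this extraction of an abnormally good rational approximation, your argument does not close.
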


\begin{theorem}
\label{thm:kI*=kI* small |I|}
There is an absolute constant $c>0$ such that for any fixed positive integer constant $k$ and any interval $I\subset \F_p$ with $|I|<p^{c/k^2}$
the number $J_{2k}$ of solutions of the congruence
$$
x_1^{-1}+\ldots+x_k^{-1}= x_{k+1}^{-1}+\ldots+x_{2k}^{-1},\qquad x_1,\ldots, x_{2k}\in I,
$$
satisfies
$$
J_{2k}<|I|^{k+o(1)}.
$$
In particular, for such intervals $I$ we have
$$
|k(I^{-1})|>|I|^{k+o(1)}.
$$
\end{theorem}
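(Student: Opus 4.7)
The plan is to pass to a Fourier representation of $J_{2k}$ and interpolate by H\"older between Parseval and a single high moment, bootstrapping from a direct combinatorial bound on a higher-order energy.

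Setting $S(t) = \sum_{x\in I} e_p(t/x)$, one has
$$
J_{2k} = \frac{1}{p}\sum_{t\in\F_p}|S(t)|^{2k} = \frac{|I|^{2k}}{p} + \frac{1}{p}\sum_{t\neq 0}|S(t)|^{2k},
$$
and the first term is dominated by $|I|^k$ under $|I|<p^{c/k^2}$ with $c$ small. For any auxiliary integer $m>k$, H\"older's inequality with the exponent $\lambda=(m-k)/(m-1)$ yields
$$
\sum_{t\neq 0}|S(t)|^{2k}\leq\Bigl(\sum_t|S(t)|^2\Bigr)^{\lambda}\Bigl(\sum_t|S(t)|^{2m}\Bigr)^{1-\lambda} = (p|I|)^{\lambda}(pJ_{2m})^{1-\lambda},
$$
using Parseval $\sum_t|S(t)|^2=p|I|$ and the Fourier identity $\sum_t|S(t)|^{2m}=pJ_{2m}$. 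A direct computation shows that any input of the form $J_{2m}\leq |I|^{m+\varepsilon}$ forces $J_{2k}\leq |I|^{k+(k-1)\varepsilon/(m-1)}$, which becomes $|I|^{k+o(1)}$ on letting $m\to\infty$ and $\varepsilon\to 0$.

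Thus the theorem reduces to proving the same bound $J_{2m}\leq|I|^{m+o(1)}$ for some $m$ much larger than~$k$. I would do this by a direct lift to~$\Z$: multiplying the congruence $\sum x_j^{-1}\equiv\sum y_j^{-1}\pmod p$ through by $\prod_j x_jy_j$ produces an integer polynomial identity of degree~$2m-1$ whose two sides are bounded in magnitude by $k(\max I)^{2m-1}$. When $I\subset[1,|I|]$ this is at most $k|I|^{2m-1}\leq kp^{c(2m-1)/k^2}$, which for $c$ small and $m\leq Ck^2$ is less than $p/2$, so the congruence lifts to an identity in $\Z$. The count of integer solutions is then reduced to the classical problem of bounding the number of representations of a fixed rational as a sum of $m$ unit fractions with denominators in $I$, which admits a bound of $|I|^{m+o(1)}$ by divisor-type estimates; the base $k=3$ of the induction is furnished by Theorem~\ref{thm:3I*=3I* small |I|}.

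The main obstacle is that the direct lift fails for an interval $I=[a+1,a+|I|]$ with $a$ close to~$p$: then $\max I\sim p$ and clearing denominators produces quantities of magnitude $\sim p^{2m-1}$, far exceeding~$p/2$. To overcome this I would exploit the multiplicative invariance $J_{2m}(I^{-1})=J_{2m}((\lambda I)^{-1})$ for $\lambda\in\F_p^\ast$ to reduce to a generalized arithmetic progression, and appeal to the preliminary lemmas of sections~4--8 (together with the multilinear Kloosterman sum estimates of~\cite{B2}) which should provide the analogue of the divisor-type count in that setting. The exponent $k^2$ in $|I|<p^{c/k^2}$ ultimately reflects the maximal scale $m\sim k^2$ for which this lifted combinatorial analysis remains valid.
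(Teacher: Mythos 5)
Your Fourier/H\"older preamble is internally consistent but redundant: once you know $J_{2m}\le |I|^{m+o(1)}$ for some $m\ge k$ under the given hypothesis, you may as well take $m=k$ and be done. Worse, the constraint $|I|<p^{c/k^2}$ is fixed, so "letting $m\to\infty$" is incompatible with the lift (you need $|I|^{2m-1}<p$, which fails for large $m$). So the interpolation buys you nothing, and the real content is entirely in the claim that $J_{2k}\le |I|^{k+o(1)}$ already holds over a suitable lift, which is the theorem itself.

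The genuine gap is the arbitrary-interval case, and your proposed fix does not work. Multiplicative invariance $J_{2m}(I)=J_{2m}(\lambda I)$ is indeed true (clear: $(\lambda x)^{-1}=\lambda^{-1}x^{-1}$, and the equation is $\lambda^{-1}$-homogeneous), but $\lambda I$ is an arithmetic progression of common difference $\lambda$, not an interval anchored near~$0$, so this dilation does not reduce $I=[a+1,a+N]$ with $a$ large to the $[1,N]$ case, and the "divisor-type estimates" you invoke have no purchase on such a set. The paper's mechanism is quite different: writing each solution as $P_{\vec{x}}(a)\equiv 0\pmod p$ for the polynomial $P_{\vec{x}}(Z)=\sum_{j\le k}\prod_{i\ne j}(Z+x_i)-\sum_{j>k}\prod_{i\ne j}(Z+x_i)$, fixing a reference solution $\vec{c}$, and observing that $p\mid \Res(P_{\vec{x}},P_{\vec{c}})$. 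The resultant bound (Lemma~\ref{lem:DeterMagic}) shows $|\Res|\ll N^{4k^2-4k}$, so for $N<p^{1/(4k^2)}$ it vanishes, forcing $P_{\vec{x}}$ and $P_{\vec{c}}$ to share an algebraic root~$\sigma$. This trades the unknown shift $a\in\F_p$ for an algebraic number $\sigma$ of bounded degree and height, and the count of integer solutions $x_i\le N$ of $\sum 1/(\sigma+x_i)=\sum 1/(\sigma+x_j)$ is then controlled by Lemma~\ref{lem:symmetricComplexSigma} (an ideal-factorization/divisor argument in $O_{\mathbb{Q}(\sigma)}$), giving $N^{k+o(1)}$. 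You are right that the exponent $k^2$ is a "clearing denominators" threshold, but in the paper it is the size of the resultant of two degree-$\approx 2k$ polynomials, not the order of the auxiliary moment. Without the resultant step — or some substitute that removes the unknown parameter $a$ — your argument stalls exactly where you flag the obstacle.
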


\begin{rem}
From the proof it is clear that in Theorem~\ref{thm:kI*=kI* small |I|}  one can take $c=1/4$.
\end{rem}

\begin{theorem}
\label{thm:kI*=kI* I=[1,N]} Let $I=[1,N]$. Then
the number $J_{2k}$ of solutions of the congruence
$$
x_1^*+\ldots+x_k^*\equiv x_{k+1}^*+\ldots+x_{2k}^*\pmod p,\qquad x_1,\ldots, x_{2k}\in I,
$$
satisfies
$$
J_{2k}<(2k)^{90k^3}(\log N)^{4k^2}\Bigl(\frac{N^{2k-1}}{p}+1\Bigr)N^{k}.
$$
\end{theorem}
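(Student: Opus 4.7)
The plan is to lift the modular congruence to an integer Diophantine equation and count its solutions by divisor-type arguments. Setting $A=x_{1}\cdots x_{k}$ and $B=x_{k+1}\cdots x_{2k}$, both coprime to $p$ (we may harmlessly assume $N<p$, as otherwise the few multiples of $p$ are negligible), multiplication of the congruence through by $AB$ yields the integer identity
\begin{equation*}
F(x_{1},\ldots,x_{2k}) \;:=\; AB\Bigl(\sum_{i=1}^{k}\frac{1}{x_{i}}-\sum_{i=1}^{k}\frac{1}{x_{k+i}}\Bigr) \;=\; \ell p.
\end{equation*}
Writing $F=BA^{*}-AB^{*}$ with $A^{*}=\sum_{i}A/x_{i}$ and $B^{*}=\sum_{i}B/x_{k+i}$ shows $|F|\le 2kN^{2k-1}$, hence $|\ell|\le 2kN^{2k-1}/p$, so the number of admissible values of $\ell$ is $O(1+N^{2k-1}/p)$.

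Write $J_{2k}=\sum_{\ell}M_{\ell}$ with $M_{\ell}=\#\{(x_{1},\ldots,x_{2k})\in[1,N]^{2k}:F=\ell p\}$. The theorem will follow from the uniform estimate $M_{\ell}\le c_{k}(\log N)^{4k^{2}}N^{k}$ by summation over $\ell$. I would prove this bound by induction on $k$. For $\ell=0$ the identity reduces to the rational equation $\sum 1/x_{i}=\sum 1/x_{k+i}$, so $M_{0}=\sum_{q\in\Q}r_{k}(q)^{2}$ where $r_{k}(q)$ counts representations of $q$ as a sum of $k$ reciprocals with denominators in $[1,N]$. Separating one variable and applying Cauchy--Schwarz together with the divisor-function average $\sum_{n\le M}d(n)\ll M\log M$ reduces $\sum r_{k}(q)^{2}$ to $\sum r_{k-1}(q)^{2}$ at the cost of a polylogarithmic factor per step. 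For $\ell\ne 0$, since $\gcd(AB,p)=1$, the identity $F=\ell p$ forces rigid divisibility constraints on the products appearing in $BA^{*}-AB^{*}$, and a parallel induction applies with $\ell p$ playing the role of the rigid right-hand side whose factorizations are controlled by divisor bounds.

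The main obstacle will be carrying out this induction uniformly in $\ell$ and matching the claimed $(\log N)^{4k^{2}}$ exponent. Pointwise divisor bounds of the form $d(n)\ll_{\varepsilon}n^{\varepsilon}$ are too weak; the argument must exploit averaging over the auxiliary rational $q$ through repeated Cauchy--Schwarz combined with the divisor average above, so that each of the $O(k)$ inductive layers contributes at most $(\log N)^{O(k)}$, accumulating to $(\log N)^{O(k^{2})}$. The large prefactor $(2k)^{90k^{3}}$ absorbs the combinatorial overhead from iterating Cauchy--Schwarz and divisor estimates across these layers.
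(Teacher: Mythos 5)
The opening move of your proposal---lifting the congruence to the integer identity $F=BA^{*}-AB^{*}=\ell p$ with $|\ell|\le 2kN^{2k-1}/p$ and writing $J_{2k}=\sum_{\ell}M_{\ell}$---is in the same spirit as the paper's, but the pointwise bound $M_{\ell}\ll N^{k}(\log N)^{4k^{2}}$ for $\ell\ne 0$ is exactly where the argument breaks, and the paper does not prove the theorem this way.

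For $\ell=0$ your reduction of $M_{0}$ to $\sum_{q}r_{k}(q)^{2}$ is correct: this is Karatsuba's Lemma (Lemma~\ref{lem:Karatsuba}), which the paper simply invokes. For $\ell\ne 0$, however, the claim is not established by your sketch, and it is not a routine extension of the $\ell=0$ case. The natural divisor-bound argument---fix $x_{1},\ldots,x_{k-1},x_{k+1},\ldots,x_{2k-1}$, write the remaining equation as a bilinear Diophantine equation in $(x_{k},x_{2k})$, complete to a product, and apply $d(n)\ll_{\varepsilon}n^{\varepsilon}$---yields only $M_{\ell}\ll N^{2k-2+o(1)}$, which for $k\ge 3$ exceeds the target $N^{k}$ by a factor of $N^{k-2}$. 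Your appeal to ``rigid divisibility constraints'' coming from $\gcd(AB,p)=1$ and a ``parallel induction'' does not close this gap: there is no $L^{2}$/Cauchy--Schwarz symmetry to exploit when the right-hand side is a fixed nonzero $\ell p$, so the passage from $\sum r_{k}^{2}$ to $\sum r_{k-1}^{2}$ that drives the $\ell=0$ induction has no analogue. Moreover, pointwise divisor bounds give $N^{o(1)}$, not $(\log N)^{O(1)}$, so even the shape of your claimed estimate would require an averaging over $\ell p$ that is unavailable once $\ell$ is held fixed.

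The paper avoids any uniform pointwise bound on $M_{\ell}$ altogether. It writes $J_{2k}=\sum_{\lambda}|J(\lambda)|^{2}$, attaches to each residue $\lambda$ the lattice $\Gamma_{\lambda}=\{(u,v):\lambda u\equiv v\pmod p\}$ and the box $D=\{|u|\le N^{k},\,|v|\le kN^{k-1}\}$, and distinguishes two cases according to the second successive minimum of $D$ with respect to $\Gamma_{\lambda}$. When two short independent lattice vectors exist, a determinant argument forces $|D\cap\Gamma_{\lambda}|\le 30kN^{2k-1}/p$, and Cauchy--Schwarz over the lattice points reduces $\sum_{\lambda}|J(\lambda)|^{2}$ to the number of solutions of the integer system $x_{1}\cdots x_{k}=y_{1}\cdots y_{k}$, $\sum_{j}\prod_{i\ne j}x_{i}=\sum_{j}\prod_{i\ne j}y_{i}$, which is bounded by Karatsuba's Lemma. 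When only one short vector exists, every $(x_{1},\ldots,x_{k})\in J(\lambda)$ has the same rational value of $\sum 1/x_{i}$, and the sum over such $\lambda$ again collapses to Karatsuba's count. Thus Karatsuba's Lemma is used in an aggregated form inside a geometry-of-numbers decomposition, not as a per-$\ell$ estimate. To repair your argument you would either have to supply a genuine proof that $M_{\ell}\ll N^{k+o(1)}$ uniformly in $\ell\ne 0$ (which I do not see how to do directly for $k\ge 3$), or replace the per-$\ell$ decomposition with the $L^{2}$-by-$\lambda$ lattice decomposition as above.
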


We also give a version of Theorem~\ref{thm:kI*=kI* I=[1,N]}, where the variables $x_j$ are restricted to prime numbers.
By $\cP$ we denote the set of primes.

\begin{theorem}
\label{thm:kI*=kI* I=[1,N] prime} Let $I=[1,N]$. Then
the number $J_{2k}$ of solutions of the congruence
$$
x_1^*+\ldots+x_k^*\equiv x_{k+1}^*+\ldots+x_{2k}^*\pmod p,\qquad x_1,\ldots, x_{2k}\in I\cap\cP,
$$
satisfies
$$
J_{2k}<(2k)^{k}\Bigl(\frac{N^{2k-1}}{p}+1\Bigr)N^{k}.
$$
\end{theorem}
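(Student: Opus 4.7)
Plan: My strategy is to lift the congruence to an integer polynomial identity and partition the count by the integer value $m=F(\vec x)/p$, where $F$ is the cleared-denominators polynomial. The prime hypothesis is used crucially only to obtain a sharp bound on the diagonal ($m=0$) contribution; for the off-diagonal part, a trivial linearity-in-one-variable argument already suffices for the desired constant.

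First I would multiply the congruence by $X:=\prod_{i=1}^{2k}x_i$, which is coprime to $p$ since each $x_i$ is a prime different from $p$ (the contribution of tuples containing some $x_i=p$ is at most $2kN^{2k-1}$ and is absorbed into the final constant). This yields the integer identity
\[
F(\vec x):=\sum_{j=1}^{k}\prod_{\substack{i=1\\ i\ne j}}^{2k}x_i-\sum_{j=k+1}^{2k}\prod_{\substack{i=1\\ i\ne j}}^{2k}x_i\equiv 0\pmod p,
\]
with $|F(\vec x)|\le 2kN^{2k-1}$. Hence $F(\vec x)=mp$ for an integer $m$ with $|m|\le 2kN^{2k-1}/p$, and I decompose $J_{2k}=|\cS_0|+\sum_{m\ne 0}|\cS_m|$ where $\cS_m=\{\vec x:F(\vec x)=mp\}$.

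For the diagonal ($m=0$), the equation $F(\vec x)=0$ is equivalent to the rational identity $\sum_{j=1}^{k}1/x_j=\sum_{j=k+1}^{2k}1/x_j$. Since each $x_i$ is a prime with multiplicity at most $k$ in either multiset, comparing $v_q$-valuations for each prime $q>k$ (where the multiplicity, being at most $k<q$, cannot cancel modulo $q$, and is in fact recovered from the integer $q(\text{sum})\bmod q$) forces the multisets $\{x_1,\dots,x_k\}$ and $\{x_{k+1},\dots,x_{2k}\}$ to agree on primes exceeding $k$. The finitely many primes $q\le k$ contribute a bounded combinatorial factor, absorbable into the $(2k)^k$ constant. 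Hence $|\cS_0|\le k!\cdot\pi(N)^k\le k!\,N^k$.

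For the off-diagonal ($m\ne 0$), I exploit the linearity of $F$ in each variable. Writing $F=A(\vec x')+x_{2k}B(\vec x')$ with $\vec x'=(x_1,\ldots,x_{2k-1})$, for each fixed $\vec x'$ with $B(\vec x')\not\equiv 0\pmod p$ the constraint $F\equiv 0\pmod p$ pins $x_{2k}$ to a single residue class modulo $p$, giving at most $N/p+1$ primes in $[1,N]$ (each corresponding to a unique $m$). Summing over $\vec x'$ yields $\sum_m|\cS_m|\le N^{2k-1}(N/p+1)$. Combining: for $p\ge 2kN^{2k-1}$ the bound $|F|<p$ forces $m=0$, so $J_{2k}=|\cS_0|\le k!\,N^k\le(2k)^k N^k$; for smaller $p$, the trivial bound and the diagonal estimate together with the elementary inequalities $k!\le(2k)^k$ and $(2k)^kN^{k-1}\ge 1$ give the claim, after a careful handling of the intermediate regime where one must verify that $N^{2k-1}$ is dominated by $(2k)^k(N^{3k-1}/p+N^k)$ via the finer structure of how $m$-values contribute.

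The main obstacle is the diagonal step: the $v_q$-valuation argument is straightforward for $q>k$ but requires care for $q\le k$, where $q$ may divide the multiplicity and the naive valuation argument breaks; these finitely many cases must be absorbed into the constant $(2k)^k$ by a direct combinatorial enumeration. A secondary technical point is matching the constants precisely in the intermediate regime of $p$, which may require either a slight strengthening of the trivial off-diagonal bound or use of Brun--Titchmarsh-type counts for primes in arithmetic progressions.
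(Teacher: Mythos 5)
Your diagonal analysis captures the right idea: for primes $q>k$ the $q$-adic valuation argument does force the two multisets to agree, and the primes $q\le k$ contribute only a bounded combinatorial factor, so $|\cS_0|$ is at most a constant depending on $k$ times $\pi(N)^k$. This is essentially the one new ingredient the paper introduces; the published proof simply reruns the proof of Theorem~\ref{thm:kI*=kI* I=[1,N]}, substituting for Karatsuba's Lemma~\ref{lem:Karatsuba} the bound $(2k)^k(N/\log N)^k$ on the number of solutions of $\sum_{i\le k}1/x_i=\sum_{i>k}1/x_i$ in primes $x_i\le N$.

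The genuine gap is in your off-diagonal estimate, and that is where the real work of the theorem lies. Fixing $(x_1,\ldots,x_{2k-1})$ and pinning $x_{2k}$ to a single residue class modulo $p$ costs $\lfloor N/p\rfloor+1$, and after summing the ``$+1$'' contributes $N^{2k-1}$, which for $k\ge 2$ and $N$ large exceeds $(2k)^kN^k$ by a factor $N^{k-1}/(2k)^k$. You cannot charge the ``$+1$'' to the diagonal either, since for a typical choice of the first $2k-1$ variables the unique admissible residue class contains a prime $x_{2k}\le N$ with $m\ne 0$. The inequality you would need, $N^{2k-1}\le(2k)^k(N^{3k-1}/p+N^k)$, is simply false when $(2k)^kN^k<p<2kN^{2k-1}$ and $N>2k$, and Brun--Titchmarsh cannot rescue it since in that range $p>N$ and there is at most one prime per residue class anyway. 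The ``finer structure'' you defer to is precisely the lattice argument in the proof of Theorem~\ref{thm:kI*=kI* I=[1,N]}: one writes $J_{2k}=\sum_\lambda|J(\lambda)|^2$, sends $k$-tuples with $\sum x_i^*\equiv\lambda$ to lattice points $(x_1\cdots x_k,\sum_j\prod_{i\ne j}x_i)\in\Gamma_\lambda\cap D$, and invokes Corollary~\ref{cor:latpoints} on successive minima. When $\mu_2\le 1$ this gives the uniform bound $|\Gamma_\lambda\cap D|\le 30kN^{2k-1}/p$ with no additive slop, after which Cauchy--Schwarz and the prime diagonal count produce the $N^{2k-1}/p$ term; when $\mu_2>1$ the lattice points in $D$ are collinear and the count collapses to the rational diagonal case. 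That geometry-of-numbers dichotomy is not recoverable from a per-variable residue-class count, so your argument as written does not close.
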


\subsection{Incomplete multilinear Kloosterman sums}

Below we use the abbreviation $e_p(z)=e^{2\pi i z/p}$. The incomplete Kloosterman sums
$$
\sum_{x=M+1}^{M+N}e_p(ax^*+bx),
$$
where $a$ and $b$ are integers, $\gcd(a,p)=1$, are well known in the literature, with a variety of applications. These sums
 are estimated by $O(p^{1/2}\log p)$ as a consequence of Weil bounds. For $M=0$ and $N$ very small (that is, $N=p^{o(1)}$) these sums have been estimated by Korolev~\cite{Kor}.

The incomplete bilinear Kloosterman sums
$$
S=\sum_{x_1=M_1+1}^{M_1+N_1}\,\sum_{x_2=M_2+1}^{M_2+N_2}\alpha_1(x_1)\alpha_2(x_2)e_p(ax_1^*x_2^*),
$$
where $\alpha_i(x_i)\in\C,\, |\alpha_i(x_i)|\le 1$, are also well known in the literature. Observe, that if one of the parameters $N_i$ is much larger than $p^{1/2}$, then $S$ can easily be estimated. For instance, if, say, $N_1^{1-c}>p^{1/2}$ for some $c>0$, one can use the Weil bound and get
\begin{equation*}
\begin{split}
|S|^2\le &N_1\sum_{x_1=M_1+1}^{M_1+N_1}\Bigl|\sum_{x_2=M_2+1}^{M_2+N_2}\alpha_2(x_2)e_p(ax_1^*x_2^*)\Bigr|^2\le\\
&N_1\sum_{y=M_2+1}^{M_2+N_2}\,\sum_{z=M_2+1}^{M_2+N_2}\Bigl|\sum_{x_1=M_1+1}^{M_1+N_1}e_p(ax_1^*(y^*-z^*)\Bigr|\ll
N_1^2N_2+N_1N_2^2\sqrt{p}(\log p),
\end{split}
\end{equation*}
which implies that
$$
|S|<\Bigl(N_2^{-1/2}+N_1^{-c/2}\Bigr)(N_1N_2)^{1+o(1)}.
$$
Thus, the most nontrivial case is $N_i<p^{1/2}$. When $M_1=M_2=0$ the sum $S$ (in a more general form in fact) has been estimated by Karatsuba~\cite{Kar1, Kar2} for very short ranges of $N_1$ and $N_2$, and by Bourgain~\cite{B1} for arbitrary $M_1,M_2$ provided that $N_1N_2>p^{1/2+\varepsilon}$. A full explicit version of Bourgain's result has been given by Baker~\cite{Bak}.

The incomplete $n$-linear Kloosterman sums
$$
\sum_{x_1=M_1+1}^{M_1+N_1}\ldots\sum_{x_n=M_n+1}^{M_n+N_n}e_p(a_1x_1+\ldots+a_nx_n+a_{n+1}(x_1\ldots x_n)^{*}),
$$
where $a_i\in\Z,\, \gcd(a_{n+1},p)=1,$ have been studied  by Luo~\cite{Luo} and Shparlinski~\cite{Shp1} for arbitrary $n$.
The main tool they used are the bounds of Burgess~\cite{Burgess} on incomplete Gauss sums.

Here, we combine our Theorems~\ref{thm:kI*},~\ref{thm:3I*=3I* small |I|},~\ref{thm:kI*=kI* small |I|},~\ref{thm:kI*=kI* I=[1,N]} with the multilinear exponential sum bounds from~\cite{B2} (see Lemma~\ref{lem:B2} below) and obtain new estimates on Kloosterman sums. In what follows, $\alpha_1(x_1),\ldots,\alpha_n(x_n)$ are arbitrary complex numbers with $|\alpha_i(x_i)|\le 1$.

\begin{theorem}
\label{thm:Kloost double 18/37} For any intervals $I_1, I_2$ with
$$
|I_1|>p^{1/18},\qquad |I_2|>p^{5/12+\varepsilon}
$$
we have
$$
\max_{(a,p)=1}\Bigr|\sum_{x_1\in I_1}\sum_{x_2\in I_2}\alpha_1(x_1) \alpha_2(x_2)e_p(ax_1^*x_2^*)\Bigl|<p^{-\delta}|I_1||I_2|
$$
for some $\delta=\delta(\varepsilon)>0$.
\end{theorem}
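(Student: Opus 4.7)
After substituting $y_i = x_i^{-1}$, the sum $S$ becomes a bilinear exponential sum
\begin{equation*}
S = \sum_{y_1\in I_1^{-1}}\sum_{y_2\in I_2^{-1}}\beta_1(y_1)\beta_2(y_2)\, e_p(a y_1 y_2),
\end{equation*}
over the reciprocal sets $I_i^{-1}$, with $\beta_i(y)=\alpha_i(y^{-1})$ and $|\beta_i(y)|\le 1$. Because $|I_1||I_2| < p^{17/36+\varepsilon}\ll p^{1/2}$, neither Weil plus Cauchy--Schwarz nor the bilinear method of~\cite{B1} (which requires the product of the side lengths to exceed $p^{1/2+\varepsilon}$) will work, and the saving has to come from the additive structure of $I_1^{-1}$ and $I_2^{-1}$ established earlier in the paper.

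I would first apply H\"older's inequality in $y_2$ with exponent $2k$, for a parameter $k$ to be chosen:
\begin{equation*}
|S|^{2k}\le |I_2|^{2k-1}\sum_{y_2\in I_2^{-1}}\Bigl|\sum_{y_1\in I_1^{-1}}\beta_1(y_1)\, e_p(a y_1 y_2)\Bigr|^{2k}.
\end{equation*}
Expanding the $2k$-th power and grouping by the value of the frequency $\Lambda = y_1^{(1)}+\ldots+y_1^{(k)}-y_1^{(k+1)}-\ldots-y_1^{(2k)}$, the right-hand side splits into a diagonal contribution ($\Lambda=0$) bounded by $J_{2k}(I_1)\cdot|I_2|\le |I_1|^{2k^2/(k+1)+o(1)}|I_2|$ via Theorem~\ref{thm:kI*}, and an off-diagonal contribution
\begin{equation*}
\sum_{\Lambda\ne 0}r(\Lambda)\Bigl|\sum_{y_2\in I_2^{-1}}e_p(a\Lambda y_2)\Bigr|,
\end{equation*}
where $r(\Lambda)$ counts representations of $\Lambda$ in the signed $2k$-fold sumset of $I_1^{-1}$.

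The inner Kloosterman-type sum over $y_2$ has length $|I_2|<p^{1/2}$, so Weil's bound is unavailable. I would instead invoke the multilinear exponential sum estimate from~\cite{B2} (Lemma~\ref{lem:B2}), whose hypotheses are $L^{2k}$-moment bounds on the sets $I_1^{-1}$ and $I_2^{-1}$ (controlling respectively the second moment of $r$ and the additive energy of $I_2^{-1}$), both of which are delivered by Theorems~\ref{thm:kI*}, \ref{thm:3I*=3I* small |I|}, \ref{thm:kI*=kI* small |I|}. The lemma then yields a saving of $p^{-\delta}$ under an additional weighted size condition on $|I_1|,|I_2|$; the thresholds $|I_1|>p^{1/18}$, $|I_2|>p^{5/12+\varepsilon}$ should be chosen precisely so that this condition is met for some $k = O_\varepsilon(1)$.

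The hard part will be calibrating $k$ so that simultaneously the diagonal contribution beats the target $(|I_1||I_2|)^{2k}p^{-2k\delta}$---easy once $|I_1|>p^{1/18}$---and the off-diagonal sum falls within the applicability range of Lemma~\ref{lem:B2}, which is where $|I_2|>p^{5/12+\varepsilon}$ enters. Pushing the exponent for $|I_2|$ from the Weil barrier $1/2$ down to $5/12$ should come from the multilinear machinery of~\cite{B2}, fed with the new reciprocal-set moment bounds of the present paper.
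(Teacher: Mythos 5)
Your reduction to a bilinear sum over $I_1^{-1}$ and $I_2^{-1}$ is correct, and the diagonal estimate via Theorem~\ref{thm:kI*} is fine, but the treatment of the off-diagonal via Lemma~\ref{lem:B2} cannot work here, and this is not a technicality. In the $n=2$ case of Lemma~\ref{lem:B2} the hypothesis is $\|\gamma_1\|_2\|\gamma_2\|_2<p^{-1/2-\delta}$. In your setup $\gamma_2$ is (essentially) $|I_2|^{-1}\mathbf{1}_{I_2^{-1}}$, so $\|\gamma_2\|_2=|I_2|^{-1/2}$, and $\gamma_1=r(\Lambda)/|I_1|^{2k}$ with $\sum_\Lambda r(\Lambda)^2=J_{4k}(I_1)\ge|I_1|^{4k-1}$ (already the diagonal of the $4k$-fold energy), hence $\|\gamma_1\|_2\ge|I_1|^{-1/2}$. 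Thus $\|\gamma_1\|_2\|\gamma_2\|_2\ge(|I_1||I_2|)^{-1/2}$, and since the hypotheses give $|I_1||I_2|\approx p^{17/36+\varepsilon}<p^{1/2}$, the required inequality fails by a fixed power of $p$. No choice of $k$ fixes this; the obstruction is simply that the combined mass of the two weights is too small for the bilinear entropy condition of~\cite{B2}. This is precisely why the bilinear Theorems~\ref{thm:Kloost double 18/37}--\ref{thm:Th1GenBil} in the paper are proved without Lemma~\ref{lem:B2}, which is reserved for the genuinely multilinear Theorems~\ref{thm:Kloost 1/3}--\ref{thm:Kloost 1/2} where the entropy condition can be met.

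The paper's actual argument is elementary and \emph{asymmetric} in the two variables. After the first H\"older (with exponent $k=[1/\varepsilon]$, applied only to the $I_2$-variable), one applies H\"older again with exponent $3$ to the $I_1$-variable and then Cauchy--Schwarz on the resulting $\lambda$-sum, arriving at
$|W_2|^{6k}\le|I_1|^{6k-6}|I_2|^{4k}\bigl(\sum_\lambda T(\lambda)^2\bigr)\bigl(\sum_\lambda|\sum_{x_1\in I_1}\theta(x_1)e_p(ax_1^*\lambda)|^6\bigr)$,
where $T(\lambda)$ counts $k$-fold representations from $I_2^{-1}$. The first factor is $J_{2k}(I_2)<|I_2|^{2k-2+2/(k+1)+o(1)}$ by Theorem~\ref{thm:kI*}, the second is $\le pJ_6(I_1)$ by completing the sum (orthogonality), and $J_6(I_1)<|I_1|^{3+o(1)}$ by Theorem~\ref{thm:3I*=3I* small |I|} since $|I_1|<p^{1/18}$. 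The factor $p$ introduced by Parseval is then absorbed by $|I_1|^3|I_2|^2\approx p^{1+2\varepsilon}>p$, which is exactly where the threshold $|I_2|>p^{5/12+\varepsilon}$ comes from. Your symmetric $2k$-power on the $I_2$-side misses both of the two key asymmetries (the sharp sixth-moment bound is only available for the short interval $I_1$, and the $p$-factor should be paired with a sixth power rather than a $2k$-th power), so the route you sketched would need a substantial redesign to close.
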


Note that $|I_1||I_2|=p^{1/2-1/36+\varepsilon}$.

\begin{rem}
The statement of Theorem~\ref{thm:Kloost double 18/37}
remains true for the general sum
$$
\sum_{x_1=1}^{N_1}\sum_{x_2=1}^{N_2}\alpha_1(x_1) \alpha_2(x_2)e_p(ax_1^*x_2^*+bx_1x_2).
$$
This can be achieved incorporating~\cite[Lemma A.8]{B1}.
\end{rem}

When $M_1=M_2=0$, we prove the following result, expanding the range of applicability of Karatsuba's
estimate~\cite{Kar1}.

\begin{theorem}
\label{thm:Kloost Karatsuba range}
Let $I_1=[1, N_1],\, I_2=[1,N_2]$. Then uniformly over all positive integers $k_1,k_2$ and $\gcd(a,p)=1$ we have
\begin{equation*}
\begin{split}
\Bigl|\sum_{x_1\in I_1} \sum_{x_2\in I_2}\alpha_1(x_1) &\alpha_2(x_2)e_p(ax_1^*x_2^*)\Bigr|
< (2k_1)^{\frac{45k_1^2}{k_2}}(2k_2)^{\frac{45k_2^2}{k_1}}(\log p)^{2(\frac{k_1}{k_2}+\frac{k_2}{k_1})}\times \\
\times &\Bigl(\frac{N_1^{k_1-1}}{p^{1/2}}+\frac{p^{1/2}}{N^{k_1}}\Bigr)^{1/(2k_1k_2)}
\Bigl(\frac{N_2^{k_2-1}}{p^{1/2}}+\frac{p^{1/2}}{N^{k_2}}\Bigr)^{1/(2k_1k_2)}N_1N_2.
\end{split}
\end{equation*}
\end{theorem}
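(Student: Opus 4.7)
The proof I would attempt follows the Karatsuba double-Hölder method, coupled with the energy estimate from Theorem~\ref{thm:kI*=kI* I=[1,N]} and the Weil-type bound on the linear Kloosterman sum. Write $T_2(\mu)=\sum_{x_2\in I_2}\alpha_2(x_2)e_p(\mu x_2^{-1})$ and $\tilde T_1(\nu)=\sum_{x_1\in I_1}e_p(\nu x_1^{-1})$. The first step is Hölder's inequality in $x_1$ with exponent $2k_2$:
\[
|S|^{2k_2}\le N_1^{2k_2-1}\sum_{x_1\in I_1}|T_2(ax_1^{-1})|^{2k_2}.
\]
Expanding $|T_2|^{2k_2}$ as a sum over $\vec y\in I_2^{2k_2}$ and interchanging summations, the inner sum becomes $\sum_{\vec y}\bigl(\prod\alpha_2\bar\alpha_2\bigr)\tilde T_1(aY(\vec y))$, where $Y(\vec y)=y_1^{-1}+\dots+y_{k_2}^{-1}-y_{k_2+1}^{-1}-\dots-y_{2k_2}^{-1}$. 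The diagonal $Y(\vec y)=0$ contributes at most $J_{2k_2,I_2}\cdot N_1$.

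For the off-diagonal part I would apply Hölder a second time, now in $\vec y$ with exponent $2k_1$, producing $|\tilde T_1(aY)|^{2k_1}$, which in turn unfolds as $\sum_{\vec x\in I_1^{2k_1}}e_p(aY X(\vec x))$. Exchanging $\vec x$ and $\vec y$ summations rewrites the off-diagonal contribution in terms of the symmetric bilinear form
\[
\sum_{X,Y\in\F_p}N_{2k_1,I_1}(X)\,N_{2k_2,I_2}(Y)\,e_p(aXY),
\]
where $N_{2k_j,I_j}(Z)$ counts the representations of $Z$ as $\sum_{i\le k_j}z_i^{-1}-\sum_{i>k_j}z_i^{-1}$ with $z_i\in I_j$. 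I would then separate the $X=0$ diagonal (contributing $J_{2k_1,I_1}\cdot N_2^{2k_2}$) and bound the remaining $X,Y\ne0$ piece using the Weil-type estimate $|\tilde T_1(aY)|\le C\sqrt{p}\log p$ from Theorem~\ref{thm:linear sqrt log p}. Substituting the bound
\[
J_{2k_j,I_j}<(2k_j)^{90k_j^3}(\log N_j)^{4k_j^2}\Bigl(\frac{N_j^{2k_j-1}}{p}+1\Bigr)N_j^{k_j}
\]
from Theorem~\ref{thm:kI*=kI* I=[1,N]} into the diagonal terms, and then taking the $(2k_1k_2)$-th root, produces the claimed inequality. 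The two transition terms $N_j^{k_j-1}/p^{1/2}$ and $p^{1/2}/N_j^{k_j}$ originate from the two regimes of $J_{2k_j,I_j}$, and the constants $(2k_j)^{45k_j^2/k_{3-j}}$ and $(\log p)^{2k_j/k_{3-j}}$ come from distributing the factors $(2k_j)^{90k_j^3}$ and $(\log N_j)^{4k_j^2}$ across the root.

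The main obstacle is the off-diagonal bilinear estimate. A naive Plancherel/Cauchy--Schwarz bound gives $\sqrt{pJ_{4k_1,I_1}J_{4k_2,I_2}}$, which introduces the higher-moment energies $J_{4k_j,I_j}$ and would produce the weaker exponent $1/(4k_1k_2)$ in the final estimate rather than the claimed $1/(2k_1k_2)$. To land on $J_{2k_j,I_j}$ one must treat the linear Kloosterman sum $\tilde T_1(aY)$ individually for each $Y\ne0$ via Weil, combining it with the moment identity $\sum_{\mu}|\tilde T_1(\mu)|^{2k_1}=pJ_{2k_1,I_1}$, and balance the two diagonal contributions $J_{2k_1,I_1}N_2^{2k_2}$ and $J_{2k_2,I_2}N_1^{2k_1}$ symmetrically. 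Keeping track of the polynomial-in-$k_j$ and logarithmic factors is also delicate, and it is precisely the symmetric appearance of the two indices $k_1,k_2$ in the bound's constants that forces the factors $(2k_j)^{45k_j^2/k_{3-j}}$ and $(\log p)^{2k_j/k_{3-j}}$ rather than any simpler expression.
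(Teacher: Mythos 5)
Your overall strategy (two applications of H\"older, then feed the energy from Theorem~\ref{thm:kI*=kI* I=[1,N]}) is the right one, but the version you set up does not close, and the workaround you sketch does not repair it. The culprit is your choice of H\"older exponents. You raise to the $2k_2$-th power in $x_1$ and then to the $2k_1$-th power in $\vec y$, which, as you correctly observe, funnels everything into the higher moments $J_{4k_1,I_1}$, $J_{4k_2,I_2}$ and the exponent $1/(4k_1k_2)$, weaker than claimed. Your proposed fix --- ``treat $\tilde T_1(aY)$ individually for each $Y\neq 0$ via Weil'' --- fails: bounding $|\tilde T_1(aY)|\le C\sqrt{p}\log p$ pointwise and substituting into $\sum_{Y}N_{2k_2,I_2}(Y)|\tilde T_1(aY)|^{2k_1}$ produces a loss of $p^{k_1}(\log p)^{2k_1}$, which is far larger than the $p$ one needs (and is in fact strictly worse than the naive Plancherel argument you are trying to beat). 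A pointwise Weil estimate cannot convert $J_{4k}$-type quantities into $J_{2k}$-type ones. Also, the bound you quote as a ``Weil-type estimate'' is not Theorem~\ref{thm:linear sqrt log p}; that theorem is the delicate $\log$-power saving for short sums and has nothing to do with the $O(\sqrt{p}\log p)$ Weil bound, which is just stated in the introduction.

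The paper's actual proof avoids the issue entirely by using \emph{odd} H\"older exponents $k_2$ and $k_1$ rather than $2k_2$ and $2k_1$, followed by a single Cauchy--Schwarz. Concretely: H\"older gives
$$
|S|^{k_1k_2}\le N_1^{k_1(k_2-1)}N_2^{k_2(k_1-1)}\sum_{\lambda=0}^{p-1}J_{k_2}(\lambda;N_2)\Bigl|\sum_{x_1\in I_1}\sigma(x_1)e_p(ax_1^{*}\lambda)\Bigr|^{k_1},
$$
where $J_{k_2}(\lambda;N_2)$ counts the $(y_1,\dots,y_{k_2})\in[1,N_2]^{k_2}$ with $y_1^{*}+\dots+y_{k_2}^{*}\equiv\lambda$ and $\sigma(x_1)$ are unimodular. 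Cauchy--Schwarz in $\lambda$ then yields
$$
|S|^{2k_1k_2}\le N_1^{2k_1(k_2-1)}N_2^{2k_2(k_1-1)}\Bigl(\sum_\lambda J_{k_2}(\lambda;N_2)^2\Bigr)\Bigl(\sum_\lambda\Bigl|\sum_{x_1}\sigma(x_1)e_p(ax_1^{*}\lambda)\Bigr|^{2k_1}\Bigr),
$$
and the crucial point is that $\sum_\lambda J_{k_2}(\lambda;N_2)^2=J_{2k_2}(N_2)$ \emph{exactly}, while $\sum_\lambda|\sum_{x_1}\sigma(x_1)e_p(ax_1^{*}\lambda)|^{2k_1}\le p\,J_{2k_1}(N_1)$ by orthogonality. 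No diagonal/off-diagonal split, no Weil. Feeding Theorem~\ref{thm:kI*=kI* I=[1,N]} into $J_{2k_1}(N_1)J_{2k_2}(N_2)$ and taking the $(2k_1k_2)$-th root gives exactly the stated constants (your accounting of how $(2k_j)^{90k_j^3}$ and $(\log N_j)^{4k_j^2}$ redistribute into $(2k_j)^{45k_j^2/k_{3-j}}$ and $(\log p)^{2k_j/k_{3-j}}$ is correct).
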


Given $N_1,N_2$ we choose $k_1,k_2$ such that
$$
N_1^{2(k_1-1)}<p\le N_1^{2k_1},\qquad N_2^{2(k_2-1)}<p\le N_2^{2k_2}
$$
and the bound will be nontrivial unless both $N_1,N_2$ are within $p^{\varepsilon}$-ratio of an element of $\{p^{\frac{1}{2l}},\l\in \Z_{+}\}$.
Thus, we have the following
\begin{cor}
\label{cor:Kloost Karatsuba rang}
Let $I_1=[1, N_1],\, I_2=[1,N_2]$, where for $i=1$ or $i=2$
$$
N_i\not\in \mathop{\bigcup}_{j\ge 1} \, [p^{\frac{1}{2j}-\varepsilon},\, p^{\frac{1}{2j}+\varepsilon}].
$$
Then
$$
\max_{(a,p)=1}\Bigl|\sum_{x_1=1}^{N_1}\sum_{x_2=1}^{N_2}\alpha_1(x_1) \alpha_2(x_2)e_p(ax_1^*x_2^*)\Bigr|<
 p^{-\delta} N_1N_2
$$
for some $\delta=\delta(\varepsilon)>0$.
\end{cor}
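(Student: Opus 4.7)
The plan is to apply Theorem~\ref{thm:Kloost Karatsuba range} with the balancing choice
\[
k_i = \left\lceil \frac{\log p}{2\log N_i} \right\rceil, \qquad i=1,2,
\]
indicated just before the statement of the corollary, so that $N_i^{2(k_i-1)}<p\le N_i^{2k_i}$. Writing $N_i=p^{\alpha_i}$, the two fractions appearing in the theorem take the form
\[
\frac{N_i^{k_i-1}}{p^{1/2}}=p^{(k_i-1)\alpha_i-1/2},\qquad \frac{p^{1/2}}{N_i^{k_i}}=p^{1/2-k_i\alpha_i},
\]
and both exponents are non-positive by the defining property of $k_i$.

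First I would extract a uniform bound $k_i\le C(\varepsilon)$ from the hypothesis. Since the exceptional intervals $[p^{\frac{1}{2j}-\varepsilon},p^{\frac{1}{2j}+\varepsilon}]$ become denser as $j$ grows and begin to overlap once $j\gtrsim\varepsilon^{-1/2}$ (because the gap $\tfrac{1}{2j(j+1)}$ between consecutive centres drops below $2\varepsilon$), their union covers an entire neighbourhood of $1$; hence the exclusion forces $\alpha_i\ge c_0(\varepsilon)>0$. For the bookkeeping below one in fact needs this for both $i=1,2$, so I would read the hypothesis in that stronger form. Once $k_1,k_2$ are bounded in terms of $\varepsilon$, the prefactor $(2k_1)^{45k_1^2/k_2}(2k_2)^{45k_2^2/k_1}(\log p)^{2(k_1/k_2+k_2/k_1)}$ from Theorem~\ref{thm:Kloost Karatsuba range} is at most $(\log p)^{O_\varepsilon(1)}=p^{o(1)}$.

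The core step is the gap estimate. For the index $i$ to which the hypothesis is applied, $\alpha_i$ lies in the open interval $(\tfrac{1}{2k_i},\tfrac{1}{2(k_i-1)})$ at distance at least $\varepsilon$ from either endpoint, so both displayed fractions are at most $p^{-\varepsilon}$; raising to the $1/(2k_1k_2)$ power gives a saving of $p^{-\varepsilon/(2k_1k_2)}=p^{-\delta(\varepsilon)}$. The parenthetical factor for the other index is trivially at most $2^{1/(2k_1k_2)}\le 2$, and absorbing this constant together with the $p^{o(1)}$ prefactor into a slightly smaller $\delta$ yields the claim. The only substantive point is the geometric observation above---the density of the reciprocals $1/(2j)$ forces the exceptional set to contain a neighbourhood of $1$---which simultaneously bounds $k_i$ and separates $\alpha_i$ from every $\tfrac{1}{2j}$; everything else is routine substitution into Theorem~\ref{thm:Kloost Karatsuba range}.
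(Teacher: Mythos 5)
Your proof is correct and it follows what is evidently the intended (unwritten) argument of the paper: choose $k_i$ with $N_i^{2(k_i-1)}<p\le N_i^{2k_i}$, note that the exclusion hypothesis pushes $\alpha_i=\log N_i/\log p$ strictly $\varepsilon$-away from both $\tfrac{1}{2k_i}$ and $\tfrac{1}{2(k_i-1)}$ so that both bracketed quotients in Theorem~\ref{thm:Kloost Karatsuba range} are $\le p^{-\varepsilon}$, and absorb the $(2k_1)^{45k_1^2/k_2}(2k_2)^{45k_2^2/k_1}(\log p)^{O(1)}$ prefactor once $k_1,k_2$ are bounded in terms of $\varepsilon$. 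Your observation that the exceptional intervals $[p^{\frac{1}{2j}-\varepsilon},p^{\frac{1}{2j}+\varepsilon}]$ overlap once $j\gtrsim\varepsilon^{-1/2}$, hence cover a full neighbourhood of $1$ and force $\alpha_i\ge c_0(\varepsilon)$, is exactly the point that bounds $k_i\le C(\varepsilon)$.

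You are also right to flag the ``for $i=1$ or $i=2$'' in the statement: as written, it would permit the other index, say $N_2$, to be as small as $1$, in which case $k_2$ is unbounded, the prefactor $(2k_2)^{45k_2^2/k_1}(\log p)^{2k_2/k_1}$ blows up, and the $p^{-\varepsilon/(2k_1k_2)}$ saving evaporates, so Theorem~\ref{thm:Kloost Karatsuba range} gives nothing (and indeed for $N_2=1$ the conclusion is simply false, since one may take $\alpha_1(x_1)=e_p(-ax_1^*)$). The argument genuinely requires $k_1$ and $k_2$ to both be $O_\varepsilon(1)$, hence the exclusion must be read as applying to both $N_1$ and $N_2$; this is consistent with the sentence preceding the corollary (``the bound will be nontrivial unless \emph{both} $N_1,N_2$ are within a $p^\varepsilon$-ratio $\ldots$''), but the corollary's phrasing does not quite capture it. One small stylistic nit: the separation of $\alpha_i$ from each $\tfrac{1}{2j}$ is the hypothesis itself, not a consequence of the overlap observation; the overlap observation is only needed to bound $k_i$. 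Everything else is a routine substitution into Theorem~\ref{thm:Kloost Karatsuba range} and matches the paper.
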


\begin{theorem}
\label{thm: Conseq Cor 3}
Let $I_1, I_2\subset \F_p$ be intervals of sizes $N_1,N_2$ in arbitrary position. Then
\begin{equation*}
\begin{split}
\max_{(a,p)=1}\Bigl|\sum_{x_1\in I_1}\sum_{x_2\in I_2}\alpha_1(x_1) \alpha_2(x_2)&e_p(ax_1^*x_2^*)\Bigr|\ll\\
\ll &p^{1/8} N_1^{3/4}N_2^{3/4}\Bigl(\frac{N_1^3}{p}+1\Bigr)^{1/16}\Bigl(\frac{N_2^3}{p}+1\Bigr)^{1/16}.
\end{split}
\end{equation*}
\end{theorem}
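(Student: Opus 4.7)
The plan is to view the sum $S$ as a bilinear exponential sum over the reciprocal sets $A:=I_1^{-1}$ and $B:=I_2^{-1}$, apply the bilinear case of the multilinear exponential sum estimate from~\cite{B2} (Lemma~\ref{lem:B2} below), and then extract the explicit dependence on $N_1,N_2,p$ via the sumset lower bound~\eqref{eqn:CillGar I*+I*} from~\cite{CillGar}.

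First I would substitute $u=x_1^{-1}$, $v=x_2^{-1}$ to rewrite
\[
S=\sum_{u\in A}\sum_{v\in B}\beta_1(u)\beta_2(v)\,e_p(auv),\qquad \beta_i(u):=\alpha_i(u^{-1}),\ |\beta_i|\le 1,\ |A|=N_1,\ |B|=N_2.
\]
Since $I_i^{-1}$ carries nontrivial additive structure, Lemma~\ref{lem:B2} applied to this bilinear sum produces an estimate of the shape
\[
|S|\ll p^{1/8}\,\frac{|A||B|}{\bigl(|A+A|\,|B+B|\bigr)^{1/8}}
=p^{1/8}\,\frac{N_1N_2}{\bigl(|I_1^{-1}+I_1^{-1}|\,|I_2^{-1}+I_2^{-1}|\bigr)^{1/8}}.
\]
If Lemma~\ref{lem:B2} is phrased in terms of additive energies rather than sumsets, the same outcome follows by combining it with the Cauchy--Schwarz lower bound $|A+A|\ge|A|^4/E_+(A)$ and Theorem~\ref{thm:kI*} with $k=2$ (which bounds $E_+(I^{-1})=J_4(I)$).

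Next I substitute~\eqref{eqn:CillGar I*+I*}, which yields $|I_i^{-1}+I_i^{-1}|\gg\min(N_i^2,\sqrt{pN_i})\,N_i^{-o(1)}$. The identity
\[
\min(N^2,\sqrt{pN})=\frac{N^2}{\sqrt{\max(1,\,N^3/p)}},
\]
combined with the elementary $\max(1,x)\le 1+x$, gives
\[
\min(N^2,\sqrt{pN})^{-1/8}\le N^{-1/4}\bigl(1+N^3/p\bigr)^{1/16}.
\]
Applying this estimate to both factors $|I_i^{-1}+I_i^{-1}|^{-1/8}$ and collecting powers yields exactly $p^{1/8}N_1^{3/4}N_2^{3/4}(N_1^3/p+1)^{1/16}(N_2^3/p+1)^{1/16}$, with the $(N_1N_2)^{o(1)}$ slack from~\eqref{eqn:CillGar I*+I*} absorbed into the implied constant of $\ll$.

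The main obstacle is invoking Lemma~\ref{lem:B2} in the form that produces exponent $1/8$ on both the $p$-factor and the doubling: only this specific scaling causes the dichotomy at $N_i\sim p^{1/3}$ in~\eqref{eqn:CillGar I*+I*} to collapse cleanly into the symmetric correction $(N_i^3/p+1)^{1/16}$. Once the bilinear estimate is set up correctly, the remainder is a direct substitution requiring no case analysis; in particular, the sanity check $N_1=N_2=p^{1/2}$ gives $|I_i^{-1}+I_i^{-1}|\gg p^{3/4}$ and the computation returns $|S|\ll p^{15/16}$, matching the claimed bound.
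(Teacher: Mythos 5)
Your algebraic manipulations in the final paragraph are correct, and the logical skeleton of the argument (pass to $A=I_1^{-1}$, $B=I_2^{-1}$, prove a bilinear estimate expressible through the additive structure of $A$ and $B$, then feed in the Cilleruelo--Garaev concentration bound) is exactly what the paper does. The gap is in the middle step: the bilinear estimate
\[
|S|\ll p^{1/8}\,\frac{|A||B|}{\bigl(|A+A|\,|B+B|\bigr)^{1/8}}
\]
is \emph{not} a consequence of Lemma~\ref{lem:B2}, and invoking that lemma here is a category error. Lemma~\ref{lem:B2} is a soft, non-quantitative result: under $L^1$- and $L^2$-smallness hypotheses on non-negative weights it returns a saving $p^{-\delta'}$ with $\delta'>(\delta/n)^{Cn}$, i.e.\ an unspecified and in practice extremely small power of $p$. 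It makes no mention of sumsets or additive energies, requires non-negative weights (your $\alpha_i$ are complex), and cannot produce an explicit exponent such as $1/8$. It is the right tool in Theorems~\ref{thm:Kloost 1/3}, \ref{thm:Kloost epsilon}, \ref{thm:Kloost 1/2}, where only some positive $\delta$ is wanted, but not here.

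The bound you wrote down is nonetheless true, and the paper proves it by an elementary route. Following the proof of Theorem~\ref{thm:Kloost Karatsuba range}: two applications of H\"older's inequality (first in $x_2$, then in $x_1$) followed by Cauchy--Schwarz and Parseval yield the display~\eqref{eqn:th91011},
\[
|S|^{2k_1k_2}\le p\,N_1^{2k_1k_2-2k_1}N_2^{2k_1k_2-2k_2}\,J_{2k_1}(N_1)\,J_{2k_2}(N_2),
\]
which for $k_1=k_2=2$ reads $|S|^{8}\le p\,N_1^4N_2^4\,J_4(N_1)J_4(N_2)$ where $J_4(N_i)=E_+(I_i^{-1})$. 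Taking eighth roots gives precisely $|S|\le p^{1/8}(N_1N_2)^{1/2}\bigl(E_+(A)E_+(B)\bigr)^{1/8}$, which is your bound after the Cauchy--Schwarz passage $|A+A|\ge |A|^4/E_+(A)$. The paper then substitutes the energy estimate $J_4(N_i)\le N_i^{2+o(1)}(N_i^{3/2}p^{-1/2}+1)$ coming from Corollary~\ref{cor:CillGar}, which is the same input as your~\eqref{eqn:CillGar I*+I*}; your computation of the resulting exponents, including the reformulation $\min(N^2,\sqrt{pN})^{-1/8}\le N^{-1/4}(1+N^3/p)^{1/16}$ and the sanity check at $N_i=p^{1/2}$, is correct. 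So the fix is local: replace the appeal to Lemma~\ref{lem:B2} with the H\"older/Cauchy--Schwarz derivation of~\eqref{eqn:th91011} with $k_1=k_2=2$, and the rest of your argument stands.
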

Some relevant to Theorem~\ref{thm: Conseq Cor 3} results with intervals starting from the origin can be found in~\cite{Bak},~\cite{FouSh},~\cite{Gar}.

\begin{theorem}
\label{thm:Th1GenBil}
Let $k_1,k_2$ be positive integer constants,  $I_1, I_2\subset \F_p$ be intervals of sizes $N_1,N_2$ in arbitrary position and
$$
N_1<p^{\frac{k_1+1}{2k_1}},\qquad N_2<p^{\frac{k_2+1}{2k_2}}.
$$
Then
\begin{equation*}
\begin{split}
\max_{(a,p)=1}\Bigl|\sum_{x_1\in I_1}\sum_{x_2\in I_2}\alpha_1(x_1)& \alpha_2(x_2)e_p(ax_1^*x_2^*)\Bigr|<\\
&\Bigl(p^{\frac{1}{2k_1k_2}}N_1^{-\frac{1}{k_2(k_1+1)}}N_2^{-\frac{1}{k_1(k_2+1)}}\Bigr)(N_1N_2)^{1+o(1)}.
\end{split}
\end{equation*}
\end{theorem}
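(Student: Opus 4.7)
The plan is to view the Kloosterman sum as a bilinear exponential sum on the reciprocal sets $U_i := I_i^{-1}$ and to feed the additive energy bounds from Theorem~\ref{thm:kI*} into the multilinear exponential sum estimate of~\cite{B2} (our Lemma~\ref{lem:B2}). I would first change variables $y_i = x_i^{-1}$, setting $\widetilde\alpha_i(y_i) = \alpha_i(y_i^{-1})$; since $|U_i| = N_i$ and $|\widetilde\alpha_i| \le 1$, the sum becomes the pure bilinear form
$$
S = \sum_{y_1 \in U_1}\sum_{y_2 \in U_2} \widetilde\alpha_1(y_1)\,\widetilde\alpha_2(y_2)\, e_p(a\, y_1 y_2).
$$

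Next, the hypothesis $N_i < p^{(k_i+1)/(2k_i)}$ is precisely the threshold at which the term $|I_i|^{2k_i^2/(k_i+1)}$ dominates $|I_i|^{2k_i}/p$ in~\eqref{eqn:thm kI*}, so Theorem~\ref{thm:kI*} applied to each $I_i$ yields
$$
J_{2k_i}(U_i) < N_i^{2k_i^2/(k_i+1) + o(1)}, \qquad i = 1, 2.
$$
I would then feed both of these into Lemma~\ref{lem:B2} with $2k_1$ copies of the variable $y_1 \in U_1$ and $2k_2$ copies of $y_2 \in U_2$. The lemma is expected to output an estimate of the form
$$
|S| < (N_1 N_2)^{1+o(1)}\, p^{1/(2k_1 k_2)}\, J_{2k_1}(U_1)^{1/(2k_1 k_2)}\, J_{2k_2}(U_2)^{1/(2k_1 k_2)}\, N_1^{-1/k_2}\, N_2^{-1/k_1}.
$$
Substituting the energy bounds and simplifying $1 - 1/k_j + k_i/(k_j(k_i+1)) = 1 - 1/(k_j(k_i+1))$ then reproduces the claimed estimate.

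The principal difficulty lies in the multilinear step. A direct approach --- applying H\"older in one variable with exponent $2k_1$, expanding the resulting $|\sum e_p|^{2k_1}$, and bounding the Kloosterman sums $\sum_{x_1 \in I_1} e_p(a\tau x_1^{-1})$ by the Weil bound for $\tau \neq 0$ and by $N_1$ for $\tau = 0$ --- produces only asymmetric estimates of the shape $N_1\, N_2^{1 - 1/(2k_1(k_2+1))}$ plus a Weil-remainder, off from the target by a factor of two in the denominator of the exponent. To obtain the symmetric exponents $1 - 1/(k_j(k_i+1))$ one must exploit the $2k_1$-energy of $U_1$ and the $2k_2$-energy of $U_2$ \emph{simultaneously}, and Lemma~\ref{lem:B2} is precisely designed to do so.
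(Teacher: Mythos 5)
Your endgame is right but the middle step is misattributed, and the gap is real. You correctly identify the intermediate inequality
$$
|S|^{2k_1k_2}\le p\,N_1^{2k_1k_2-2k_1}N_2^{2k_1k_2-2k_2}\,J_{2k_1}(N_1)\,J_{2k_2}(N_2),
$$
and you correctly see that feeding the energy bound $J_{2k_i}<N_i^{2k_i^2/(k_i+1)+o(1)}$ from Theorem~\ref{thm:kI*} into it (valid because $N_i<p^{(k_i+1)/(2k_i)}$ makes the first term dominate) gives the claimed exponents. But Lemma~\ref{lem:B2} cannot produce this inequality. That lemma is a \emph{qualitative} multilinear estimate: under an entropy condition it yields a saving $p^{-\delta'}$ with $\delta'>(\delta/n)^{Cn}$, which is vastly weaker than the explicit rational exponents you need, and it is phrased as a black box with no output of the shape you wrote. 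It is used elsewhere in the paper (Theorems~\ref{thm:Kloost 1/3},~\ref{thm:Kloost epsilon},~\ref{thm:Kloost 1/2}) precisely where only an unspecified $p^{-\delta}$ saving is claimed.

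The missing idea is entirely elementary and does not require Weil at all. Apply H\"older in $x_1$ with exponent $k_2$ and open the $k_2$-th power, writing the resulting sum over $(y_1,\ldots,y_{k_2})\in I_2^{k_2}$ as a sum over $\lambda=y_1^*+\cdots+y_{k_2}^*$ weighted by the representation count $J_{k_2}(\lambda;N_2)$. Then apply H\"older again, this time in the $k_2$-tuple, with exponent $k_1$. This produces
$$
|S|^{k_1k_2}\le N_1^{k_1k_2-k_1}N_2^{k_1k_2-k_2}\sum_{\lambda=0}^{p-1}J_{k_2}(\lambda;N_2)\Bigl|\sum_{x_1\in I_1}\sigma(x_1)e_p(ax_1^*\lambda)\Bigr|^{k_1}.
$$
Finally, apply Cauchy--Schwarz to the $\lambda$-sum and use the two identities $\sum_\lambda J_{k_2}(\lambda;N_2)^2=J_{2k_2}(N_2)$ and, by orthogonality (Parseval over $\F_p$), $\sum_\lambda|\sum_{x_1\in I_1}\sigma(x_1)e_p(ax_1^*\lambda)|^{2k_1}\le pJ_{2k_1}(N_1)$. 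The Weil-bound route you experimented with and found asymmetric is the wrong detour --- the symmetric exponents come from this double-H\"older--plus--Parseval structure, not from any point-count of the dual sum. With that step repaired, your write-up coincides with the paper's proof.
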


We next consider multilinear Kloosterman sums.

\begin{theorem}
\label{thm:Kloost 1/3} Let  $n\ge 7$ and
$N^n>p^{1/3+\varepsilon}$. Then for any intervals $I_1,\ldots, I_n$ of length $N$  we have
$$
\max_{(a,p)=1}\Bigr|\sum_{x_1\in I_1}\ldots\sum_{x_n\in I_n}\alpha_1(x_1)\ldots \alpha_n(x_n)e_p(ax_1^*\ldots x_n^*)\Bigl|<p^{-\delta}N^n
$$
for some $\delta=\delta(\varepsilon,n)>0$.
\end{theorem}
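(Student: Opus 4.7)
The plan is to transfer the Kloosterman sum to a multilinear product-type sum and then apply the multilinear exponential-sum estimate from \cite{B2} (Lemma~\ref{lem:B2}), using the additive-energy information on $I^{-1}$ provided by Theorem~\ref{thm:kI*}. Substituting $y_i=x_i^{-1}\in A_i:=I_i^{-1}$ and setting $\beta_i(y_i):=\alpha_i(y_i^{-1})$, the sum to be estimated becomes
$$
T=\sum_{y_1\in A_1}\ldots\sum_{y_n\in A_n}\beta_1(y_1)\ldots\beta_n(y_n)\,e_p(a\,y_1\cdots y_n),
$$
which is an $n$-linear exponential sum with product argument over subsets $A_i\subset\F_p^*$ of size $N$.

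Next, I would invoke Lemma~\ref{lem:B2}, which bounds such a sum by $p^{-\delta}\prod|A_i|$ provided that (i) a volume condition of the form $\prod|A_i|>p^{1/3+\varepsilon}$ is satisfied (precisely matching our standing assumption when $n\ge 7$), and (ii) each set $A_i$ has a non-trivial additive-energy bound, namely $J_{2k}(A_i)\ll|A_i|^{2k-\kappa}$ for some fixed $k$ and $\kappa>0$, uniform in $i$. Hypothesis (ii) is exactly what Theorem~\ref{thm:kI*} supplies: taking $k=2$, we obtain
$$
J_4(A_i)<\bigl(N^{8/3}+N^4/p\bigr)N^{o(1)}.
$$
For $N\le p^{3/4}$ this reads $J_4(A_i)<N^{8/3+o(1)}=|A_i|^{4-4/3+o(1)}$, supplying the admissible value $\kappa=4/3$. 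The complementary range $N\ge p^{3/4}$ is soft: Cauchy--Schwarz in $n-1$ of the variables reduces matters to a single-variable incomplete Kloosterman sum, which is then handled by the Weil bound, exactly as in the bilinear computation recalled in Section~2.

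The main obstacle I anticipate is bookkeeping rather than any fresh idea. One has to verify that the precise quantitative form of Lemma~\ref{lem:B2} can ingest the energy estimate above, and to keep track of how the constants $n\ge 7$, the threshold $1/3$, and the additive exponent $\kappa=4/3$ interact to produce a non-trivial $\delta=\delta(\varepsilon,n)>0$. The hypothesis $n\ge 7$ and the exponent $1/3$ in the volume threshold are almost certainly dictated by a specific quantitative step inside Lemma~\ref{lem:B2} (roughly, the number of factors one needs in order to convert fourth-moment energy bounds into genuine cancellation for the product character), and tracing that step carefully is the only delicate point of the argument.
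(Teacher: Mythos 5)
There is a genuine gap, and it lies in your reading of Lemma~\ref{lem:B2}. That lemma does not take as input a cardinality threshold $\prod|A_i|>p^{1/3+\varepsilon}$ together with an energy bound $J_{2k}(A_i)\ll|A_i|^{2k-\kappa}$; its actual hypothesis is on the $\ell^2$ norms of the weight functions, namely $\prod_i\|\gamma_i\|_2<p^{-1/2-\delta}$ (with each $\|\gamma_i\|_1\le1$). If you feed it the (normalized) indicator of $A_i=I_i^{-1}$ you get $\|\gamma_i\|_2=N^{-1/2}$, so the product condition reads $N^n>p^{1+2\delta}$. That is a volume threshold of $p$, not $p^{1/3}$, and it is not met under your hypothesis $N^n>p^{1/3+\varepsilon}$. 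The additive-energy bound on $A_i$ does nothing to lower $\|\gamma_i\|_2$ for the uniform measure, so the step ``invoke Lemma~\ref{lem:B2}'' fails outright. (A smaller issue: Lemma~\ref{lem:B2} requires the $\gamma_i$ to be non-negative, so the weights $\alpha_i$ must be stripped off before it is invoked; your direct substitution leaves complex weights in place.)

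The missing idea is the preliminary H\"older step. The paper raises the sum to the power $6^n$, applies H\"older in each variable, and arrives at the $n$-linear complete sum $\sum_{\lambda_1,\ldots,\lambda_n}T_1(\lambda_1)\cdots T_n(\lambda_n)e_p(a\lambda_1\cdots\lambda_n)$, where $T_j(\lambda)$ counts solutions of $x_1^*+x_2^*+x_3^*-c_j\equiv\lambda$ with $x_i\in I_j$. One then takes $\gamma_j=T_j/N^3$. Now $\|\gamma_j\|_1\le1$, and $\|\gamma_j\|_2^2=N^{-6}J_6(I_j)$, which by Theorem~\ref{thm:3I*=3I* small |I|} (requiring $N<p^{1/18}$, whence $n\ge7$) is $N^{-3+o(1)}$, i.e.\ $\|\gamma_j\|_2\approx N^{-3/2}$ rather than $N^{-1/2}$. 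This extra decay is what brings the product condition down to $N^{3n/2}>p^{1/2+\delta}$, i.e.\ $N^n>p^{1/3+2\delta/3}$, matching the hypothesis. Note also that the relevant energy input is Theorem~\ref{thm:3I*=3I* small |I|} (the optimal $J_6<N^{3+o(1)}$ in the range $N<p^{1/18}$), not the $k=2$ case of Theorem~\ref{thm:kI*} that you propose: the general bound $J_6<N^{9/2+o(1)}$ would only yield a threshold around $p^{2/3}$, and $J_4<N^{8/3+o(1)}$ (a bilinear H\"older) would give only $p^{3/4}$. Without the H\"older amplification and the optimal triple-sum energy estimate, the stated exponent $1/3$ is out of reach.
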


\begin{theorem}
\label{thm:Kloost epsilon} There exists an absolute constant $C>0$ such that for any positive integer $n$ and any intervals $I_1,\ldots, I_n$ of length $N$  with $N>p^{C/n^2}$, we have
$$
\max_{(a,p)=1}\Bigr|\sum_{x_1\in I_1}\ldots\sum_{x_n\in I_n}\alpha_1(x_1)\ldots \alpha_n(x_n)e_p(ax_1^*\ldots x_n^*)\Bigl|<p^{-\delta}N^n
$$
for some $\delta=\delta(n)>0$.
\end{theorem}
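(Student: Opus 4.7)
The plan is to substitute $y_i=x_i^{-1}$ (writing $\beta_i(y)=\alpha_i(y^{-1})$, so $|\beta_i|\le 1$) and rewrite the sum as
$$
S=\sum_{y_1\in A_1}\cdots\sum_{y_n\in A_n}\beta_1(y_1)\cdots\beta_n(y_n)\,e_p(a\,y_1\cdots y_n),\qquad A_i:=I_i^{-1},
$$
and then combine the additive-structure information on $A_i$ supplied by Theorem \ref{thm:kI*=kI* small |I|} with the multilinear exponential sum bound Lemma \ref{lem:B2}. The point is that the reciprocal sets $A_i$ are \emph{not} given to us as $k$-fold sumsets, but Theorem \ref{thm:kI*=kI* small |I|} tells us that their $k$-fold sumsets behave as if they were.

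Fix a large absolute constant $C$ and choose an integer $k=k(n)\asymp n/\sqrt{C}$, arranged so that $N>p^{C/n^2}$ forces $N<p^{1/(4k^2)}$. In this regime Theorem \ref{thm:kI*=kI* small |I|} (with $c=1/4$) yields $J_{2k}(A_i)<N^{k+o(1)}$. Writing $r_i(\xi)$ for the number of representations $\xi=y_1+\cdots+y_k$ with $y_j\in A_i$, this gives $\|r_i\|_1=N^k$ and $\|r_i\|_2^2<N^{k+o(1)}$; Cauchy--Schwarz then forces $|\operatorname{supp} r_i|=|kA_i|>N^{k-o(1)}$, so each $r_i$ is essentially flat on a set of near-maximal size in $\F_p$.

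Next, apply Hölder's inequality in each coordinate separately (a $2k$-th moment in each of the $n$ variables) to unfold each $y_i\in A_i$ into a $k$-fold sum and absorb the weights $\beta_i$. This reduces matters to a weighted multilinear exponential sum
$$
T=\sum_{\xi_1\in kA_1}\cdots\sum_{\xi_n\in kA_n}r_1(\xi_1)\cdots r_n(\xi_n)\,e_p(a\,\xi_1\cdots\xi_n).
$$
A dyadic pigeonholing on the level sets of each $r_i$ reduces $T$ further to an unweighted multilinear Kloosterman sum over subsets of $kA_i$ of density close to one, which is exactly the hypothesis required by Lemma \ref{lem:B2}. That lemma delivers a saving $p^{-\delta_0(n)}$ over the trivial bound $N^{kn}$, and extracting the appropriate root yields $|S|<p^{-\delta(n)}N^n$ with $\delta(n)>0$.

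The principal obstacle is the joint parameter choice: $k$ must be small enough (at most $\sim n/\sqrt{C}$) for Theorem \ref{thm:kI*=kI* small |I|} to apply, yet large enough that the cancellation from Lemma \ref{lem:B2} survives both the $n$-fold Hölder unfolding and the $N^{o(1)}$ losses in Theorem \ref{thm:kI*=kI* small |I|}. Taking $k\asymp n$ with $C$ an appropriate absolute constant places us inside this feasible window; the resulting $\delta(n)$ may decay rapidly with $n$, which is permitted by the statement. The secondary technical point is the transfer from Lemma \ref{lem:B2} as stated (for sets) to the weighted form with the essentially-flat representation functions $r_i$, which is where the dyadic decomposition and the near-flatness estimate from the previous step are used.
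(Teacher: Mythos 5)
Your proposal follows essentially the same route as the paper: choose $k\asymp n$ so that $N<p^{c/k^2}$ (with $c\le 1/4$ from Theorem~\ref{thm:kI*=kI* small |I|}), use that theorem to bound the $2k$-th moment of $I_j^{-1}$ by $N^{k+o(1)}$, raise the sum to the power $(2k)^n$ by Hölder to pass to a weighted multilinear sum, and then invoke Lemma~\ref{lem:B2}. One minor misreading: you treat Lemma~\ref{lem:B2} as a statement about sets and propose a dyadic pigeonholing on the level sets of the representation functions to reduce to the "set case", but Lemma~\ref{lem:B2} is already stated for arbitrary non-negative weights $\gamma_i$, so the paper simply takes $\gamma_j=T_j/N^k$ (your $r_i$ up to a shift by the fixed $c_j$) and feeds the $L^1$ and $L^2$ bounds in directly — no dyadic decomposition is needed. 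Also, the remark that $kA_i$ has "near-maximal size in $\F_p$" is not right (one only gets $|kA_i|>N^{k-o(1)}$, which is tiny compared to $p$ in this regime), but this plays no role since Lemma~\ref{lem:B2} only requires $\prod_i\|\gamma_i\|_2<p^{-1/2-\delta}$, which you (and the paper) verify via $N^{kn/2}\gg p^{3/4}$.
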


\begin{rem}
It can be proved that Theorem~\ref{thm:Kloost epsilon} holds with $C=4$. This can be done  using
the geometry of numbers in the style of~\cite{BGKS2} to get a suitable for this
version of our Theorem~\ref{thm:kI*=kI* small |I|}.
\end{rem}

\begin{theorem}
\label{thm:Kloost 1/2} Let $I_1,\ldots, I_n$ be intervals in $[1,p-1]$ with
$$
|I_1|\cdots|I_n|>p^{1/2+\varepsilon}.
$$
Then we have
$$
\max_{(a,p)=1}\Bigr|\sum_{x_1\in I_1}\ldots\sum_{x_n\in I_n}\alpha_1(x_1)\ldots \alpha_n(x_n)e_p(ax_1^*\ldots x_n^*)\Bigl|<p^{-\delta}|I_1|\cdots|I_n|
$$
for some $\delta=\delta(\varepsilon,n)>0$.
\end{theorem}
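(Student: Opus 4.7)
The plan is to reduce the Kloosterman sum to a multilinear exponential sum over the sets $A_i := I_i^{-1} \subset \F_p^{*}$ via the substitution $y_i = x_i^{*}$, and then invoke the multilinear exponential sum estimate of~\cite{B2} (Lemma~\ref{lem:B2}), supplying its additive energy hypothesis through Theorem~\ref{thm:kI*}. After the substitution the sum becomes
$$
\sum_{y_1 \in A_1}\cdots\sum_{y_n \in A_n}\beta_1(y_1)\cdots\beta_n(y_n)\, e_p(a\, y_1\cdots y_n),\qquad |\beta_i|\le 1,
$$
with $|A_i|=|I_i|$, so the hypothesis $|A_1|\cdots|A_n|>p^{1/2+\varepsilon}$ matches the condition under which Lemma~\ref{lem:B2} yields power saving.

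I would proceed by induction on $n$. For $n=1$, the Weil bound gives $|\sum_{x\in I}\alpha(x) e_p(a x^{*})| \ll p^{1/2}\log p$, which beats $|I|>p^{1/2+\varepsilon}$ by a factor $p^{-\varepsilon+o(1)}$. For the inductive step, fix a small parameter $\eta=\eta(\varepsilon,n)>0$. If some $|I_i|\le p^{\eta}$, sum trivially over $x_i$:
$$
|S|\le |I_i|\max_{x_i\in I_i}\Bigl|\sum_{\substack{x_j\in I_j\\ j\ne i}}\prod_{j\ne i}\alpha_j(x_j)\,e_p\bigl(a'\prod_{j\ne i}x_j^{*}\bigr)\Bigr|,\qquad a'=a\,x_i^{*},
$$
and the remaining intervals satisfy $\prod_{j\ne i}|I_j|>p^{1/2+\varepsilon-\eta}$, so the inductive hypothesis in $n-1$ variables applies with $\varepsilon'=\varepsilon-\eta>0$.

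In the remaining case every $|I_i|\ge p^{\eta}$ while $\prod|I_i|>p^{1/2+\varepsilon}$. Here I would apply Lemma~\ref{lem:B2}, whose key input is a non-trivial additive energy bound of the form $J_{2k}(A_i)\le |A_i|^{2k-\kappa}$ for a suitable integer $k=k(\eta,\varepsilon,n)$ and some $\kappa=\kappa(k)>0$. When $|I_i|<p^{1/2}$, Theorem~\ref{thm:kI*} gives
$$
J_{2k}(I_i^{-1})\le |I_i|^{2k-\frac{2k}{k+1}+o(1)},
$$
so $\kappa$ can be taken positive (and in fact close to $1$ for large $k$). If on the other hand some $|I_i|\ge p^{1/2}$, the inner one-variable Weil estimate in $x_i$ again yields power saving and drops us to a case in $n-1$ variables. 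Combining the savings with $\prod|I_i|>p^{1/2+\varepsilon}$ produces a power saving $\delta=\delta(\varepsilon,n)>0$.

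The main obstacle is the bookkeeping: one has to calibrate the small parameter $\eta$, the energy exponent $k$, and the loss at each inductive step so that the hypotheses of Lemma~\ref{lem:B2} are met with a quantitative gain, and so that the final $\delta$ depends only on $\varepsilon$ and $n$. A secondary subtlety is that the intervals $I_i$ are at arbitrary positions in $[1,p-1]$, so $A_i=I_i^{-1}$ is not the inverse of an interval through the origin; this is absorbed into the coefficients $\beta_i$ and is accommodated by the fact that Theorem~\ref{thm:kI*} holds for an arbitrary non-zero interval.
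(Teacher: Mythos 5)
Your reduction to $\sum_{y_i\in A_i}\beta_i(y_i)\cdots e_p(ay_1\cdots y_n)$ with $A_i=I_i^{-1}$ and $|\beta_i|\le 1$ is fine, and the reductions involving very small or very large intervals match the paper's preliminary cleanup. But the central step is missing, and the gap is a quantitative one that the current plan cannot bridge.

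Lemma~\ref{lem:B2} requires $\prod_{i=1}^n\|\gamma_i\|_2<p^{-1/2-\delta}$. If you feed it the normalized indicators $\gamma_i=\mathbf{1}_{A_i}/|A_i|$ (after removing the coefficients $\beta_i$, which already demands an argument since the lemma is stated for non-negative weights), you get $\|\gamma_i\|_2=|A_i|^{-1/2}$ and hence $\prod\|\gamma_i\|_2=(\prod|I_i|)^{-1/2}$. Under the hypothesis $\prod|I_i|>p^{1/2+\varepsilon}$ this is only $<p^{-1/4-\varepsilon/2}$, nowhere near $p^{-1/2}$; you would need $\prod|I_i|>p^{1+2\delta}$. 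So ``the hypothesis $|A_1|\cdots|A_n|>p^{1/2+\varepsilon}$ matches the condition under which Lemma~\ref{lem:B2} yields power saving'' is not correct: Lemma~\ref{lem:B2} has no additive-energy hypothesis at all, only $L^1$ and $L^2$ conditions, and the $L^2$ condition fails badly for raw indicators. Merely citing $J_{2k}(A_i)\le|A_i|^{2k-\kappa}$ from Theorem~\ref{thm:kI*} does not supply what the lemma asks for.

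The paper's way around this is the H\"older amplification that your sketch omits. One raises the sum to the power $(2k)^n$ with $k=[2/\varepsilon]$, at which point each variable $x_j$ is replaced by a $2k$-tuple from $I_j$ and the weight becomes the representation function $T_j(\lambda)$ of the congruence $(y_1^*+\cdots+y_k^*)-(y_{k+1}^*+\cdots+y_{2k}^*)\equiv\lambda\pmod p$ with $y_i\in I_j$. One then applies Lemma~\ref{lem:B2} with $\gamma_j=T_j/N_j^{2k}$. The point is that now $\|\gamma_j\|_2^2=N_j^{-4k}\sum_\lambda T_j(\lambda)^2$, and Theorem~\ref{thm:kI*} (for $N_j<p^{1/2}$, after the preliminary reductions) controls $\sum_\lambda T_j(\lambda)^2$ well enough to give $\|\gamma_j\|_2\approx N_j^{-1+O(1/k)}$ rather than $N_j^{-1/2}$. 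Consequently $\prod\|\gamma_j\|_2\lesssim(\prod N_j)^{-1}p^{O(\varepsilon)}<p^{-1/2-\delta}$ under the stated hypothesis. This $L^2$ gain from $N_j^{-1/2}$ to nearly $N_j^{-1}$ is exactly where Theorem~\ref{thm:kI*} enters, and without the H\"older step the argument does not close. Finally, your induction on $n$ is not what the paper does (it reduces once to a normalized case $p^{1/2+\varepsilon}<\prod N_j<2p^{1/2+\varepsilon}$), but that is cosmetic; the missing H\"older amplification is the substantive defect.
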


There is the following `Archimedian' counterpart of the Karatsuba estimate.

\begin{theorem}
\label{thm:Archimed}
Let $\xi\in\R$ with $|\xi|>N_1N_2$ and $k_1,k_2\in\Z_{+}$. Then
\begin{equation}
\label{eqn:ArchBili}
\Bigl|\sum_{\substack{n_1\sim N_1\\ n_2\sim N_2}}e^{i\frac{1}{n_1}\frac{1}{n_2}\,\xi}\Bigr|<c(k_1,k_2,\varepsilon)\gamma\, (N_1N_2)^{1+\varepsilon}
\end{equation}
with
$$
\gamma=\Bigl\{\Bigl(\frac{|\xi|}{N_1N_2}N_1^{-2k_1}+\frac{N_1N_2}{|\xi|}N_1^{2(k_1-1)}\Bigr)
\Bigl(\frac{|\xi|}{N_1N_2}N_2^{-2k_2}+\frac{N_1N_2}{|\xi|}N_2^{2(k_2-1)}\Bigr)\Bigr\}^{1/(4k_1k_2)}
$$
\end{theorem}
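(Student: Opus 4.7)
The plan is to prove Theorem~\ref{thm:Archimed} as an Archimedean analogue of Karatsuba's bilinear bound in Theorem~\ref{thm:Kloost Karatsuba range}, with $|\xi|$ playing the role of the modulus $p$. The first step is to apply H\"older's inequality with exponent $2k_2$ in the $n_2$-variable and then with exponent $2k_1$ in the resulting outer sum. Expanding the two even powers as products and interchanging orders of summation produces
$$
|T|^{4k_1k_2}\le N_1^{2k_1(2k_2-1)}N_2^{2k_2(2k_1-1)}\,\Sigma,\qquad
\Sigma = \sum_{\substack{m_1,\ldots,m_{2k_2}\sim N_2\\ r_1,\ldots,r_{2k_1}\sim N_1}} e^{i\xi L(m)M(r)},
$$
where $L(m)=\sum_{i\le k_2}m_i^{-1}-\sum_{i>k_2}m_i^{-1}$ and $M(r)$ is defined symmetrically. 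Since the phase $\xi L M$ factorizes multiplicatively over the $r_j$'s (and over the $m_i$'s), $\Sigma$ admits the dual representations
$$
\Sigma = \sum_m |F_{N_1}(\xi L(m))|^{2k_1} = \sum_r |F_{N_2}(\xi M(r))|^{2k_2},\qquad F_N(\alpha):=\sum_{n\sim N}e^{i\alpha/n}.
$$

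The core input is a one-variable estimate for $F_N(\alpha)$. Since the phase $\alpha/n$ has monotone first derivative $-\alpha/n^2$ and second derivative $2\alpha/n^3\asymp \alpha/N^3$ on $[N,2N]$, the Kuzmin--Landau first-derivative test yields $|F_N(\alpha)|\ll N^2/|\alpha|$ in the range $N\ll|\alpha|\ll N^2$, and the van der Corput second-derivative test gives $|F_N(\alpha)|\ll(|\alpha|/N)^{1/2}+N^{3/2}|\alpha|^{-1/2}$ for larger $|\alpha|$. I then decompose $\Sigma$ dyadically by the size of $|L(m)|$: on the off-diagonal range $|\xi L(m)|\gg 1$ the one-variable bound provides power-savings, while on the near-diagonal range $|\xi L(m)|\ll 1$ one pays the trivial bound $N_1^{2k_1}$, but only on a small set of tuples $m$. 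The required count is an Archimedean moment estimate
$$
\#\{m_1,\ldots,m_{2k_2}\sim N_2:|L(m)|\le 1/T\}\ll N_2^{2k_2}/T+N_2^{k_2+o(1)},
$$
which follows by clearing denominators and applying a divisor/hyperbola argument to the resulting integer inequality.

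Optimizing the dyadic threshold in terms of $|\xi|$ produces the first factor $\beta N_1^{-2k_1}+\beta^{-1}N_1^{2(k_1-1)}$ of $\gamma^{4k_1k_2}$, with $\beta=|\xi|/(N_1N_2)$; the dual representation yields the second factor by the symmetric argument with the roles of $(N_1,k_1)$ and $(N_2,k_2)$ exchanged. All divisor and logarithmic losses are absorbed into the factor $(N_1N_2)^\varepsilon$.

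The main obstacle is the Archimedean short-interval counting estimate for tuples with $|L(m)|$ small: in the modular setting of Theorem~\ref{thm:Kloost Karatsuba range} this role is played by the congruence count in Theorem~\ref{thm:kI*=kI* I=[1,N]}, but here one needs its exact real-inequality analogue, together with careful divisor-function accounting so that the losses fit inside $(N_1N_2)^\varepsilon$. Once this ingredient is in place, the reconstruction of the two-term structure of $\gamma$ by optimizing the dyadic threshold and combining the two dual representations requires only routine balancing.
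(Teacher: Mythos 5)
Your proposal takes a genuinely different route from the paper's, but as written it has a gap that I do not believe can be repaired without importing the paper's key ingredient.

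The paper raises $S$ to the $k_1k_2$-th power (not $4k_1k_2$), rewrites the resulting expression as a bilinear oscillatory integral $\int\int\alpha(x)\beta(y)e^{ixy\xi/(N_1N_2)}\mu(dx)\nu(dy)$ against normalized image measures $\mu,\nu$ of the reciprocal-sum maps, and then invokes the $L^2$ estimate from~\cite[Theorem~7]{B3}
$$\Bigl|\int\int\alpha\beta\,e^{ixy\xi}\mu(dx)\nu(dy)\Bigr|\ll|\xi|^{-1/2}\|\mu*\varphi_\delta\|_2\,\|\nu*\varphi_\delta\|_2,\quad\delta=(100|\xi|)^{-1},$$
which the authors reprove directly via Plancherel. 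This bilinear $L^2$ bound is the engine of the argument: it converts the oscillatory integral into a product of smoothed autocorrelation norms, each of which is then bounded by the same counting quantity $T(N)$ that you consider, controlled by Karatsuba's Lemma~\ref{lem:Karatsuba}. Crucially, the savings come from Parseval and from the \emph{average} size of $|\widehat\nu|$, not from any pointwise bound on the inner exponential sum.

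Your plan replaces that bilinear $L^2$ step with pointwise one-variable estimates for $F_{N_1}(\alpha)=\sum_{n\sim N_1}e^{i\alpha/n}$ (Kuzmin--Landau and the second-derivative test). This cannot work in the generality of the theorem. For a typical tuple $m$ one has $|L(m)|\asymp 1/N_2$ and hence $\alpha=\xi L(m)\asymp|\xi|/N_2$. In the regime where the theorem is intended to be sharp, $|\xi|/(N_1N_2)$ is of order $N_1^{2k_1-1}$, so $\alpha\asymp N_1^{2k_1}$. For $k_1\ge 2$ this exceeds $N_1^3$, and the van der Corput bound $(\alpha/N_1)^{1/2}+(N_1^3/\alpha)^{1/2}$ already exceeds $N_1$ — the trivial bound. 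So the pointwise estimate you rely on provides no savings at all on the bulk of the $m$-sum precisely when the theorem is nontrivial and $k_1\ge 2$. One would need exponent-pair machinery giving $|F_{N_1}(\alpha)|\ll N_1^{1-1/(2k_1)+o(1)}$ uniformly in $\alpha\le N_1^{2k_1+O(1)}$ to make your decomposition of $\Sigma$ close, and such a pointwise estimate is not available; the paper circumvents this entirely by working in $L^2$.

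A secondary issue: your proposed counting estimate $\#\{m:|L(m)|\le 1/T\}\ll N_2^{2k_2}/T+N_2^{k_2+o(1)}$ is too strong as stated. Because distinct values of $L(m)$ are only $\gtrsim N_2^{-2k_2}$-separated and carry multiplicities whose second moment is $N_2^{k_2+o(1)}$ (Karatsuba), the correct bound, obtained exactly as in the paper's bound for $T(N)$, is $\ll N_2^{k_2+o(1)}(1+N_2^{2k_2}/T)$; the leading term is $N_2^{3k_2+o(1)}/T$, not $N_2^{2k_2}/T$.

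To summarize: the dual representation $\Sigma=\sum_m|F_{N_1}(\xi L(m))|^{2k_1}$ and the use of Karatsuba's lemma are both present in the paper, but the crucial step you omit is the bilinear $L^2$ estimate~\eqref{eqn:B3Measures} from~\cite{B3}. Without it, the reduction to pointwise exponential-sum bounds in a single variable fails to produce the $\gamma$-factor for $k_1\ge 2$.
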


Given $|\xi|>N_1N_2$, choose $k_1,k_2$ satisfying
$$
N_i^{2(k_i-1)}\le \frac{|\xi|}{N_1N_2}<N_i^{2k_i}.
$$
Then each factor in expression for $\gamma$ in Theorem~\ref{thm:Archimed} is $O(1)$.

\smallskip

Exponential sums of the type~\eqref{eqn:ArchBili} appear, for instance, in the proof of Theorem 13.8 in~\cite{FrIw1}
\begin{equation}
\label{eqn:FI pix-pix-y}
\pi(x)-\pi(x-y)\le (2-\delta)\frac{y}{\log y},\quad x^{\theta}<y<x,
\end{equation}
where, as usual, $\pi(z)$ is the number of primes not exceeding $z$ and $\delta=\delta(\theta)>0$. Here $\theta>0$ may be small, $x$ is sufficiently large in terms of $\theta$. In~\cite{FrIw1} the proof of~\eqref{eqn:FI pix-pix-y}
is based on estimates of exponential sums of the form $\sum\limits_{n\sim N}e(\frac{\xi}{n})$ using either Weil or Vinogradov, when $\theta$
is very small. Using Theorem~\ref{thm:Archimed} one gets a better estimate.
\begin{theorem}
\label{thm:pix-pix-y} The estimate~\eqref{eqn:FI pix-pix-y} holds with
$$
\delta<\frac{2(1-\theta)}{12(\theta^{-1}+1)(\theta^{-1}+0.5)+1-\theta}\sim\theta^2.
$$
\end{theorem}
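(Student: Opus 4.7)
The plan is to follow the Friedlander–Iwaniec proof of Theorem 13.8 in \cite{FrIw1}, but replace the one-variable exponential sum input $\sum_{n\sim N}e(\xi/n)$ by the bilinear Archimedean estimate in Theorem~\ref{thm:Archimed}. Recall that the Friedlander–Iwaniec argument estimates $\pi(x)-\pi(x-y)$ via a weighted sieve whose error term, after Fourier expansion of the indicator of the short interval, reduces to bounding $\sum_n \Lambda(n) e(\xi/n)$ and related sums for $n\sim N$ with various $N\le y$ and $|\xi|\asymp x$. In the regime $y=x^\theta$ the Weil bound is too weak; Vinogradov's method is what produces their $\delta\sim\theta^2$, and the goal is simply to obtain a strictly better constant in front of $\theta^2$ by feeding in the Karatsuba-type bilinear estimate instead.

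First I would dualize the prime sum via a Heath–Brown (or Vaughan) identity of a small fixed degree, which writes $\Lambda(n)$ as a bounded linear combination of Dirichlet convolutions supported on the relevant ranges. This reduces the question to bilinear sums
\begin{equation*}
T(N_1,N_2)=\sum_{n_1\sim N_1}\sum_{n_2\sim N_2}\alpha(n_1)\beta(n_2)\,e\!\left(\frac{\xi}{n_1 n_2}\right),\qquad N_1N_2\asymp y,
\end{equation*}
with $|\alpha|,|\beta|\le \tau(n)^{O(1)}$. A standard Cauchy–Schwarz on the longer variable removes one of the coefficient sequences and leaves a sum that, up to a $y^{o(1)}$ factor, is of the exact shape treated in Theorem~\ref{thm:Archimed}.

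Next, with $|\xi|\asymp x$ and $N_1N_2\asymp y=x^\theta$ we have $|\xi|/(N_1N_2)\asymp x^{1-\theta}$. Choosing $k_1,k_2$ as prescribed immediately after Theorem~\ref{thm:Archimed}, namely $N_i^{2(k_i-1)}\le |\xi|/(N_1N_2)<N_i^{2k_i}$, makes each factor in $\gamma$ bounded, so $\gamma=O(1)$ and $|T(N_1,N_2)|\ll (N_1N_2)^{1-\eta+o(1)}$ with power saving $\eta=1/(4k_1k_2)$. Since $N_i\le y\le x^\theta$, the choice of $k_i$ forces $2k_i\le \theta^{-1}-1+O(1)$; averaging over the worst split $N_1,N_2$ one gets $k_1k_2\le (\theta^{-1}+1)(\theta^{-1}+\tfrac12)/(\text{const})$, and the factor $12$ in the statement tracks the number of bilinear ranges produced by the chosen combinatorial identity together with the Cauchy–Schwarz loss. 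Inserting this saving into the Friedlander–Iwaniec sieve error decrements the constant $2$ in $2y/\log y$ by exactly the fraction $2(1-\theta)/\bigl(12(\theta^{-1}+1)(\theta^{-1}+\tfrac12)+(1-\theta)\bigr)$, giving the claimed $\delta$.

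The main obstacle is the bookkeeping rather than any new analytic ingredient: one must (i) pick the combinatorial identity (I would use Heath–Brown of degree $2$ to keep the number of bilinear pieces small, which is what the factor $12$ records), (ii) verify that in every resulting range the Archimedean bilinear bound indeed dominates what Weil/Vinogradov would give in \cite{FrIw1}, including the corner cases where one $N_i$ is very small and Theorem~\ref{thm:Archimed} must be replaced by a trivial bound combined with partial summation, and (iii) carry the explicit constants through the sieve inequality so that the $2-\delta$ factor comes out with precisely the stated $\delta$. Once the parameter choice $k_i\asymp \theta^{-1}$ is made and the saving $1/(4k_1k_2)$ is inserted, the final optimization is elementary.
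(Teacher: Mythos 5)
Your proposal misreads how Theorem~\ref{thm:Archimed} produces a saving, and this breaks the entire mechanism.

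You write that choosing $k_1,k_2$ as in the remark after Theorem~\ref{thm:Archimed}, so that each factor in $\gamma$ is $O(1)$, gives $|T(N_1,N_2)|\ll (N_1N_2)^{1-\eta+o(1)}$ with $\eta=1/(4k_1k_2)$. That is not what the theorem says: if $\gamma=O(1)$ then the bound is $\ll (N_1N_2)^{1+\varepsilon}$, which is trivial and saves nothing. The exponent $1/(4k_1k_2)$ appears as the power \emph{on} $\gamma$, and only turns into a saving if at least one of the two bracketed factors in $\gamma$ is itself a negative power of $N_i$. This is exactly what the paper arranges: it keeps the factorization $MN=D$ free (using \cite[Theorem~12.21]{FrIw1}, which already presents the sieve error as a single bilinear form $R(M,N)$ with an arbitrary split), and then \emph{defines} $M$ by $x/D=M^{2k-1}$ so that both $\frac{\xi}{D}M^{-2k}$ and $\frac{D}{\xi}M^{2(k-1)}$ are $\lesssim M^{-1}$ up to the mild $D/y$ factor. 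The resulting power saving $(x/D)^{-1/(4k(2k-1)l)}$ with $k,l\lesssim\theta^{-1}+O(1)$ is what drives the bound; your choice (both factors $O(1)$) gives no power of $N_i$ to harvest.

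Two further consequences. First, the constant $12$ does not track ``the number of bilinear ranges produced by the combinatorial identity''; it is $\tfrac32\cdot 8$, where $8(\theta^{-1}+1)(\theta^{-1}+0.5)$ bounds $4k(2k-1)l$ from the chosen parameters and the factor $\tfrac32$ absorbs the $(D/y)$ losses in the estimate. Second, there is no Heath--Brown or Vaughan identity in the paper's proof and none is needed: \cite[Theorem~12.21]{FrIw1} already outputs the Type~II bilinear form, and its $MN=D$ flexibility is essential for the parameter choice that makes Theorem~\ref{thm:Archimed} nontrivial. (A combinatorial identity would also force you to control several bilinear ranges simultaneously, and you would then need a uniform saving, which your $\gamma=O(1)$ choice does not deliver in any range.) Finally, a small point: $N_1N_2=D$ is a little larger than $y$ (one needs $D>y$ to even state $\pi(x)-\pi(x-y)<2y/\log D$), so writing $N_1N_2\asymp y$ obscures that the gain in $\delta$ comes precisely from how much $\log D$ can exceed $\log y$.
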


We shall apply trilinear exponential sum bounds from~\cite{B2} (see Lemma~\ref{lem:B2} below) to a linear Kloosterman sums and Brun-Titchmarsh theorem.

\begin{theorem}
\label{thm:linear sqrt log p}
The following bound holds:
$$
\max_{(a,p)=1}\Bigl|\sum_{n\le N}e_p(an^*)\Bigr|\ll \frac{(\log\log p)^3\log p}{(\log N)^{3/2}}\,N,
$$
where the implied constant is absolute.
\end{theorem}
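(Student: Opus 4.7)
The goal is to reduce the one-variable Kloosterman sum $S(a)=\sum_{n\le N}e_p(an^*)$ to a trilinear Kloosterman sum so that the multilinear exponential sum bound of Bourgain~\cite{B2} (Lemma~\ref{lem:B2}), together with our new reciprocal estimates (Theorems~\ref{thm:kI*} and~\ref{thm:kI*=kI* small |I|}), becomes applicable. This mirrors the way Karatsuba's method converts linear sums into bilinear ones, but uses three factors to harvest a $(\log N)^{3/2}$ saving.

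\textbf{Step 1 (Multiplicative decomposition).} For each $n\le N$, seek a factorisation $n=m_1m_2m_3$ with $m_i$ in prescribed dyadic ranges $(M_i,2M_i]$, $M_1M_2M_3\asymp N$. Averaging over representations (via a $\tau_3$-normalisation, or a Heath-Brown/Vaughan-type combinatorial identity) rewrites
\begin{equation*}
\sum_{n\le N}e_p(an^*)=\sum_{(M_1,M_2,M_3)}\;\sum_{m_i\sim M_i}\alpha(m_1)\beta(m_2)\gamma(m_3)\,e_p\bigl(a(m_1m_2m_3)^*\bigr)+\mathcal{E},
\end{equation*}
where $\alpha,\beta,\gamma$ are bounded coefficient sequences absorbing the divisor weights and $\mathcal{E}$ collects the $n$'s lacking an admissible three-fold factorisation. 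By the Hardy--Ramanujan/Turán--Kubilius concentration of $\omega(n)$ around $\log\log N$, the size of this exceptional set is $\ll N(\log\log N)^3/(\log N)^{3/2}$; this produces the $(\log\log p)^3$ factor in the final bound.

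\textbf{Step 2 (Trilinear bound and assembly).} Apply Lemma~\ref{lem:B2} to each inner trilinear Kloosterman sum. Provided the dyadic scales $M_i$ lie above the threshold imposed by the lemma, each triple contributes at most $M_1M_2M_3$ times a saving whose combined strength across the three variables is $\log p/(\log N)^{3/2}$. Summing over the $O(\log N)^3$ admissible dyadic triples and incorporating Step~1 gives the desired estimate, the single $\log p$ in the numerator being the Weil-type loss inherent in Lemma~\ref{lem:B2}.

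\textbf{Main obstacle.} The principal difficulty is choosing the dyadic decomposition so that the hypotheses of Lemma~\ref{lem:B2} are met simultaneously for (essentially) every triple $(M_1,M_2,M_3)$ with $M_1M_2M_3\asymp N$, even when $N$ is as small as $\exp\bigl((\log p)^{2/3}(\log\log p)^2\bigr)$ — the regime in which the bound is non-trivial. The trilinear bound must yield a uniform $(\log N)^{-3/2}$ saving across all admissible scales, and the $O((\log N)^3)$ loss from summing over dyadic triples must be reabsorbed (against the saving of Lemma~\ref{lem:B2}) into the single $\log p$ factor of the stated inequality; tracking constants through this balancing is the delicate technical core.
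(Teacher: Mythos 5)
Your high-level plan (factor $n$ into three pieces, pass to a trilinear Kloosterman sum, invoke Lemma~\ref{lem:B2}, and estimate an exceptional set) is aligned with the paper's, but the details diverge in ways that matter, and as written the argument has a gap that would prevent Lemma~\ref{lem:B2} from being applicable.

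The paper does not perform a $\tau_3$-type factorisation $n=m_1m_2m_3$ with general integer factors. Instead it singles out the three \emph{largest prime factors} $p_1\ge p_2\ge p_3$ of $n$, writes $n=p_1p_2p_3y$ with $y$ friable ($P(y)\le p_3$), and defines the good set $\cG$ by $p_1\ge N^\alpha$, $p_3\ge N^\beta$, $p_1p_2p_3< N^{1-\beta}$. The exceptional set $N-|\cG|$ is bounded via smooth-number counts $\Psi(x,y)$ (de Bruijn) together with crude bounds on numbers that are a product of $\le 2$ primes; this gives $N-|\cG|\lesssim\beta(\log\log p)^2 N$ with $\beta$ a free parameter, not a fixed quantity like $(\log\log N)^3/(\log N)^{3/2}$ dictated by concentration of $\omega(n)$. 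After an extra Cauchy--Schwarz in the smooth tail $y$ (a step absent from your sketch) what remains is a genuinely trilinear sum over \emph{primes} $p_1\in I_1,p_2\in I_2,p_3\in I_3$ in short intervals.

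This is where the crucial missing input appears. To apply Lemma~\ref{lem:B2} you must verify $\|\gamma_i\|_2<p^{-\delta}$ for the normalized level-set densities of the inner amplified sums. The paper obtains this from Theorem~\ref{thm:kI*=kI* I=[1,N] prime}, the $J_{2k}$ bound for sums of reciprocals of \emph{primes} in $[1,N]$, which holds with no logarithmic losses and yields $\|\gamma_i\|_2<p^{-1/5}$. Your proposal cites Theorems~\ref{thm:kI*} and~\ref{thm:kI*=kI* small |I|}, which are for intervals of general integers; moreover a $\tau_3$-decomposition leaves you with general integer variables $m_i$ in intervals $[M_i,2M_i]$ not starting at the origin, for which the corresponding $J_{2k}$ control does not exist at the required strength (and would in any case reintroduce divisor-function losses). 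Without the prime variant, you cannot verify the $\ell^2$ hypothesis of Lemma~\ref{lem:B2}.

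Finally, the final saving $(\log N)^{-3/2}$ is not produced directly by Lemma~\ref{lem:B2} as your Step~2 suggests; that lemma gives only $p^{-\delta'}$ with $\delta'$ depending on the scales $k_j\asymp 1/(\varepsilon\alpha)$ or $1/(\varepsilon\beta)$ arising from Karatsuba amplification. The shape of the final bound emerges from balancing the exceptional-set loss $\beta(\log\log p)^2 N$ against the trilinear saving $N\cdot p^{-c\varepsilon^3\alpha\beta^2}$, which forces $\beta\sim \log\log p\cdot\log p/(\log N)^{3/2}$ and hence the stated estimate. So: the skeleton of your plan is right, but you need (i) the top-three-prime decomposition rather than $\tau_3$, (ii) the Cauchy--Schwarz removal of the friable cofactor, (iii) Theorem~\ref{thm:kI*=kI* I=[1,N] prime} to power the $\ell^2$ bound, and (iv) an explicit optimization over $\beta$ to extract the $(\log N)^{-3/2}$ decay.
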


It follows that if $N=p^{\varepsilon}$ with $\varepsilon$ fixed, the saving is $O((\log\log p)^3/(\log p)^{1/2})$ and the estimate is nontrivial
if $N>\exp((\log p)^{\frac{2}{3}}(\log\log p)^3).$
This improves some results of Korolev~\cite{Kor} in the case of prime moduli. We also refer the reader to~\cite{Kor1} for some variants of the problem.

We remark that in~\cite{KarKKR} it is claimed that if $\varepsilon>0$ is fixed, then for $p^{\varepsilon}<N<p^{4/7}$ one has the bound
$$
\Bigl|\sum_{n=1}^{N}e_p(an^*)\Bigr|<\frac{N}{(\log N)^{1-\varepsilon}},
$$
but the proof given there is in doubt.

For $(a,q)=1$, $\pi(x;q,a)$ denotes the number of primes $p\le x, p\equiv a\pmod q.$ We aim to improve the result of Friedlander-Iwaniec on $\pi(x;q,a)$ as follows:

\begin{theorem}
\label{thm:BrunTitch}
Let $q=x^{\theta}$, where $\theta<1$ is close to 1. Then
$$
\pi(x;q,a)<\frac{cx}{\phi(q)\log\frac{x}{q}}
$$
with $c=2-c_1(1-\theta)^2$, for some absolute constant $c_1>0$ and all sufficiently large $x$ in terms of $\theta$.
\end{theorem}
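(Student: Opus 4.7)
My plan is to adapt the Friedlander--Iwaniec approach to beating the Brun--Titchmarsh theorem in the regime when $q$ is close to $x$, replacing their classical (Weil-type) Kloosterman sum input by the trilinear exponential sum bound of Lemma~\ref{lem:B2} together with the linear Kloosterman estimate of Theorem~\ref{thm:linear sqrt log p}. One starts from Selberg's upper bound sieve at level $D$: the main term is $\frac{x}{\phi(q)G(D)}$ with $G(D)=\sum_{d\le D,\,(d,q)=1}\mu^2(d)/\phi(d)\sim\log D$, and the classical choice $D^2=x/q$ produces the constant $2$. Taking $D^2=(x/q)^{1+2\eta}$ for some small $\eta=\eta(\theta)>0$ would give a constant $\frac{2}{1+2\eta}\sim 2-2\eta$, provided the enlarged remainder term can still be controlled.

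To control this remainder, I would decompose the discrepancy sums $\sum_{d\le D^2}\tau(d)R_d(q;a)$ by a Vaughan or Heath--Brown combinatorial identity for $\Lambda$, producing Type I sums (trivially handled for $q=x^\theta$ close to $x$) and Type II pieces of bilinear shape
$$
T(M,N)=\sum_{m\sim M}\sum_{n\sim N}\alpha_m\beta_n\,\mathbf{1}_{mn\equiv a\,(q)},\qquad MN\le x,
$$
with $M,N$ in the rectangle permitted by the identity. Completing the congruence mod $q$ by additive characters, the non-principal frequency contribution becomes $\frac{1}{q}\sum_{h\ne 0}\bigl(\sum_m\alpha_m e_q(-ha m^*)\bigr)\bigl(\sum_n\beta_n e_q(hn)\bigr)$. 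After a dyadic factorisation $m=m_1m_2$, the first factor is a bilinear Kloosterman sum of the type handled by Lemma~\ref{lem:B2}; in the boundary ranges where no useful factorisation of $m$ is available, Theorem~\ref{thm:linear sqrt log p} supplies the required saving for the inner linear Kloosterman sum.

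Inserting the resulting bound on $T(M,N)$ into the Selberg error and optimising $\eta$ against this saving, one finds that the admissible $\eta$ is of order $1-\theta$, because the length of the Kloosterman sums produced in the Type II range is itself governed by $\log(x/q)=(1-\theta)\log x$. The net gain in the sieve constant is then of order $\eta\cdot(1-\theta)\asymp(1-\theta)^2$, yielding the claimed $c=2-c_1(1-\theta)^2$.

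\textbf{Main obstacle.} The difficulty is quantitative: Lemma~\ref{lem:B2} and Theorem~\ref{thm:linear sqrt log p} afford only modest savings (a small fixed power in the generic trilinear range, and merely a power of $\log x$ for very short linear sums), so the improved constant has to be squeezed out by a tight balance between the sieve level $D$ and the admissible length of the exponential sums. The delicate point is to show that the optimal $\eta$ scales \emph{linearly} in $1-\theta$, so that the overall gain is genuinely quadratic in $1-\theta$ rather than of a higher order (as it would be with a cruder Weil-based input). This requires careful bookkeeping of the parameter ranges throughout both the Selberg optimisation and the multilinear estimates from~\cite{B2}.
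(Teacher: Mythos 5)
Your high-level strategy (enlarge the sieve level and control the remainder with multilinear Kloosterman estimates) is the right idea, but both the architecture and the bookkeeping diverge from what works. The paper's proof does not increase the Selberg level directly: following Friedlander and Iwaniec, it starts from Buchstab's identity and improves the level of distribution only for the double-prime subsequence $\cA_{p_1p_2}$, with $p_1,p_2$ confined to dyadic boxes near $x^{\delta}$, $\delta=(1-\theta)/5$. The remainder $\sum_{d<D_{12}'}\sum_{p_1\sim P_1,\,p_2\sim P_2}R_{p_1,p_2,d}$ is turned by Poisson summation into a \emph{trilinear} Kloosterman sum $\sum_{d,p_1,p_2}\alpha_d\beta_{p_1}\gamma_{p_2}e_q(-ahd^*p_1^*p_2^*)$ over intervals of length $\approx x^{O(\delta)}$, which is then treated by Karatsuba amplification followed by Lemma~\ref{lem:B2}. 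The three-fold variable structure in the sieve dimension is essential to produce the trilinear shape. Your proposal — Selberg at a higher level $D^2=(x/q)^{1+2\eta}$ plus a Vaughan/Heath-Brown decomposition of $\Lambda$ — is not how either the main proof or the paper's alternative (in the Comments section) proceeds: there is no $\Lambda$ to decompose here, and the bilinear structure of the remainder in the alternative comes from Iwaniec's well-factorable weights (\cite[Theorem 12.21]{FrIw1}), not from a combinatorial identity. Also, Theorem~\ref{thm:linear sqrt log p} is not used in the proof of Theorem~\ref{thm:BrunTitch}; the relevant bilinear input in the paper's alternative is Theorem~\ref{thm:Kloost Karatsuba range}.

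The more serious issue is quantitative. You claim the admissible $\eta$ is of order $1-\theta$ and that "the net gain in the sieve constant is $\eta\cdot(1-\theta)$"; both statements are off by one factor of $1-\theta$ each, in opposite directions, so your final answer is coincidentally right. With $D^2=(x/q)^{1+2\eta}$, the Selberg constant is $2/(1+2\eta)\approx 2-4\eta$, so the gain in $c$ is $\asymp\eta$, not $\eta(1-\theta)$. On the other hand, the multiplicative increase in level is $(x/q)^{2\eta}=x^{2\eta(1-\theta)}$, and the remainder must be saved by at least this factor. The multilinear Kloosterman estimate, after Karatsuba amplification with parameter $k\asymp(1-\theta)^{-1}$ over intervals of length $\approx x^{O(1-\theta)}$, saves only $x^{c(1-\theta)^3}$ (the $(1-\theta)^3$ arises from $p^{-c/(k_1k_2k_3)}$ with $k_i\asymp(1-\theta)^{-1}$ in the trilinear case, or analogously from $N^{c/k^2}$ with $N\approx x^{O(1-\theta)}$ and $k\asymp(1-\theta)^{-1}$ in the bilinear case). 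Balancing $x^{2\eta(1-\theta)}$ against $x^{c(1-\theta)^3}$ gives $\eta\asymp(1-\theta)^2$, hence gain $\asymp\eta\asymp(1-\theta)^2$, as the theorem states. Your claimed $\eta\asymp 1-\theta$ would give the much stronger $c=2-c_1(1-\theta)$, which the exponential sum technology does not support.
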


The constant $c_1$ is effective and can be made explicit.

\section{Preliminaries}

Throughout the paper we will use well-known connections between the number of solutions of symmetric equations and
the cardinality of corresponding set. Let $T$ be the number of solutions of the equation
$$
x_1+\ldots+x_n=y_1+\ldots+y_n,
$$
where for each $i$ the variables $x_i,y_i$ run through a set $A_i$. Then for any subset
$$
\Omega\subset A_1\times\ldots\times A_n,
$$
one has the bound
$$
|\{x_1+\ldots+x_n:\,\, (x_1,\ldots x_n)\in\Omega\}|\ge \frac{|\Omega|^2}{T}.
$$
This estimate follows from the observation that  if $T_n(\Omega;\lambda)$
is the number of solutions of the equation
$$
x_1+\ldots+x_n=\lambda,\quad (x_1,\ldots,x_n)\in\Omega,
$$
and
$$
X=\{x_1+\ldots+x_n:\,\, (x_1,\ldots x_n)\in\Omega\},
$$
then
$$
T\ge \sum_{\lambda\in X}T_n(\Omega;\lambda)^2\ge \frac{1}{|X|}\Bigl|\sum_{\lambda\in X}T_n(\Omega; \lambda)\Bigr|^2=\frac{|\Omega|^2}{|X|}.
$$
In particular,
$$
|A_1+\ldots +A_n|=\#\{a_1+\ldots+a_n:\, a_i\in A_i\}\ge\frac{|A_1|^2\ldots |A_n|^2}{T}.
$$

We note that if $A_1,\ldots,A_{2n}\subset \F_p$ and $T_{2n}(\lambda)$ is the number of solutions of the congruence
$$
x_1+\ldots+x_{2n}\equiv \lambda\pmod p,\quad (x_1,\ldots,x_{2n})\in A_1\times\ldots\times A_{2n},
$$
then
$$
T_{2n}(\lambda)\le (J_1\ldots J_{2n})^{\frac{1}{2n}},
$$
where $J_{i}$ is the number of solutions of the congruence
$$
y_1+\ldots+y_n\equiv y_{k+1}+\ldots+y_{2k}\pmod p,\qquad y_1,\ldots,y_{2k}\in A_i.
$$
Indeed, we have
$$
T=\frac{1}{p}\sum_{a=0}^{p-1}\sum_{x_1\in A_1}\ldots\sum_{x_{2n}\in A_{2n}}e_p(ax_1)\ldots e_p(ax_{2n})e_p(-a\lambda).
$$
Applying H\"older's inequality we get
$$
T\le
\prod_{j=1}^n\Bigl(\frac{1}{p}\sum_{a=0}^{p-1}
\Bigl|\sum_{x_j\in A_j}e_p(ax_j)\Bigr|^{2n}\Bigr)^{\frac{1}{2n}}=(J_1\ldots J_{2n})^{\frac{1}{2n}}.
$$

In proofs of some of our results we will use the observation that if $X,Y\in \F_p$, then the number of solutions of the congruence equation
$$
\frac{1}{y+x_1}+\ldots+\frac{1}{y+x_n}= \frac{1}{y+x_{n+1}}+\ldots+\frac{1}{y+x_{2n}},\quad x_i\in X,\, y\in Y,
$$
is at most $O(|X|^nY+|X|^{2n})$, the implied constant may depend only on $n$. Indeed, the contribution from
those $(x_1,\ldots,x_{2n})\in X^{2n}$ for which the series $
x_1,\ldots,x_{2n}$
contains at most $n$ distinct elements, is $O(|X|^n|Y|)$. On the other hand, if there are more than $n$ distinct elements in this series, then we can assume
that $x_1\not\in \{x_2,\ldots,x_{2n}\}$. For each such given $(x_1,\ldots,x_{2n})\in X^{2n}$ the polynomial
$$
P(Z)=\prod_{i\not=1}(Z+x_i)+\ldots+\prod_{i\not=n}(Z+x_i)-\prod_{i\not=n+1}(Z+x_i)-\ldots-\prod_{i\not=2n}(Z+x_i)
$$
is nonzero (as $P(-x_1)\not=0$) and since $P(y)=0$ we get at most $2n-1$ possibilities for $y$. See also~\cite[Lemmas 2,3 ]{Bak} for more
general statements.

\section{Multilinear exponential sums}

The following result, which we state as a lemma, has been proved by Bourgain~\cite{B2}. It is based on results from additive combinatorics,
in particular sum-product estimates. This lemma will be used in the proof of our results  on
Kloosterman sums.
\begin{lemma}
\label{lem:B2}
Let $\gamma_1(x_1),\ldots,\gamma_n(x_n)$ be non-negative real numbers satisfying
$$
\|\gamma_i\|_1=\sum_{x=0}^{p-1}|\gamma_i(x)|\le 1,\quad \|\gamma_i\|_2=\Bigl(\sum_{x=0}^{p-1}|\gamma_i(x)|^2\Bigr)^{1/2}<p^{-\delta}.
$$
Assume further
$$
\prod_{i=1}^{n}\|\gamma_i\|_2<p^{-1/2-\delta},
$$
where $0<\delta<1/4$. Then there is the exponential sum bound
$$
\Bigl|\sum_{x_1=0}^{p-1}\ldots \sum_{x_n=0}^{p-1}\gamma_1(x_1)\ldots\gamma_n(x_n)e_p(x_1\ldots x_n)\Bigr|<p^{-\delta'}
$$
with some $\delta'>(\delta/n)^{Cn}$.
\end{lemma}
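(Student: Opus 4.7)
The plan is to combine a dyadic reduction to characteristic-function sums with iterated Cauchy--Schwarz and a bilinear sum-product estimate in $\F_p$.

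\medskip

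First, I would dyadically decompose each weight as $\gamma_i=\sum_{k\ge 0}2^{-k}\mathbf{1}_{A_{i,k}}$ with $A_{i,k}=\{x:2^{-k-1}<\gamma_i(x)\le 2^{-k}\}$. The hypotheses $\|\gamma_i\|_1\le 1$ and $\|\gamma_i\|_2<p^{-\delta}$ force $|A_{i,k}|\le 2^k$ and $|A_{i,k}|\le 2^{2k}p^{-2\delta}$, so in the relevant range $|A_{i,k}|<p^{1-2\delta}$. Pigeonholing over the $n$-tuple of scales $(k_1,\ldots,k_n)$ only costs a factor $(\log p)^n$, reducing the weighted claim to the following: if $A_i\subset\F_p$ satisfy $|A_i|<p^{1-2\delta}$ together with an analogue of the product condition, then
$$\Bigl|\sum_{x_1\in A_1}\ldots\sum_{x_n\in A_n}e_p(x_1\cdots x_n)\Bigr|<|A_1|\cdots|A_n|\,p^{-\delta''}.$$

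\medskip

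Second, I would reduce the $n$-linear problem to a bilinear one by induction on $n$. Applying Cauchy--Schwarz in $x_n$ gives
$$|S|^2\le |A_n|\sum_{x_i,x_i'\in A_i}\Bigl|\sum_{x_n\in A_n}e_p\bigl(x_n(x_1\cdots x_{n-1}-x_1'\cdots x_{n-1}')\bigr)\Bigr|.$$
The inner phase is linear in $x_n$, so the outer sum unfolds into the additive energy of the multiplicative convolution $A_1\cdots A_{n-1}$, which via Pl\"unnecke--Ruzsa bounds is controlled in terms of the additive energies of the individual $A_i$. After $n-2$ iterations one reaches a bilinear expression $\sum_{a\in A,b\in B}\alpha(a)\beta(b)e_p(ab)$, where $A,B$ are multiplicative convolution products of the original sets and $\alpha,\beta$ have suitable $L^1$ and $L^2$ control inherited from the $\gamma_i$.

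\medskip

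Third, I would invoke the Bourgain--Glibichuk--Konyagin bilinear exponential sum estimate: for $A,B\subset\F_p^*$ with $|A|,|B|>p^\delta$ and $|A||B|>p^{1+\delta}$,
$$\max_{(a,p)=1}\Bigl|\sum_{x\in A}\sum_{y\in B}e_p(a\,xy)\Bigr|<p^{-\delta_1}|A||B|.$$
This rests on the $\F_p$ sum-product theorem $|A+A|+|AA|\gg |A|^{1+c}$ combined with a Balog--Szemer\'edi--Gowers argument: a large Fourier coefficient of the multiplicative convolution would force a large additively structured component inside $A$ or $B$ that then contradicts its multiplicative spread.

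\medskip

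The main obstacle is quantitative bookkeeping. Each Cauchy--Schwarz step doubles the entropies of the intermediate multiplicative convolutions and requires a fresh appeal to Pl\"unnecke--Ruzsa to keep their additive energies controlled, while the $L^2$ hypothesis on the $\gamma_i$ must be shepherded through the induction without collapse. The compounded exponent losses at each of the $n$ stages are exactly what produces the stated saving $\delta'>(\delta/n)^{Cn}$, and preventing the intermediate sets from approaching size $p^{1-\delta}$ (where the sum-product input degenerates) is the delicate technical point.
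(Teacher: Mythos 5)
This lemma is not proved in the present paper: it is stated as a cited theorem of Bourgain \cite{B2}, and the proof in that reference occupies essentially the whole paper. There is therefore no internal proof to compare against; the authors intend it as a black box.

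On the merits of your sketch: the general direction (dyadic level-set decomposition, iterated Cauchy--Schwarz, reduction to a bilinear estimate resting on the $\F_p$ sum-product theorem and a Balog--Szemer\'edi--Gowers step) is indeed the right family of ideas, but there is a concrete gap at the second step. After Cauchy--Schwarz in $x_n$ (note the quantifier structure as you wrote it is inverted --- the $x_n$-sum should be the outer one, with the pairs $(x_i,x_i')$ inside), what has to be controlled is the $L^2$-norm, equivalently the multiplicative energy, of the pushforward of $\gamma_1\otimes\cdots\otimes\gamma_{n-1}$ under the product map. Your assertion that this ``unfolds into the additive energy of the multiplicative convolution $A_1\cdots A_{n-1}$, which via Pl\"unnecke--Ruzsa bounds is controlled in terms of the additive energies of the individual $A_i$'' is not a theorem: Pl\"unnecke--Ruzsa bounds cardinalities of iterated sumsets under a doubling hypothesis, and gives no control on the additive or multiplicative energy of a product set in terms of the energies of its factors. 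Preventing the intermediate multiplicative convolutions from concentrating --- i.e., from acquiring small effective support or large energy, which would destroy the hypotheses needed for the closing bilinear estimate --- is exactly where the work lies in \cite{B2}: one needs the sum-product theorem and a BSG-type structural dichotomy to show that in the failure case the measure resembles a coset of a multiplicative subgroup, a case handled separately. Relatedly, the bound $\delta'>(\delta/n)^{Cn}$ is the output of a careful bookkeeping through these structural steps; asserting that the ``compounded exponent losses at each of the $n$ stages are exactly what produces the stated saving'' restates the conclusion rather than deriving it.
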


\section{Resultant Bound}

We shall need the following resultant bound from~\cite{BGKS2}.

\begin{lemma}
\label{lem:DeterMagic} Let $N\ge1$,
$\sigma,\vartheta\in\R$, and let  $m,n\ge2$ be fixed integers.
Assume also that one of the
following conditions hold:
\begin{itemize}
\item[(i)] $\sigma\ge0$;
\item[(ii)]$\vartheta\ge0$;
\item[(iii)] $\sigma+\vartheta\ge -1$.
\end{itemize}
Let
$P_1(Z)$ and $P_2(Z)$ be non-constant polynomials with integer coefficients,
$$
P_1(Z)=\sum_{i=0}^{m-1}a_{i}Z^{m-1-i}, \qquad P_2(Z)=
\sum_{i=0}^{n-1}b_{i}Z^{n-1-i}
$$
such that
\begin{equation*}
\begin{split}
&|a_{i}|< A N^{i+\sigma}, \quad i =0, \ldots, m-1,\\
&|b_{i}|< A N^{i+\vartheta}, \quad i =0, \ldots, n-1,
\end{split}
\end{equation*}
for some $A$. Then
$$
\Res(P_1, P_2)\ll N^{(m-1+\sigma)(n-1+\vartheta) - \sigma\vartheta.},
$$
where the implicit constant in $\ll$ depends only on $A$, $m$ and $n$.
\end{lemma}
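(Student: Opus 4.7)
The plan is to realize $\Res(P_1,P_2)$ as the determinant of the standard Sylvester matrix $S$, a square matrix of order $m+n-2$ whose rows are $n-1$ successive shifts of the coefficient vector $(a_0,\ldots,a_{m-1})$ followed by $m-1$ successive shifts of $(b_0,\ldots,b_{n-1})$, padded with zeros. The bound is then obtained by expanding $\det S$ via the Leibniz formula and estimating each term uniformly:
$$
\det S = \sum_{\pi}\mathrm{sgn}(\pi)\prod_{r}S_{r,\pi(r)},
$$
where $\pi$ ranges over permutations of the $m+n-2$ column indices $\{0,1,\ldots,m+n-3\}$.

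For a nonzero term, the entry picked in the $r$-th $P_1$-row (indexed by $r\in\{0,\ldots,n-2\}$) equals $a_{\pi(r)-r}$ with $\pi(r)-r\in\{0,\ldots,m-1\}$, and analogously for the $P_2$-rows. Using the hypotheses $|a_i|<AN^{i+\sigma}$ and $|b_j|<AN^{j+\vartheta}$, each term is bounded by $A^{m+n-2}$ times $N$ raised to the exponent
$$
\sum_{r\in P_1\text{-rows}}(\pi(r)-r+\sigma)\,+\sum_{r\in P_2\text{-rows}}(\pi(r)-r+\vartheta).
$$
The key observation is that this exponent is permutation-independent: since $\pi$ is a bijection, $\sum_r\pi(r)=\binom{m+n-2}{2}$, while the row indices contribute $\binom{n-1}{2}+\binom{m-1}{2}$, and a short arithmetic simplification yields the total exponent
$$
(m-1)(n-1)+(n-1)\sigma+(m-1)\vartheta = (m-1+\sigma)(n-1+\vartheta)-\sigma\vartheta,
$$
which is precisely the exponent in the claim. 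Multiplying by $(m+n-2)!$ and $A^{m+n-2}$ gives the implicit constant depending only on $A,m,n$.

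The delicate part, and what I expect to be the main obstacle, is the role of the three alternative hypotheses (i)--(iii). When $\sigma$ or $\vartheta$ is sufficiently negative, the bound $AN^{i+\sigma}<1$ forces the corresponding integer coefficient $a_i$ (or $b_j$) to vanish, so the "effective" degree of $P_1$ or $P_2$ may drop and the Sylvester matrix acquires further zeros. The telescoping identity above is valid term-by-term, but care is needed to ensure that the application of $|a_i|\le AN^{i+\sigma}$ with $i+\sigma<0$ does not overestimate. Each of (i), (ii), (iii) is exactly what is required for the bookkeeping to close: under (i) or (ii), all rows of one block have honest $N$-exponents bounded below by $0$; under (iii), a case analysis (splitting on which rows correspond to negative per-entry exponents and using $\sigma+\vartheta\ge -1$) shows that any column whose $a$-entry has exponent $<0$ is compensated by the matching $b$-column in the permutation. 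Once this case analysis is carried out, one sums over the finitely many permutations and absorbs the losses into the constant, completing the proof.
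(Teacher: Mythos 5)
The paper itself does not prove Lemma~\ref{lem:DeterMagic}; it is quoted from~\cite{BGKS2}. So there is no in-paper proof to compare against, and I will assess your argument on its own merits.

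Your core computation is correct and is, I believe, morally the same as what is done in~\cite{BGKS2}: identify $\Res(P_1,P_2)$ with the $(m+n-2)\times(m+n-2)$ Sylvester determinant, expand by Leibniz, and observe that every nonzero product has the same $N$-exponent because $\pi$ is a bijection. Concretely, writing the $P_1$-rows as $r\in\{0,\ldots,n-2\}$ and the $P_2$-rows via $r'=r-(n-1)\in\{0,\ldots,m-2\}$, a nonzero term contributes
$\sum_{r<n-1}(\pi(r)-r+\sigma)+\sum_{r\ge n-1}(\pi(r)-r'+\vartheta)$,
and using $\sum_r\pi(r)=\binom{m+n-2}{2}$, $\sum_{r<n-1}r=\binom{n-1}{2}$, $\sum r'=\binom{m-1}{2}$, together with $\binom{m+n-2}{2}-\binom{n-1}{2}-\binom{m-1}{2}=(m-1)(n-1)$, this collapses to $(m-1)(n-1)+(n-1)\sigma+(m-1)\vartheta=(m-1+\sigma)(n-1+\vartheta)-\sigma\vartheta$. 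Summing over $(m+n-2)!$ permutations and using $|a_i|<AN^{i+\sigma}$, $|b_j|<AN^{j+\vartheta}$ gives $|\Res(P_1,P_2)|<(m+n-2)!\,A^{m+n-2}N^{(m-1+\sigma)(n-1+\vartheta)-\sigma\vartheta}$, which is the claim.

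However, your final paragraph is confused and should be removed or corrected. You claim the hypotheses (i)--(iii) are ``exactly what is required for the bookkeeping to close'' and allude to an unperformed case analysis, but in fact your term-by-term estimate is valid for \emph{all} $\sigma,\vartheta\in\R$. Using the given upper bound $|a_i|<AN^{i+\sigma}$ when $i+\sigma<0$ cannot cause an ``overestimate'' in a harmful direction: an upper bound on $|a_i|$ smaller than $1$ (forcing $a_i=0$) only shrinks the resulting product bound, and the permutation-independent exponent identity holds as an algebraic identity regardless of sign. So no case split on (i)--(iii) is required for the inequality you prove. The hypotheses (i)--(iii) are present in the lemma as stated in~\cite{BGKS2}, presumably because that paper's proof is organized differently (e.g.\ via the rescaling $Z\mapsto NZ$ and the identity $\Res(P_1(NZ),P_2(NZ))=N^{(m-1)(n-1)}\Res(P_1,P_2)$, where one then uses the uniform bounds $|a_iN^{m-1-i}|<AN^{m-1+\sigma}$ and $|b_jN^{n-1-j}|<AN^{n-1+\vartheta}$) or because they are needed to make the conclusion useful in the intended application; but they are not needed to establish the inequality itself. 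You should state plainly that the Leibniz estimate closes without them, rather than gesturing at a nonexistent case analysis.

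One small expository point: the Sylvester determinant of size $m+n-2$ equals the classical resultant only when $a_0,b_0\neq0$; if leading coefficients vanish, $\Res$ is simply \emph{defined} here to be that determinant, which is what the application in the proof of Theorem~\ref{thm:kI*=kI* small |I|} uses. It is worth saying this explicitly so a reader does not worry about degenerate degrees.
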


\section{Background on geometry of numbers}

We need some facts from the geometry of numbers. Recall that a lattice in $\R^n$ is an additive subgroup of $\R^n$
generated by $n$ linearly independent vectors. Take an arbitrary
convex compact and symmetric with respect to $0$ body
$D\subset\R^n$. Recall that, for a lattice  $\Gamma\subset\R^n$
and $i=1,\ldots,n$, the $i$-th successive minimum
$\lambda_i(D,\Gamma)$
of the set $D$ with respect to the lattice $\Gamma$ is defined as
the minimal number $\lambda$ such that the set $\lambda D$ contains
$i$ linearly independent vectors of the lattice $\Gamma$. Obviously,
$\lambda_1(D,\Gamma)\le\ldots\le\lambda_n(D,\Gamma)$. We need the
following result given in~\cite[Proposition~2.1]{BHW} (see
also~\cite[Exercise~3.5.6]{TaoVu} for a simplified form that is
still enough for our purposes).

\begin{lemma}
\label{lem:latp} We have
$$
|D\cap\Gamma|\le \prod_{i=1}^n \(\frac{2i}{\lambda_i(D,\Gamma)} + 1\).
$$
\end{lemma}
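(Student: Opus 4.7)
This is a classical statement in the geometry of numbers; the plan is to follow the textbook proof based on passing from vectors realizing the successive minima to an auxiliary basis of $\Gamma$ in which a lattice point of $D$ has bounded coordinates. Throughout, write $\lambda_i=\lambda_i(D,\Gamma)$ for short.

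\textbf{Step 1: Vectors realizing the successive minima.} By the definition of $\lambda_i$, I can choose linearly independent $w_1,\dots,w_n\in\Gamma$ with $w_i\in\lambda_i D$. These generally do \emph{not} form a $\Z$-basis of $\Gamma$, and that is the first obstacle I have to address before I can count lattice points by coordinates.

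\textbf{Step 2: Passing to an actual basis.} Using a Mahler-type inductive construction (cf.\ Cassels, \emph{Geometry of Numbers}, Ch.\ VIII), I replace $w_1,\dots,w_n$ by a $\Z$-basis $v_1,\dots,v_n$ of $\Gamma$ with the property that, for every $i$, $v_i$ lies in the $\R$-span of $w_1,\dots,w_i$ and $v_i\in i\lambda_i D$. The construction proceeds by an inductive reduction of $w_i$ modulo the sublattice $\Gamma\cap\operatorname{span}(w_1,\dots,w_{i-1})$, and the factor $i$ arises precisely from the fact that, by convexity and central symmetry of $D$, the correction term from the reduction costs a dilate factor at most $(i-1)\lambda_{i-1}\le (i-1)\lambda_i$, so that $v_i\in\lambda_i D+(i-1)\lambda_i D\subset i\lambda_i D$.

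\textbf{Step 3: Coordinate bound and counting.} Given $x\in D\cap\Gamma$, write uniquely $x=m_1v_1+\dots+m_nv_n$ with $m_i\in\Z$. Consider the coordinate functional $\varphi_i\colon\R^n\to\R$ dual to $v_i$ relative to $v_1,\dots,v_n$, so that $\varphi_i(x)=m_i$; by Step 2 the vector $v_i/(i\lambda_i)$ lies in $D$, and central symmetry together with a comparison with the support functional of $D$ gives $|m_i|\le i/\lambda_i$. Hence each $m_i$ takes at most $2\lfloor i/\lambda_i\rfloor+1\le 2i/\lambda_i+1$ integer values, and multiplying over $i=1,\dots,n$ yields the claimed bound.

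\textbf{Main obstacle.} The only nontrivial point is Step 2: the vectors $w_i$ realizing the successive minima can fail to be a $\Z$-basis of $\Gamma$, and one needs the Mahler construction to produce an honest basis while losing only the combinatorial factor $i$. Once that is in hand, Step 3 is a routine application of the symmetry and convexity of $D$.
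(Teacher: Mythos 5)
The paper does not prove this lemma; it is quoted directly from [BHW, Proposition~2.1] (with a pointer to Tao--Vu, Exercise~3.5.6 for a simplified version), so there is no ``paper's proof'' to compare with. I therefore evaluate your argument on its own terms, and it has a genuine gap in Step~3.

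The problem is the inference ``$v_i/(i\lambda_i)\in D$ plus central symmetry gives $\sup_{x\in D}|\varphi_i(x)|\le i/\lambda_i$.'' Knowing that $D$ contains the point $v_i/(i\lambda_i)$ only tells you $\sup_{x\in D}\varphi_i(x)\ge \varphi_i\bigl(v_i/(i\lambda_i)\bigr)=1/(i\lambda_i)$, i.e.\ a \emph{lower} bound on the extent of $D$ in the direction dual to $v_i$; it gives no upper bound on $\varphi_i$ over $D$, which is what you need. Here is a concrete instance where the claimed coordinate bound fails. Let $D$ be the Euclidean unit disk in $\R^2$ and let $\Gamma$ be the lattice generated by $(1,0)$ and $(1/2,1/10)$. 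Then $\lambda_1=1/5$, attained by $w_1=(0,1/5)$, and $\lambda_2=\sqrt{26}/10$, attained by $w_2=(1/2,1/10)$; these two vectors already form a basis, so the Mahler construction gives $v_1=w_1$, $v_2=w_2$, and indeed $v_i\in i\lambda_i D$. For $x=(x_1,x_2)\in D$ written as $x=m_1v_1+m_2v_2$ one computes $m_1=5x_2-x_1$, so
$$
\sup_{x\in D}|m_1|=\sqrt{26}>5=\frac{1}{\lambda_1},
$$
contradicting the bound asserted in Step~3 for $i=1$. Thus the passage from Step~2 to the coordinate bound is not a ``routine application of symmetry and convexity''; it is false as stated. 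To make the strategy work one must exploit the minimality of the successive minima and the lattice structure more seriously (e.g.\ by a quotient/projection argument down the flag $V_0\subset V_1\subset\cdots$ spanned by $w_1,\dots,w_i$), which is precisely where the real content of [BHW, Proposition~2.1] lies. As a secondary remark, the bookkeeping in Step~2 is also off: Mahler's reduction produces $v_i=w_i-\sum_{j<i}c_jv_j$ with $|c_j|\le 1/2$, and since by induction $v_j\in j\lambda_jD$ the correction costs $\sum_{j<i}(j\lambda_j/2)$, which is not $(i-1)\lambda_i$; the standard conclusion is $v_i\in\max(1,i/2)\,\lambda_iD$, and your derivation of the factor $i$ doesn't quite work as written.
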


Denoting, as usual, by $(2n+1)!!$ the product of all odd positive numbers
up to $2n+1$,  we get the following

\begin{cor} \label{cor:latpoints} We have
$$\prod_{i=1}^n \min\{\lambda_i(D,\Gamma),1\} \le \frac{(2n+1)!!}{
|D\cap\Gamma|}.$$
\end{cor}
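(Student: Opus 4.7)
The plan is to derive Corollary \ref{cor:latpoints} directly from Lemma \ref{lem:latp} by a termwise rearrangement. Rewriting the desired inequality as
$$
|D\cap\Gamma|\cdot\prod_{i=1}^n\min\{\lambda_i(D,\Gamma),1\}\le (2n+1)!!,
$$
I would first multiply the bound of Lemma \ref{lem:latp} through by $\prod_{i=1}^n\min\{\lambda_i(D,\Gamma),1\}$, reducing matters to showing that each factor
$$
\left(\frac{2i}{\lambda_i(D,\Gamma)}+1\right)\min\{\lambda_i(D,\Gamma),1\}
$$
is at most $2i+1$.

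The verification is a two-case check on each index $i$. If $\lambda_i(D,\Gamma)\le 1$, then $\min\{\lambda_i(D,\Gamma),1\}=\lambda_i(D,\Gamma)$ and the factor equals $2i+\lambda_i(D,\Gamma)\le 2i+1$. If $\lambda_i(D,\Gamma)>1$, then $\min\{\lambda_i(D,\Gamma),1\}=1$ and the factor equals $\frac{2i}{\lambda_i(D,\Gamma)}+1<2i+1$. Multiplying these bounds over $i=1,\ldots,n$ yields
$$
\prod_{i=1}^n(2i+1)=3\cdot 5\cdots(2n+1)=(2n+1)!!,
$$
which gives the desired conclusion.

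There is essentially no real obstacle here; the corollary is a purely formal consequence of Lemma \ref{lem:latp}, obtained by the elementary rearrangement above. The only mild subtlety is recognizing that introducing the truncation $\min\{\cdot,1\}$ is exactly what is needed to absorb the $\lambda_i(D,\Gamma)$ appearing in the denominator of Lemma \ref{lem:latp}, while simultaneously keeping the bound finite when some successive minimum exceeds $1$.
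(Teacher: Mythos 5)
Your proof is correct and is exactly the intended derivation: the paper states the corollary immediately after Lemma~\ref{lem:latp} without displaying the computation, precisely because it is this termwise rearrangement. The two-case check that $\bigl(\frac{2i}{\lambda_i}+1\bigr)\min\{\lambda_i,1\}\le 2i+1$, followed by taking the product $\prod_{i=1}^n(2i+1)=(2n+1)!!$, is the whole argument.
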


\section{Equations with many variables}

The following lemma is due to Karatsuba~\cite{Kar1}.

\begin{lemma}
\label{lem:Karatsuba}
The following bound holds:
\begin{equation*}
\begin{split}
\Bigl|\Bigl\{(x_1,\ldots,x_{2k})\in [1,N]^{2k}:&\quad \frac{1}{x_1}+\ldots+\frac{1}{x_k}=\frac{1}{x_{k+1}}+\ldots+\frac{1}{x_{2k}}\Bigr\}\Bigr|\\
< &(2k)^{80k^3}(\log N)^{4k^2}N^k.
\end{split}
\end{equation*}
\end{lemma}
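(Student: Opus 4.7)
First I would dyadically decompose each variable: write $[1,N]$ as the union of the dyadic intervals $I_a = (2^{a-1}, 2^a] \cap [1,N]$ for $1 \le a \le L := \lceil \log_2 N \rceil + 1$. This produces $L^{2k} \le (\log N + 2)^{2k}$ configurations of dyadic labels $(a_1,\ldots,a_k,b_1,\ldots,b_k)$ for the tuple $(x_1,\ldots,x_k,x_{k+1},\ldots,x_{2k})$. Within each configuration, after reindexing so that $a_1 \ge \ldots \ge a_k$ and $b_1 \ge \ldots \ge b_k$ (at a combinatorial cost of $(k!)^2$ absorbed into the $(2k)^{80k^3}$ factor), I observe that $\sum_{i=1}^k 1/x_i$ is pinned to within a constant multiple of $1/2^{a_k}$ and likewise for the $y$-side, so the equation admits no solutions unless $|a_k - b_k| = O(\log k)$, eliminating most configurations.

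Next, I would proceed by induction on $k$. The base case $k=1$ is immediate: $1/x_1 = 1/x_2$ forces $x_1 = x_2$, giving $N$ solutions. For the inductive step, fix the two smallest variables $x_k \in I_{a_k}$ and $x_{2k} \in I_{b_k}$; the residual equation becomes
$$
\frac{1}{x_1} + \ldots + \frac{1}{x_{k-1}} - \frac{1}{x_{k+1}} - \ldots - \frac{1}{x_{2k-1}} = \frac{1}{x_{2k}} - \frac{1}{x_k}.
$$
When the right-hand side vanishes (i.e., $x_k = x_{2k}$), the equation reduces to the symmetric $(k{-}1)$-case, handled by induction. When the right-hand side is nonzero, I would invoke the polynomial argument described in the Preliminaries, together with the resultant bound (Lemma~\ref{lem:DeterMagic}) to count integer solutions of this shifted equation. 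This requires strengthening the inductive hypothesis to cover the affine version $\sum 1/x_i - \sum 1/y_j = c$ for arbitrary rationals $c$, which is the natural extension arising in the recursion and admits an analogous bound of the form $(\log N)^{O(k^2)} N^{k-1}$ times a constant depending on $k$.

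The main obstacle I anticipate is the careful bookkeeping required to produce the stated constants $(2k)^{80k^3}$ and $(\log N)^{4k^2}$. Each level of the induction contributes a factor of $(\log N)^{O(k)}$ from the dyadic decomposition and a factor of $(2k)^{O(k^2)}$ from the sorting, polynomial-degree, and resultant estimates; telescoping over the $k$ levels gives the claimed exponents. A further subtlety is that when the dyadic sizes at a given level are much smaller than $N$, the effective range for the integer constraint shrinks, and one must ensure that the inductive bound scales appropriately with the local dyadic parameters (not merely with $N$) so that no spurious factor of $N$ is lost at each step. Absorbing this local scaling into a uniform estimate of the required form is the technical heart of the argument.
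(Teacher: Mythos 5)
The paper does not prove Lemma~\ref{lem:Karatsuba}: it explicitly attributes the bound to Karatsuba and cites~\cite{Kar1}. So the comparison is not with an in-paper argument but with Karatsuba's original proof, which proceeds via the arithmetic (prime-power and divisibility) structure of the denominators $x_1,\ldots,x_{2k}$, not via a dyadic induction of the kind you sketch.

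Your inductive scheme has a scaling problem that is fatal, not a bookkeeping subtlety. Fixing $x_k\in I_{a_k}$ and $x_{2k}\in I_{b_k}$ costs up to $|I_{a_k}||I_{b_k}|\approx 4^{a_k}$, which can be as large as $N^2$; multiplying this by your strengthened affine hypothesis $\lesssim (\log N)^{O(k^2)}N^{k-1}$ for the residual $(2k-2)$-variable equation produces $N^{k+1}$, a full factor of $N$ too big. Your proposed remedy --- making the inductive bound scale with local dyadic parameters --- cannot rescue this: when $a_k$ is near $\log_2 N$ (all variables comparable to $N$), every local parameter is $\approx N$ and the naive product is still $N^{k+1}$. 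Moreover, the strengthened affine hypothesis you want, namely that $\sum 1/x_i-\sum 1/y_j=c$ has $\lesssim N^{k-1}$ solutions in $[1,N]^{2k-2}$ uniformly in $c$, is the best one can hope for: writing $r(\lambda)$ for the number of representations of $\lambda$ as $\sum_{i<k}1/x_i$, the affine count is $\sum_\lambda r(\lambda)r(\lambda-c)$, which by Cauchy--Schwarz is at most $\sum_\lambda r(\lambda)^2$, i.e.\ the symmetric $(2k-2)$-variable count itself. So there is no uniform-in-$c$ gain; to recover the lemma one must exploit correlations among the values $c=1/x_{2k}-1/x_k$ as $(x_k,x_{2k})$ ranges, and the induction as formulated discards exactly that information. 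Already at $k=3$ your recursion outputs $N^4$ rather than the correct $N^3$. Finally, the appeal to the ``polynomial argument described in the Preliminaries'' and to Lemma~\ref{lem:DeterMagic} is misplaced: those are tools for the modular setting (counting solutions of congruences with a free shift $y$, or bounding resultants of integer polynomials arising from lattice reductions); neither supplies a count of integer solutions to the real Diophantine equation you have here. Karatsuba's argument works instead by exploiting the factorizations of the $x_i$ and how large prime powers must be matched across the two sides; none of that multiplicative information enters your sketch, and without it the $N^k$ bound is out of reach by this route.
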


The following elementary statement will be used to exclude some degenerated cases in the proof of Lemma~\ref{lem:symmetricComplexSigma} below.

\begin{lemma}
\label{lem:symmetric}
Let $c\in\C$, $c_1,\ldots,c_{r}\in \C^{*}$, $S$ be a finite subset of $\C$. Let $T_r$ be the number of solutions of the equation
$$
c_1x_1+\ldots+c_rx_r=c,\qquad x_1,\ldots,x_r\in S,
$$
and $J_{2s}$ be the number of solutions of the equation
$$
x_1+\ldots+x_s=x_{s+1}+\ldots+x_{2s}, \qquad x_1,\ldots,x_{2s}\in S.
$$
If $r=2k$ for some integer $k$, then $
T_{2k}\le J_{2k}.$
If $r=2k-1$ for some integer $k$, then $T_{2k-1}^2\le J_{2k-2}J_{2k}.$
\end{lemma}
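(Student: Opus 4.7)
The plan is to prove both inequalities by first reducing the problem from $\C$ to a finite field, and then running a short Cauchy--Schwarz $+$ Plancherel $+$ H\"older chain. Concretely, let $R=\Z[S\cup\{c_1,\ldots,c_r,c\}]\subset\C$, a finitely generated integral domain. By Hilbert's Nullstellensatz over $\Z$, for all but finitely many primes $p$ there is a maximal ideal $\mathfrak m\subset R$ whose residue field $\F_q=R/\mathfrak m$ has characteristic $p$ and such that the quotient map $\phi\colon R\to\F_q$ sends every $c_i$ to a nonzero element, is injective on $S$, and is moreover injective on the finite set of ``relevant differences'' arising in the counts $T_r$ and $J_{2k}$ (so that both counts are preserved under reduction, since the finitely many nonzero elements to avoid lie in only finitely many maximal ideals of $R$). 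It thus suffices to prove the inequalities assuming $S\subset\F_q$ and $c_i\in\F_q^\ast$.

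Next I would split by Cauchy--Schwarz. For $r=2k$, set
$$
A(y)=\#\bigl\{(x_1,\ldots,x_k)\in S^k\colon c_1x_1+\ldots+c_kx_k=y\bigr\}
$$
and let $B(y)$ be the analogous count for the last $k$ variables with weights $c_{k+1},\ldots,c_{2k}$. Since $T_{2k}(c)=\sum_y A(y)B(c-y)$, Cauchy--Schwarz gives $T_{2k}(c)^2\le\|A\|_2^2\,\|B\|_2^2$. For $r=2k-1$, grouping into blocks of sizes $k-1$ and $k$ gives $T_{2k-1}(c)^2\le\|A'\|_2^2\,\|B\|_2^2$, where $A'$ is the count corresponding to $k-1$ summands.

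The crux is then to show that $\|A\|_2^2\le J_{2k}$ and $\|A'\|_2^2\le J_{2k-2}$, i.e.\ that the weighted additive energy is bounded by the unweighted one. Since $A=\mathbf 1_{c_1S}\ast\cdots\ast\mathbf 1_{c_kS}$ on $\F_q$ and $\widehat{\mathbf 1_{c_iS}}(\xi)=\widehat{\mathbf 1_S}(c_i\xi)$, Plancherel gives
$$
\|A\|_2^2=\frac{1}{q}\sum_{\xi\in\F_q}\prod_{i=1}^{k}\bigl|\widehat{\mathbf 1_S}(c_i\xi)\bigr|^{2}.
$$
H\"older's inequality in $\xi$ with $k$ equal exponents $k$ bounds this by $\prod_{i=1}^{k}\bigl(\sum_\xi|\widehat{\mathbf 1_S}(c_i\xi)|^{2k}\bigr)^{1/k}$; since $c_i\in\F_q^\ast$, the substitution $\eta=c_i\xi$ is a bijection of $\F_q$, so every factor equals $\sum_\eta|\widehat{\mathbf 1_S}(\eta)|^{2k}=q\cdot J_{2k}$ by Plancherel applied to $\mathbf 1_S^{\ast k}$. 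Therefore $\|A\|_2^2\le J_{2k}$; the same argument yields $\|B\|_2^2\le J_{2k}$ and $\|A'\|_2^2\le J_{2k-2}$, giving $T_{2k}\le J_{2k}$ and $T_{2k-1}^2\le J_{2k-2}J_{2k}$ respectively.

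The main obstacle is the reduction to $\F_q$ with both counts preserved; once we are inside a finite abelian group on which each $c_i$ acts as an invertible scalar, the remaining Plancherel/H\"older manipulation is entirely routine.
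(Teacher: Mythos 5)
Your proof is correct, and it takes a genuinely different route from the paper. The paper works directly over $\C$ with an entirely elementary argument: it considers all signed choices $(l_1,\ldots,l_{2k},l)$ with $l_i\in\{\pm c_1,\ldots,\pm c_{2k}\}$, $l\in\{0,c\}$, picks a maximizer of the solution count (with a secondary maximization on how many $l_i$ lie in $\{\pm l_1\}$), splits the equation in the middle and applies Cauchy--Schwarz; the maximality forces all coefficients to collapse to $\pm l_1$, so the maximal count is literally $J_{2k}$, which already dominates $T_{2k}$. Your argument instead specializes to a finite field $\F_q$ and then runs the natural Fourier-analytic chain: Cauchy--Schwarz to reduce to the ``dilated energy'' $\|A\|_2^2$ for blocks of sizes $k$ (or $k-1$) and $k$, then Plancherel to write this as $\frac1q\sum_\xi\prod_i|\widehat{\mathbf 1_S}(c_i\xi)|^2$, H\"older with all exponents equal, and the invertibility of each $c_i$ to conclude each factor equals $qJ_{2k}$ (resp.\ $qJ_{2k-2}$). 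The computations check out: $\|A\|_2^2\le\frac1q\cdot qJ_{2k}=J_{2k}$ and likewise for $A'$, giving both inequalities. What the paper's approach buys is self-containedness --- no Nullstellensatz, no characters, nothing beyond Cauchy--Schwarz and a pigeonhole-style maximality trick, and it works in any abelian group containing $S$ and the $c_iS$. What your approach buys is transparency about \emph{why} the inequality holds: it exhibits the statement as the standard fact that additive energy of dilated copies of a set is controlled by the undilated energy. The specialization step is the only technical overhead, and it is fine as sketched (invert the finite product of nonzero ``difference'' elements and the $c_i$ in the finitely generated domain $R$, then take a maximal ideal, whose residue field is finite by the general Nullstellensatz); just note that you need injectivity on differences both for $T_r$ and for $J_{2k}$, $J_{2k-2}$, so that all counts are preserved exactly, not merely bounded in one direction.
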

\begin{proof}
Let $r=2k$. Among all $2k+1$-tuples $(l_1,\ldots,l_{2k}, l)$ with
$$
l_i\in\{\pm c_1,\ldots,\pm c_{2k}\},\quad l\in\{0,c\}
$$
we consider the one for which the number of solutions of the equation
\begin{equation}
\label{eq:symmetcricMaximalL}
l_1x_1+\ldots+l_{2k}x_{2k}=l,\qquad x_1,\ldots,x_{2k}\in S,
\end{equation}
is maximal. There can be several $2k+1$-tuples with this property. We choose the one for which the sequence
$$
l_1,\ldots,l_{2k}
$$
contains the maximal number of elements from $\{-l_1, l_1\}$. We fix one such $(l_1,\ldots,l_{2k}, l)$ with
$$
l_i\in \{-l_1, l_1\},\quad i=1,\ldots,s,
$$
such that either $s=2k$ or $l_t\not\in \{-l_1,l_1\}$ for $t>s$. Denote by  $L_{2k}$ the number of solutions of
\eqref{eq:symmetcricMaximalL}, that is
$$
l_1x_1+\ldots+l_kx_k=l-(l_{k+1}x_{k+1}+\ldots+l_{2k}x_{2k});\qquad x_1,\ldots,x_{2k}\in S.
$$
Note that
$$
L_{2k}=\sum_{\lambda}I_1(\lambda)I_2(\lambda),
$$
where $I_1(\lambda)$ is the number of solutions of the equation
$$
l_1x_1+\ldots+l_kx_k=\lambda,\qquad x_1,\ldots,x_k \in S,
$$
and $I_2(\lambda)$ is the number of solutions of the equation
$$
l-(l_{k+1}x_{k+1}+\ldots+l_{2k}x_{2k})=\lambda;\quad x_{k+1},\ldots,x_{2k}\in S.
$$
Applying the Cauchy-Schwarz inequality, we obtain
$$
L_{2k}^2\le \Bigl(\sum_{\lambda}I_1^2(\lambda)\Bigr)\Bigl(\sum_{\lambda}I_2^2(\lambda)\Bigr).
$$
The quantity in the second parenthesis is equal to the number of solutions of the equation
$$
l_{k+1}x_{1}+\ldots+l_{k}x_{k}=l_{k+1}x_{k+1}+\ldots+l_{2k}x_{2k},\qquad x_1,\ldots,x_{2k}\in S.
$$
Hence, by the maximality of $L_{2k}$ we have
$$
L_{2k}\le \sum_{\lambda}I_1^2(\lambda).
$$
The right hand side indicates the number of solutions of the equation
\begin{equation}
\label{eqn:symmetric l_il_i}
l_1x_1+\ldots+l_kx_k=l_1x_{k+1}+\ldots+l_kx_{2k}, \qquad x_1,\ldots,x_{2k}\in S.
\end{equation}
Clearly, the series
$$
l_1,\ldots,l_k, l_1,\ldots, l_k
$$
contains  $\min\{2s,2k\}$ elements from $\{-l_1,l_1\}$. Hence, by the maximality of $s$ we have $s=2k$. Therefore, $l_i\in \{-l_1,l_1\}$ for all $i$, which implies that the number
of solutions of the equation~\eqref{eqn:symmetric l_il_i} is equal to $J_{2k}$. Thus, $L_{2k}\le J_{2k}$ implying
$T_{2k}\le I_{2k}$. This proves the first statement of the lemma.

To prove the second statement of our lemma, we write the corresponding to $T_{2k-1}$ equation in the form
$$
c_1x_1+\ldots+c_{k}x_{k}=l-(c_{k+1}x_{k+1}+\ldots+c_{2k-1}x_{2k-1})
$$
and, as before, apply the Cauchy-Schwarz inequality to get that
$$
T_{2k-1}^2\le \Bigl(\sum_{\lambda}I_{11}^2(\lambda)\Bigr)\Bigl(\sum_{\lambda}I_{22}^2(\lambda)\Bigr),
$$
where $I_{11}(\lambda)$ is the number of solutions of the equation
$$
c_1x_1+\ldots+c_{k}x_{k}=\lambda,\qquad x_1,\ldots, x_{k} \in S
$$
and $I_{22}(\lambda)$ is the number of solutions of the equation
$$
-(c_{k+1}x_{k+1}+\ldots+c_{2k-1}x_{2k-1})=\lambda,\qquad x_{k+1},\ldots,x_{2k-1}\in S.
$$
Applying the first statement of our lemma we obtain
$$
\sum_{\lambda}I_{11}^2(\lambda)\le J_{2k}; \quad \sum_{\lambda}I_{22}^2(\lambda)\le J_{2k-2},
$$
which finishes the proof of the lemma.
\end{proof}

Let $\xi$ be an algebraic integer of degree $d$ and  $O_\K$ be the ring of integers in $\K=\Q(\xi)$.
In the proof of Lemma~\ref{lem:symmetricComplexSigma} below we use the
language of ideals in the Dedekind domain $O_\K$. We
refer the reader to~\cite[Chapter~12]{IRRO} and~\cite[Chapter~3]{BorShaf} for a background material. Below, all
considered ideals are integral.
In particular, we say that an ideal $I_2$ divides $I_1$ if for some
 ideal $I_3$ we have $I_1=I_2I_3$.

We will use well-known properties of ideals. For instance, if $I_1$ and $I_2$
are  ideals such that $I_1 \subset I_2$ then $I_2$ divides $I_1$
(see, for example,~\cite[Proposition 12.2.7]{IRRO}).

Clearly,  the uniqueness of factorization into prime ideals
implies that if $I_1, I_2, I_3$
are   ideals in $O_\K$ such that $I_3$ divides $I_1 I_2$ then $I_3=J_1 J_2$
for some ideals $J_1$ dividing $I_1$ and $J_2$ dividing $I_2$, respectively.

It is also useful to recall that the number of integral ideals in $O_\K$ of norm
$n$ is at most $\tau(n)^d$, where $\tau$ is the divisor function. In particular,
for fixed constant $d$ and large $n$ this is a quantity of size $n^{o(1)}$.

We recall that the logarithmic height of a nonzero polynomial $P \in
\Z[Z]$ is defined as the maximum
logarithm of the largest (by absolute value) coefficient of $P$.
The logarithmic height of an algebraic number
$\alpha$ is defined as the
logarithmic height of its minimal polynomial.
It is a well-known consequence of basic properties of Mahler's measure that if
$P,Q \in \Z[Z]$ are two univariate non-zero polynomials with $Q\mid P$ and
if $P$ is of logarithmic height at most $H$
then $Q$ is of  logarithmic height at most $H + O(1)$,
where the implied constant depends only on $\deg P$ (see, for example,~\cite[Theorem 4.2.2]{Pras}).

In particular, it follows that if $P\in \Z[Z]$ is a nonconstant polynomial with coefficients
bounded by $M$ (by absolute value), then every root
$\sigma$ of $P(Z)$ can be represented in the form $\xi/q$ where $\xi$ is an algebraic integer
of logarithmic height at most $O(\log M)$ and $q>0$ is an integer with $q<M^{O(1)}$, where the implied constants
depend only on $\deg P.$

\begin{lemma}
\label{lem:symmetricComplexSigma}
For any fixed positive integer constant $r$ and  all values of $\sigma \in \C$
 the number $T_{r}(\sigma,N)$ of solutions
 of the equation
$$
\frac{1}{\sigma+x_1}+\ldots+ \frac{1}{\sigma+x_r}= \frac{1}{\sigma+x_{r+1}}+\ldots+ \frac{1}{\sigma+x_{2r}}
$$
in positive integers $x_1,\ldots,x_{2r}\le N$ satisfies
$$
T_{r}(a,N)<N^{r+o(1)}.
$$
\end{lemma}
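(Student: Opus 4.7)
The plan is to reduce the lemma to the single uniform estimate
$$M(\beta) := \#\Bigl\{(y_1,\ldots,y_r)\in[1,N]^r : \sum_{j=1}^r \tfrac{1}{\sigma+y_j} = \beta\Bigr\} \le N^{o(1)},\qquad \beta\in\mathbb{C},$$
with implicit constant depending only on $r$. Once this is done, the identity $T_r(\sigma,N)=\sum_\beta M(\beta)^2\le \max_\beta M(\beta)\cdot \sum_\beta M(\beta)=\max_\beta M(\beta)\cdot N^r$ immediately yields the desired bound.

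To bound $M(\beta)$, fix any one representation $\vec y^{(0)}$ of $\beta$ and, for any competing $\vec y$, set $A(Y)=\prod_{j\le r}(Y+y_j)$, $A^{(0)}(Y)=\prod_{j\le r}(Y+y^{(0)}_j)\in\Z[Y]$. Equating $A'(\sigma)/A(\sigma)=\beta=A'^{(0)}(\sigma)/A^{(0)}(\sigma)$ produces the Wronskian-type identity
$$G(Y;\vec y,\vec y^{(0)}) := A'(Y)\,A^{(0)}(Y)-A(Y)\,A'^{(0)}(Y) \in \Z[Y],$$
a polynomial of degree at most $2r-2$ in $Y$ vanishing at $Y=\sigma$. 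A partial-fraction computation shows $G\equiv 0$ iff $\{y_j\}=\{y^{(0)}_j\}$ as multisets, contributing at most $r!$ ``diagonal'' representations. For any off-diagonal $\vec y$, the polynomial $G$ is a nonzero element of $\Z[Y]$ having $\sigma$ as a root, so the minimal polynomial $Q$ of $\sigma$ must divide $G$. This immediately disposes of the cases where $\sigma$ is transcendental or algebraic of degree strictly larger than $2r-2$, leaving only the case $\sigma$ algebraic of degree $d\le 2r-2$.

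In this remaining case I invoke the normalization recalled in Section~7, writing $\sigma=\xi/q$ with $\xi$ an algebraic integer in $K=\Q(\sigma)$ of controlled logarithmic height and $q\in\Z_{>0}$. The rescaled elements $z_j:=\xi+q y_j\in O_K$ have absolute norms bounded in terms of $N$ and $r$, and after clearing denominators the condition $G(\sigma;\vec y,\vec y^{(0)})=0$ becomes an equality in $O_K$ which, upon factoring out common parts, constrains the ideal factorizations of the principal ideals $(z_j)$. Using that the number of ideals of norm $n$ in $O_K$ is at most $\tau(n)^d=n^{o(1)}$, combined with Lemma~\ref{lem:DeterMagic} applied to the resultant of $G$ with $Q$, and the successive-minima count of Corollary~\ref{cor:latpoints} applied to the lattice cut out by the ideal constraint, one forces $\vec y$ into $N^{o(1)}$ patterns and obtains $M(\beta)\le N^{o(1)}$.

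The main obstacle is uniformity in $\sigma$: the logarithmic height of the minimal polynomial of $\sigma$ is not bounded a priori, so $q$ and the moduli of the conjugates of $\xi$ may be large, and one must ensure that the lattice, ideal, and resultant bounds depend only on $r$ (hence on $d\le 2r-2$) and not on the height of $Q$. The key observation is that once we pass to $O_K$, the combinatorial structure of $G$ as a polynomial identity in the $z_j$'s depends purely on $r$, and the contributions of the conjugate sizes of $\xi$ enter only through multiplicative factors that are absorbed into the $N^{o(1)}$ term via the Mahler-measure estimates recalled in the background section.
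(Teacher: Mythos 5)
The proposal takes a fundamentally different route from the paper and contains a serious gap at its central step. You reduce the lemma to the uniform fiber estimate $\max_\beta M(\beta)\le N^{o(1)}$, which is a much \emph{stronger} statement than the lemma itself. The lemma only asserts a second-moment bound $T_r=\sum_\beta M(\beta)^2\le N^{r+o(1)}$; this is consistent with (and a priori allows) $\max_\beta M(\beta)$ being as large as $N^{r/2}$. The paper never claims, and its argument never establishes, an $N^{o(1)}$ bound on individual fibers: it bounds the total number of solutions $(x_1,\ldots,x_{2r})$ directly, by induction on $r$ and a two-sided count (the solution count is squeezed between $N^{2r+o(1)}/(\mu_1\cdots\mu_{2r-1})$ from the arithmetic-progression constraints and $\mu_1\cdots\mu_{2r-1}N^{o(1)}$ from the ``norm determines $x_i$'' direction, and one takes a geometric mean).

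The proposed proof of $M(\beta)\le N^{o(1)}$ does not carry through. The Wronskian identity $G(Y;\vec y,\vec y^{(0)})$ vanishing at $\sigma$ is a single polynomial constraint on the $r$-tuple $\vec y$, and for $\sigma$ algebraic of degree $d\le 2r-2$, the condition $Q\mid G$ imposes only $d$ (linear-in-coefficients, but polynomial-in-$\vec y$) conditions. When $d<r$ --- in particular in the critical case of $\sigma$ rational, $d=1$ --- this leaves an $(r-d)$-parameter family, so the algebraic constraint alone cannot reduce the count below $N^{r-d}$; one must exploit the arithmetic/integrality structure. You gesture at this (``the ideal factorizations of the principal ideals $(z_j)$'', ``Lemma~\ref{lem:DeterMagic}'', ``Corollary~\ref{cor:latpoints}''), but the crucial sentence ``one forces $\vec y$ into $N^{o(1)}$ patterns'' is an unsupported leap. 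This is precisely where the paper's proof does real work, and its mechanism (the divisibility $\xi+qx_i\mid q^{2r-1}\prod_{j\ne i}(x_j-x_i)$, ideal factorizations, dyadic pigeonholing of norms, and then the AM--GM of two complementary upper bounds) only yields the second-moment estimate, not a fiber bound. Independently, the proposal drops the induction on $r$ that the paper uses to dispose of solutions with coincident coordinates; that is a smaller issue, but it is also needed.

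To repair the argument, you would have to abandon the reduction to $\max_\beta M(\beta)$ and instead run the two-sided count over all $2r$ variables as in the paper (or else supply a genuine, uniform-in-$\sigma$-and-$\beta$ proof of the fiber bound, which would be a significantly harder and different result than what is required).
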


\begin{proof}
For the brevity denote $T_r=T_r(a,N)$. We shall prove the lemma by induction on $r$. For $r=1$ the result is trivial.
Let $r\ge 2.$

From Lemma~\ref{lem:symmetric} it follows that the number of solutions satisfying $x_i=x_j$ for some $i\not=j$ contributes to $T_r$ a quantity bounded by
$$
O(\sqrt{T_{r-1}T_{r}})
$$
Thus, by the induction hypothesis it suffices to prove that
$$
T'_{r}<N^{r+o(1)},
$$
where $T'_{r}$ denotes the number of solutions with $x_i\not=x_j$ for all $i\not=j$.
We can assume that $T'_{r}>N^{r}$ as otherwise there is nothing to prove.
Rewrite our equation in the form
$$
\prod_{i\not=1}(\sigma+x_i)+\ldots+\prod_{i\not=r}(\sigma+x_{i})=\prod_{i\not=r+1}(\sigma+x_{i})+\ldots+\prod_{i\not=2r}(\sigma+x_{i})
$$
and consider the polynomial $P(Z)$ defined as
$$
\prod_{i\not=1}(Z+x_i)+\ldots+\prod_{i\not=r}(Z+x_{i})-\prod_{i\not=r+1}(Z+x_{i})-\ldots-\prod_{i\not=2r}(Z+x_{i}).
$$
Clearly, $\deg P\le 2r-1$. Note also that $P(-x_1)\not=0$, so $P(Z)$ is not a zero polynomial. Moreover, $P(\sigma)=0$, implying that $P(Z)$ is not a constant polynomial either. Therefore, we may assume that $\sigma$ is an algebraic number of degree $d$ with $1\le d\le 2r-1$ and logarithmic height $O(\log N)$. We can write $\sigma=\xi/q,$
where $\xi$ is an algebraic integer of height $O(\log N)$ and $q$ is an integer with $q=N^{O(1)}$. Then our equation takes the form
\begin{equation}
\label{eqn:symmetrycAlgebraic}
\prod_{i\not=1}(\xi+qx_i)+\ldots+\prod_{i\not=r}(\xi+qx_i)=\prod_{i\not=r+1}(\xi+qx_i)+\ldots+\prod_{i\not=2r}(\xi+qx_i).
\end{equation}

Let $O_\K$ be the ring of integers in $\K=\Q(\xi)$. The idea is to use the observation that for each $i=1,\ldots, 2r$
\begin{equation}
\label{eqn:symmetryc x_i+qx_j divides}
\xi+qx_i \quad {\rm divides}\quad q^{2r-1}\prod_{j\not=i}(x_j-x_i).
\end{equation}
The strategy to evaluate the number of solutions of~\eqref{eqn:symmetrycAlgebraic} is to introduce consequently the variables $x_1,x_2,\ldots$ taking into account congruence conditions that appeared fixing previous variables.

Given an ideal $I$ we denote by $\nu(I)$ its norm. We factor the principal ideal $(\xi+qx_1)$   into factors
$$
(\xi+qx_1)=I_1J_1,
$$
where the prime factors of $I_1$ divide $q$ and $(J_1,q)=1.$ By~\eqref{eqn:symmetryc x_i+qx_j divides}, the norm $\nu(J_1)$ divides $\prod\limits_{j\ge 2}(x_j-x_1)^d$. Hence,
\begin{equation}
\label{eqn:ideal xj=x1 mod rj}
x_j\equiv x_1\pmod {r_j},\quad 2\le j\le 2r,
\end{equation}
for some $r_j\in\Z_{+}$ such that $\nu(J_1)\mid \nu_1^d$ with $\nu_1=\prod\limits_{j\ge 2}r_j$ and $\nu_1\mid \nu(J_1)$.  We restrict $\nu_1$ to dyadic intervals, that is there exists a fixed number $\mu_1\ge 1$ such that if we restrict $\nu_1$ to the size range
\begin{equation}
\label{eqn:size range nu1}
\mu_1\le \nu_1\le 2\mu_1,
\end{equation}
then the number of solutions of our equation with this restriction will be changed by at most $N^{o(1)}$ times.

For every $x_1$ we consider at most $N^{o(1)}$ different cases and in accordance to~\eqref{eqn:ideal xj=x1 mod rj}
specify $x_j,\, j\ge 2,$ to arithmetic progressions $L_{2,j}\in [1,N]$,
where thus
\begin{equation}
\label{eqn:AlgebraSymmProdj>1}
\prod_{j\ge 2}|L_{2,j}|<\frac{N^{2r-1+o(1)}}{\mu_1}.
\end{equation}
At the next step for $x_2\in L_{2,2}$ we factor
$$
(\xi+qx_2)=I_2J_2,
$$
where the prime factors of $I_2$ divide $q$ or $\nu(\xi+qx_1)$ and $J_2$ is coprime to $q$ and $\nu(\xi+qx_1)$. Hence, $J_2$ is coprime to $(x_2-x_1)$
and again by~\eqref{eqn:symmetryc x_i+qx_j divides}, the norm $\nu(J_2)$ divides $\prod\limits_{j\ge 3}(x_j-x_2)^d$. Arguing as before, we find $\nu_2$ such that $\nu(J_2)\mid \nu_2^d$ and $\nu_2\mid \nu(J_2)$.  We can restrict $\nu_2$ to a size range, that is there exists a fixed number $\mu_2\ge 1$ (independent on variables $x_i$) such that if we restrict $\nu_2$ to
\begin{equation}
\label{eqn:size range nu2}
\mu_2\le \nu_2\le 2\mu_2,
\end{equation}
then the number of solutions of our equation with this restriction will be changed by at most $N^{o(1)}$ times. For every $x_2\in L_{2,2}$ we can consider at most $N^{o(1)}$ possibilities and specify
$x_j,\, j\ge 3,$ to arithmetic progressions $L_{3,j}\subset L_{2,j}$ where
\begin{equation}
\label{eqn:AlgebraSymmProdj>2}
\prod_{j\ge 3}|L_{3,j}|\le \frac{1}{\mu_2}\prod_{j\ge 3}|L_{2,j}|,\qquad \nu(J_2)\mid \nu_2^{d}.
\end{equation}
Note indeed that the progression $L_{2,j}$ are defined to some modulus $r_j\mid \nu_1$ and $\gcd(r_j,\nu(J_2))=1$ since $\nu(J_1),\, \nu(J_2)$ are coprime.

At the next step for $x_3\in L_{3,3}$ we factor
$$
(\xi+qx_3)=I_3J_3,
$$
where the prime factors of $I_3$ divides either $(q),\nu(\xi+qx_1)$ or $\nu(\xi+qx_2)$, and $J_3$ is coprime with $(q),\nu(\xi+qx_1)$ and $\nu(\xi+qx_2)$.
We find $\nu_3$ similar to the previous cases and specify it to a size range $\mu_3\le \nu_3\le 2\mu_3$, where $\mu_3$ is independent on variables. The continuation of the process is clear.

We now consequently fix $x_1\le N $, $x_2\in L_{2,2},\ldots,x_{2r}\in L_{2r,2r}$, that is, we fix $x_1$ and considering $N^{o(1)}$ possibilities for arithmetic progressions $L_{2,j},j\ge 3,$ we fix $x_2\in L_{2,2}$, then  considering $N^{o(1)}$ possibilities for arithmetic progressions $L_{3,j}, j\ge 3,$ we fix $x_3\in L_{3,3}$ and iterate this until we fix $x_{2r}\in L_{2r,2r}$. We estimate the number of solutions of~\eqref{eqn:symmetrycAlgebraic} as $N^{o(1)}$ contributions of the form
$$
N|L_{2,2}||L_{3,3}|\ldots|L_{2r,2r}|.
$$
From~\eqref{eqn:AlgebraSymmProdj>1},~\eqref{eqn:AlgebraSymmProdj>2} and iteration we get
\begin{equation*}
\begin{split}
N^{2r-1}&\gtrapprox \mu_1 |L_{2,2}|\prod_{j\ge 3}|L_{2,j}|\\
&\gtrapprox \mu_1\mu_2 |L_{2,2}||L_{3,3}|\prod_{j\ge 4}|L_{3,j}|\\
&\qquad\qquad\ldots\\
&\gtrapprox \mu_1\ldots\mu_{2r-1}|L_{2,2}|\ldots |L_{2r,2r}|.
\end{split}
\end{equation*}
Here $A\gtrapprox B$ means $A>BN^{o(1)}$.
Thus, the number of solutions of~\eqref{eqn:symmetrycAlgebraic} may be bounded by
\begin{equation}
\label{eqn:AlgebraicBounded1}
\frac{N^{2r+o(1)}}{\mu_1\ldots \mu_{2r-1}}.
\end{equation}

Next, returning to our construction, it is clear that $\nu(I_1)=N^{O(1)}$ and since the prime factors of $I_1$ divide $q$,
it follows that the number of possibilities for $I_1$ is at most $N^{o(1)}$. Fixing $I_1$ and denoting
$\xi_1=\xi,\xi_2,\ldots,\xi_d$  the conjugates of $\xi$, we have
$$
\prod_{s=1}^d(\xi_s+qx_1)=\nu(I_1)\nu(J_1)
$$
and since $\nu(J_1)\mid \nu_1^d$ it follows that $\nu(J_1)$ is determined by $\nu_1$ with up to $N^{o(1)}$  possibilities. Thus, given $\nu_1$ we retrieve $x_1$
with up to $N^{o(1)}$ possibilities. It follows that in the size range~\eqref{eqn:size range nu1} the number of possibilities for $x_1$  is at most $N^{o(1)}\mu_1$.
Next, once $x_1$ is given, there are at most $N^{o(1)}$ possibilities for the ideal $I_2$ and similarly $N^{o(1)}\mu_2$ possibilities for $x_2$.
It follows that the number of possibilities for $x_1,x_2,\ldots,x_{2r-1}$ is at most $\mu_1\mu_2\ldots\mu_{2r-1}N^{o(1)}$. Thus, the number of solutions
of~\eqref{eqn:symmetrycAlgebraic} is bounded by $\mu_1\mu_2\ldots\mu_{2r-1}N^{o(1)}$. Since it is also bounded by~\eqref{eqn:AlgebraicBounded1},  the result follows.
\end{proof}

\begin{lemma}
\label{lem:Identity}
Let $x,y,z,a_1,a_2,b_1,b_2$ be complex numbers such that
$$
\left\{\begin{array}{llll}
xyz=a_1(x+y+z)+b_1,\\
xy+yz+zx=a_2(x+y+z)+b_2,
\end{array}
\right.
$$
Then
$$
(x^2-a_2x+a_1)(y^2-a_2y+a_1)(z^2-a_2z+a_1)=(b_1-\alpha_1b_2-\alpha_1^3)(b_1-\alpha_2b_2-\alpha_2^3),
$$
where
$$
\alpha_1=\frac{a_2+\sqrt{a_2^2-4a_1}}{2},\quad \alpha_2=\frac{a_2-\sqrt{a_2^2-4a_1}}{2}.
$$
\end{lemma}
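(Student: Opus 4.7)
My plan is to recognize the left-hand side as a product of values of the cubic $P(T)=(T-x)(T-y)(T-z)$ at the two roots of $T^2-a_2T+a_1$, and then use the two hypotheses to collapse that product into the claimed expression.

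First I would note that $\alpha_1,\alpha_2$ are by construction the roots of $T^{2}-a_2T+a_1=0$, so $\alpha_1+\alpha_2=a_2$, $\alpha_1\alpha_2=a_1$, and
$$
u^{2}-a_2 u+a_1=(u-\alpha_1)(u-\alpha_2)\qquad\text{for every }u.
$$
Applying this with $u=x,y,z$ and regrouping, the left-hand side of the identity factors as
$$
\prod_{u\in\{x,y,z\}}(u-\alpha_1)\;\cdot\;\prod_{u\in\{x,y,z\}}(u-\alpha_2).
$$

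Next, for an arbitrary parameter $\alpha$ I would compute
$$
(x-\alpha)(y-\alpha)(z-\alpha)=-\alpha^{3}+s_1\alpha^{2}-s_2\alpha+s_3,
$$
where $s_1=x+y+z$, $s_2=xy+yz+zx$, $s_3=xyz$ are the elementary symmetric polynomials. Substituting the hypotheses $s_3=a_1 s_1+b_1$ and $s_2=a_2 s_1+b_2$, this becomes
$$
(x-\alpha)(y-\alpha)(z-\alpha)=-\alpha^{3}+s_1(\alpha^{2}-a_2\alpha+a_1)-b_2\alpha+b_1.
$$
The whole point of the choice of $\alpha_1,\alpha_2$ is that the coefficient $\alpha^{2}-a_2\alpha+a_1$ vanishes at $\alpha=\alpha_1$ and $\alpha=\alpha_2$, which conveniently kills the unknown $s_1$.

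Therefore for $i=1,2$,
$$
(x-\alpha_i)(y-\alpha_i)(z-\alpha_i)=b_1-b_2\alpha_i-\alpha_i^{3},
$$
and multiplying the two equations $i=1,2$ yields exactly the right-hand side of the claimed identity. Since no quantity is divided by and no root extraction beyond the formal definition of $\alpha_1,\alpha_2$ is used, the argument is valid over any field of characteristic not $2$, and in particular over $\C$. There is no serious obstacle here: the whole lemma is just the observation that passing from $\{x,y,z\}$ to the two roots $\alpha_1,\alpha_2$ of $T^{2}-a_2T+a_1$ transforms the two hypotheses into the two clean evaluations above.
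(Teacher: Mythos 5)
Your proof is correct and follows exactly the same approach as the paper: both observe that $\alpha_1,\alpha_2$ are the roots of $T^2-a_2T+a_1$, derive the evaluation $(x-\alpha_i)(y-\alpha_i)(z-\alpha_i)=b_1-\alpha_ib_2-\alpha_i^3$ from the two hypotheses and the relation $\alpha_i^2-a_2\alpha_i+a_1=0$, and then multiply the two cases $i=1,2$. The paper's version is simply terser; you supply the intermediate expansion $(x-\alpha)(y-\alpha)(z-\alpha)=-\alpha^3+s_1(\alpha^2-a_2\alpha+a_1)-b_2\alpha+b_1$ explicitly, which makes the cancellation of $s_1$ visible.
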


\begin{proof}
Indeed, since $\alpha_i^2-a_2\alpha_i+a_1=0$, we have
$$
(x-\alpha_i)(y-\alpha_i)(z-\alpha_i)=b_1-\alpha_i b_2-\alpha_i^3,\quad i=1,2.
$$
Multiplying these equalities the claim follows.
\end{proof}

\begin{lemma}
\label{lem:LinearDioph}
Let $A, B$ be integers with $AB\not=0$ and
$|A|, |B|<N^{O(1)}.$
Then the diophantine equation
\begin{equation}
\label{eqn:LinearDioph}
Axy+Bx+By=0
\end{equation}
has at most $N^{o(1)}$ solutions in integers $x,y$ with $|x|,|y|\le N^{O(1)}.$
\end{lemma}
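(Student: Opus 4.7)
The plan is to reduce the equation to a factorization identity and then invoke the standard divisor bound. Multiplying~\eqref{eqn:LinearDioph} by $A$ and adding $B^2$ to both sides gives
$$
A^2xy+ABx+ABy+B^2=B^2,
$$
that is,
$$
(Ax+B)(Ay+B)=B^2.
$$
Since $B\ne 0$ and $|B|<N^{O(1)}$, the right-hand side is a fixed nonzero integer of size at most $N^{O(1)}$.

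Now every integer solution $(x,y)$ of~\eqref{eqn:LinearDioph} yields an ordered factorization $B^2=d_1d_2$ with $d_1=Ax+B$ and $d_2=Ay+B$. The number of such ordered factorizations of $B^2$ is bounded by $\tau(B^2)$, where $\tau$ is the divisor function. Since $|B^2|<N^{O(1)}$, we have $\tau(B^2)=N^{o(1)}$ by the standard bound on the divisor function.

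Finally, given the pair $(d_1,d_2)$, the values $x=(d_1-B)/A$ and $y=(d_2-B)/A$ are uniquely determined (recall $A\ne 0$). Hence the total number of integer solutions $(x,y)$ with $|x|,|y|\le N^{O(1)}$ is at most $\tau(B^2)=N^{o(1)}$. The only thing to check is the factorization identity; the rest is routine, so I expect no real obstacle in writing the proof out.
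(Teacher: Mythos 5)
Your proof is correct and follows exactly the same route as the paper: the factorization $(Ax+B)(Ay+B)=B^2$ followed by the standard divisor bound. The paper states the identity without derivation; you simply spell out the algebra, which is fine.
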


\begin{proof} Indeed, we have
$$
(Ax+B)(Ay+B)=B^2
$$
and the statement follows from the well-known bound for the divisor function.
\end{proof}

\begin{lemma}
\label{lem:dioph 3I*}
Let $a_0,b_0,u_0,v_0$ be integers with $b_0u_0v_0\not=0$ and
$$
|a_0|, |b_0|, |u_0|, |v_0|< N^{O(1)}.
$$
Assume that
$$
\frac{u_0}{v_0}\not\in\Bigl\{\frac{b_0}{a_0+b_0x}:\quad 1\le x\le N\Bigr\}.
$$
Then the number $J$ of solutions of the diophantine equation
\begin{equation*}
\begin{split}
&u_0(a_0+b_0x_1)(a_0+b_0x_2)(a_0+b_0x_3)=v_0b_0\times\\
&\Bigl((a_0+b_0x_1)(a_0+b_0x_2)+(a_0+b_0x_2)(a_0+b_0x_3)+(a_0+b_0x_3)(a_0+b_0x_1)\Bigr)
\end{split}
\end{equation*}
in integers $x_1,x_2,x_3$ with
$$
1\le x_i\le N,\quad a_0+b_0x_i\not=0,
$$
satisfies
$$
J<N^{2/3+o(1)}.
$$
\end{lemma}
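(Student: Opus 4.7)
I first make the affine substitution $y_i:=a_0+b_0x_i$, so that each $y_i$ ranges over the arithmetic progression $Y=\{a_0+b_0x:1\le x\le N\}$ of cardinality $N$ contained in an interval of length $N^{O(1)}$. Dividing through by $y_1y_2y_3$, the given diophantine equation becomes
$$
\frac{1}{y_1}+\frac{1}{y_2}+\frac{1}{y_3}=\lambda, \qquad \lambda:=\frac{u_0}{v_0b_0}\in\Q.
$$
The hypotheses $a_0+b_0x_i\ne 0$ and $u_0/v_0\ne b_0/(a_0+b_0x)$ translate respectively to $y_i\ne 0$ and $\lambda\ne 1/y_i$ for every $y_i\in Y$; these are the two non-degeneracy conditions needed below.

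\textbf{Trivial bound.} Fixing $y_3$ and clearing denominators, the equation reads
$$
(v_0b_0-u_0y_3)\,y_1y_2 + v_0b_0y_3\,y_1 + v_0b_0y_3\,y_2 = 0,
$$
which is of the form $Axy+Bx+By=0$ of Lemma~\ref{lem:LinearDioph}, with $A=v_0b_0-u_0y_3\ne 0$ (by $\lambda\ne 1/y_3$) and $B=v_0b_0y_3\ne 0$ (by $y_3\ne 0$), both of size $N^{O(1)}$. This gives $N^{o(1)}$ pairs $(y_1,y_2)$ per $y_3$, summing to the trivial estimate $J\le N^{1+o(1)}$.

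\textbf{Improvement to $N^{2/3+o(1)}$.} To gain the extra factor $N^{1/3}$ I plan to exploit the three-fold symmetry of the equation by imitating the divisor-peeling argument from the proof of Lemma~\ref{lem:symmetricComplexSigma}. For each coordinate $i$, I rewrite the equation with $y_i$ fixed as $(u_0y_i-v_0b_0)\,y_jy_k=v_0b_0y_i(y_j+y_k)$, and factor the nonzero integer $D_i:=u_0y_i-v_0b_0$ as $D_i=e_ie_i'$ with $e_i\mid v_0b_0y_i$ and $\gcd(e_i',v_0b_0y_i)=1$, forcing $e_i'\mid y_j+y_k$. Dyadically fixing $|e_i'|\asymp M_i$, Corollary~\ref{cor:latpoints} controls the number of $(y_j,y_k)\in Y^2$ lying in the resulting sublattice, while the divisor bound $\tau(D_i)\le N^{o(1)}$ controls the number of $y_i$ producing a given $e_i'$. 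Using all three factorizations simultaneously and optimizing over $M_1,M_2,M_3$ yields $J^3\le N^{2+o(1)}$, i.e.\ $J\le N^{2/3+o(1)}$.

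\textbf{Main obstacle.} The difficulty lies in the combinatorial bookkeeping of the three dyadic scales and verifying that the three simultaneous divisibility constraints balance precisely to the exponent $2/3$; this is the three-variable analogue of the cube-root saving familiar from the determinant method, matching the Heath-Brown bound $O(N^{2/3+\varepsilon})$ for integer points on non-degenerate cubic surfaces. One must also check that the non-degenerate locus $\lambda\ne 1/y$ is exactly where the leading coefficient $v_0b_0-u_0y$ fails to vanish, so that the excluded family in the hypothesis cannot provide anomalously many solutions (indeed the degenerate case $\lambda=1/y_3$ would admit the free family $y_1+y_2=0$, producing $\sim N$ solutions in violation of the asserted bound).
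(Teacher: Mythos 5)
Your setup, the reduction to $1/y_1+1/y_2+1/y_3=\lambda$ with $\lambda=u_0/(v_0b_0)$, the verification of the non-degeneracy conditions, and the trivial bound $J\le N^{1+o(1)}$ via Lemma~\ref{lem:LinearDioph} are all correct and match what the paper does in spirit. But the part that actually matters --- passing from $N^{1}$ to $N^{2/3}$ --- is only announced as a ``plan'' and you explicitly defer the whole balancing argument to a ``main obstacle.'' As written this is not a proof, and more importantly the particular route you sketch has a concrete flaw: you claim that ``the divisor bound $\tau(D_i)\le N^{o(1)}$ controls the number of $y_i$ producing a given $e_i'$.'' That is backwards. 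The divisor bound controls how many choices of $e_i'$ there are \emph{for a fixed} $y_i$ (at most one, in fact, once you insist $e_i'$ is the full part of $D_i$ coprime to $v_0b_0y_i$); it says nothing about how many $y_i\in Y$ give rise to a specified $e_i'$, and that count can be as large as $N/|e_i'|+1$. There is also no a priori lower bound on $|e_i'|$ (all prime factors of $D_i$ could divide $v_0b_0$), so the constraint $e_i'\mid y_j+y_k$ may be vacuous, and one of the two sides of your intended ``balance'' collapses.

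The paper's argument takes a different and cleaner route which you should compare against. It does not factor the $y_i$-dependent quantity $D_i$; it factors the \emph{fixed} integer $w_0=b_0v_0/\gcd(u_0,b_0v_0)$. After reducing to $\gcd(u_0',w_0)=1$ the equation forces $w_0\mid(a_0+b_0x_1)(a_0+b_0x_2)(a_0+b_0x_3)$, hence a factorization $w_0=w_1w_2w_3$ with $w_i\mid a_0+b_0x_i$; by the divisor bound there are only $N^{o(1)}$ such factorizations, so one may fix one with $w_1\ge w_2\ge w_3\ge1$. Then two bounds are played against each other: (i) since $\gcd(b_0,w_1)=1$ (from $\gcd(a_0,b_0)=1$), the congruence $a_0+b_0x_1\equiv0\pmod{w_1}$ pins $x_1$ modulo $w_1$, giving $J\ll N^{1+o(1)}/w_1 + N^{o(1)}$ (the ``$+N^{o(1)}$'' because the other two variables are then recovered via the Lemma~\ref{lem:LinearDioph} step you already did); (ii) writing $a_0+b_0x_i=w_iy_i$, the reduced equation $u_0'y_1y_2y_3=w_1w_2y_1y_2+w_2w_3y_2y_3+w_3w_1y_3y_1$ forces $\min_i|y_i|\le 3w_1^2$, giving $J\ll w_1^2 N^{o(1)}$. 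Taking the minimum and letting $w_1$ vary yields $J\le N^{2/3+o(1)}$ at the balance point $w_1\sim N^{1/3}$. The essential trick is that the factorization is of a \emph{fixed} integer so that pigeonholing over factorizations costs only $N^{o(1)}$, and that $w_1\ge w_2\ge w_3$ gives a two-sided dichotomy (large $w_1$ constrains $x_1$, small $w_1$ constrains some $y_i$). Your sketch lacks both of these ingredients, and in particular lacks the second, reverse-direction bound that makes the balancing possible.
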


\begin{proof}
We can clearly assume that
$$
b_0>0, \quad v_0>0,\quad \gcd(a_0,b_0)=1.
$$
We observe that if one of the variables $x_1,x_2,x_3$ is determined, then for the rest two variables there remain at most $N^{o(1)}$
possibilities. Indeed, let $x_1$ be fixed. Then denoting
$$
X_i=a_0+b_0x_i,\quad A=u_0X_1-v_0b_0, \quad B=-v_0b_0X_1,
$$
we get
$$
AX_2X_3+BX_2+BX_3=0.
$$
By the condition, $AB\not=0$ and $|A|, |B|< N^{O(1)}$. Hence by Lemma~\ref{lem:LinearDioph}, we can retrieve $X_2,X_3$, and thus the numbers $x_2,x_3$, with at most $N^{o(1)}$ possibilities.

Denote
$$
u_0'=\frac{u_0}{\gcd(u_0, b_0v_0)}, \qquad \frac{b_0v_0}{\gcd(u_0, b_0v_0)}=w_0\ge 1.
$$

We have
\begin{equation*}
\begin{split}
&u_0'(a_0+b_0x_1)(a_0+b_0x_2)(a_0+b_0x_3)=w_0\times\\
&\Bigl((a_0+b_0x_1)(a_0+b_0x_2)+(a_0+b_0x_2)(a_0+b_0x_3)+(a_0+b_0x_3)(a_0+b_0x_1)\Bigr)
\end{split}
\end{equation*}

Since $(u_0',w_0)=1,$ there exists a representation
\begin{equation}
\label{eqn:w_0=w_1w_2w_3}
w_0=w_1w_2w_3
\end{equation}
and non-zero integers $y_1,y_2,y_3$ such that
\begin{equation}
\label{eqn:a_0+b_0x_i=w_iy_i}
a_0+b_0x_i=w_iy_i,\quad i=1,2,3.
\end{equation}
In particular,
\begin{equation}
\label{eqn:x_itoy_i}
u_0'y_1y_2y_3=w_1w_2y_1y_2+w_2w_3y_2y_3+w_3w_1y_3y_1.
\end{equation}
We can assume that
$$
w_1\ge w_2\ge w_3\ge 1.
$$
By the bound for the divisor function, the representation~\eqref{eqn:w_0=w_1w_2w_3} implies that there are at most $N^{o(1)}$ possible values for $w_1, w_2, w_3.$ Let us fix one such representation.  Having $w_1,w_2,w_3$ fixed, we observe that the condition $\gcd(a_0,b_0)=1$ and the equality
\begin{equation}
\label{eqn:a_0+b_0x_1=w_1y_1}
a_0+b_0x_1=w_1y_1
\end{equation}
imply that $\gcd(b_0,w_1)=1$. Hence,~\eqref{eqn:a_0+b_0x_1=w_1y_1} uniquely determines $x_1\pmod {w_1}$. It then follows  that there are at most
$$
N^{1+o(1)}w_1^{-1}+1
$$
possible values for $x_1$. Then we retrieve $x_2,x_3$ and get the bound
\begin{equation}
\label{eqn:J and 1/w_1}
J<N^{1+o(1)}w_1^{-1}+N^{o(1)}.
\end{equation}

Next, from~\eqref{eqn:x_itoy_i} and $w_1\ge w_2\ge w_3\ge 1$ we have
$$
|u_0'|\min\{|y_1|,|y_2|,|y_3|\}\le 3 w_1^2.
$$
Thus,
$$
\min\{|y_1|,|y_2|,|y_3|\}\le 3w_1^{2}.
$$
Hence, we can determine one of $y_1,y_2,y_3$ with $O(w_1^2)$ possibilities. Consequently, by~\eqref{eqn:a_0+b_0x_i=w_iy_i} we determine one of $x_1,x_2,x_3$ and thus we get
$$
J<w_1^2N^{o(1)}.
$$
Comparing this with~\eqref{eqn:J and 1/w_1}, we conclude $J<N^{2/3+o(1)}$.

\end{proof}

\section{Congruences}

In what follows, $N$ is a large parameter, $N<p$. We start with the following result from~\cite{CillGar} which is based on the idea of Heath-Brown~\cite{HB}.
\begin{lemma}
\label{lem:CillGar}
Let $\lambda\not\equiv 0\pmod p$. Then the number $J$ of solutions of the congruence
$$
xy\equiv \lambda\pmod p,\qquad L+1\le x, y\le L+N,
$$
satisfies
$$
J< \frac{N^{3/2+o(1)}}{p^{1/2}}+N^{o(1)}.
$$
In particular, if $N<p^{1/3}$, then one has $J<N^{o(1)}$.
\end{lemma}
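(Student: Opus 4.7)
The plan is to follow the Heath--Brown lattice method for points on modular hyperbolas, as adapted in \cite{CillGar,HB}. Set $S = \{(x,y) \in [L+1, L+N]^2 : xy \equiv \lambda \pmod p\}$ and $J = |S|$. If $J \le 1$ the bound is immediate, so I would suppose $J \ge 2$ and pick a reference solution $(x_0, y_0) \in S$.

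For every other $(x, y) \in S$, setting $(u, v) = (x - x_0, y - y_0)$, the congruence $xy \equiv \lambda \pmod p$ rewrites as
$$
y_0 u + x_0 v + uv \equiv 0 \pmod p, \qquad |u|, |v| \le N-1,
$$
i.e., $(x_0+u)(y_0+v) = x_0 y_0 + kp$ for a unique integer $k$ with $|k| \le 2(L+N)N/p + 1$. For each such $k$ the number of factorizations $x_0 y_0 + kp = xy$ in $[L+1, L+N]^2$ is at most $\tau(x_0 y_0 + kp) \le N^{o(1)}$. Summing over $k$ yields the preliminary estimate $J \ll ((L+N)N/p + 1)\, N^{o(1)}$, which is sharp when $L \ll N$ but deteriorates to $N^{1+o(1)}$ when $L$ is of order $p$.

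To handle the remaining range of $L$, I would pass to the geometry of numbers. Introduce the lattice $\Lambda = \{(u,v) \in \Z^2 : y_0 u + x_0 v \equiv 0 \pmod p\}$ of determinant $p$, and stratify the solutions $(u,v)$ of the nonlinear congruence by the integer value $c = -uv$: each stratum consists of points of a single coset of $\Lambda$ inside the box $[-N, N]^2$. Lemma~\ref{lem:latp}, combined with Minkowski's theorem (so that the successive minima of $\Lambda$ with respect to the unit box satisfy $\lambda_1 \lambda_2 \asymp p$), bounds the number of coset points by $\ll (1+N/\lambda_1)(1+N/\lambda_2)$ per stratum. A Cauchy--Schwarz step balancing this per-coset count against the number of admissible values $c \in [-N^2, N^2]$ (their multiplicities controlled by the divisor bound $\tau(|c|) \le N^{o(1)}$) then produces the target estimate $J \ll N^{3/2+o(1)}/p^{1/2} + N^{o(1)}$.

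The main obstacle is the nonlinear term $uv$ in the congruence, which prevents a direct lattice count and forces the coset stratification; the divisor-type contribution from enumerating the cosets exactly cancels against a factor of $\lambda_1\lambda_2 \asymp p$ in the lattice count to produce the $(N^3/p)^{1/2} = N^{3/2}/p^{1/2}$ scaling. The additive term $N^{o(1)}$ in the final bound absorbs the diagonal contributions $u=0$ or $v=0$, where the congruence linearizes trivially and the number of solutions in $[L+1,L+N]$ is $O(1)$.
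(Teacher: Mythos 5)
The paper does not prove this lemma; it is quoted directly from~\cite{CillGar}. So there is no in-paper proof to compare with, and your proposal has to be assessed on its own terms. Doing so, there is a genuine gap.

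Your preliminary step is fine and in fact a little stronger than you credit it: writing $xy=\lambda+kp$ and summing divisor bounds over $k$ gives $J\ll\bigl((L+N)N/p+1\bigr)(L+N)^{o(1)}$, which already yields the target $N^{3/2+o(1)}/p^{1/2}+N^{o(1)}$ whenever $L\ll p^{1/2}N^{1/2}$, not merely when $L\ll N$. (One subtlety: the divisor bound on $x_0y_0+kp\sim p^2$ gives $p^{o(1)}$, not $N^{o(1)}$; this is harmless in the intended range $N=p^{\Theta(1)}$ but should be flagged.) The real difficulty is the complementary range $L\gg p^{1/2}N^{1/2}$, where this simple count only gives $J\ll N^{1+o(1)}$ and one must save another factor of about $(N/p^{1/2})^{1/2}$.

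Your second step is supposed to supply that saving, but it does not. If one makes the ``Cauchy--Schwarz balancing'' precise in the natural ways, it fails. Writing $A_c=|\Lambda_c\cap[-N,N]^2|$ for the coset count and $B_c$ for the number of $(u,v)$ with $uv=-c$, the best one has is $J\le\sum_c\min(A_c,B_c)$; bounding this by $\bigl(\sum_cA_c\bigr)^{1/2}\bigl(\sum_cB_c\bigr)^{1/2}$ gives nothing better than $O(N^2)$, since $\sum_cB_c\asymp N^2$ and the number of admissible cosets can also be of order $N^2$. Bounding it by $N^{o(1)}\cdot\#\{c:A_c\geq 1\}$ is equally trivial. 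More fundamentally, a plain lattice count in the original $(u,v)$ coordinates (with the \emph{nonlinear} congruence $y_0u+x_0v+uv\equiv 0$) with body $[-N,N]^2$ does not see the right geometry: the Cilleruelo--Garaev argument passes to the symmetric coordinates $(s,t)=(a+b,ab)$, where the congruence becomes linear, $Ls+t\equiv\mu\pmod p$, and works with the lattice $\{(s,t):Ls+t\equiv 0\pmod p\}$ against the rectangular body $[-2N,2N]\times[-N^2,N^2]$ --- the same setup used in Lemma~\ref{lem:3I* small |I|} of this paper for the ternary analogue. Crucially, in the case where that lattice has short vectors, the raw lattice count gives only $O(N^3/p)$, which for $p^{1/3}<N<p^{1/2}$ exceeds the target $N^{3/2}/p^{1/2}$; what saves the day is the additional arithmetic constraint that $(s,t)$ must be of the form $(a+b,ab)$, i.e.\ that the discriminant $s^2-4t$ must be a perfect square. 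That forces a quadratic polynomial (with discriminant $\ll N^4$, hence $N^{o(1)}$ divisors) to take square values, and gives $N^{o(1)}$ solutions in the short-vector case. Your proposal contains no trace of this discriminant/perfect-square mechanism, which is the essential ingredient. As written, the argument does not close.
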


\begin{corollary}
\label{cor:CillGar}
Let $\lambda\not\equiv 0\pmod p$. Then the number $J$ of solutions of the congruence
$$
\frac{1}{x}+\frac{1}{y}\equiv \lambda\pmod p,\qquad L+1\le x, y\le L+N,
$$
satisfies
$$
J< \frac{N^{3/2+o(1)}}{p^{1/2}}+N^{o(1)}.
$$
In particular, if $N<p^{1/3}$, then one has $J<N^{o(1)}$.
\end{corollary}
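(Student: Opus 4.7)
The plan is to reduce Corollary~\ref{cor:CillGar} directly to Lemma~\ref{lem:CillGar} via an elementary change of variable that converts the additive congruence in $1/x, 1/y$ into a multiplicative congruence in shifted variables. The starting point is to clear denominators: the congruence $\frac{1}{x}+\frac{1}{y}\equiv\lambda\pmod p$ is equivalent to $xy\equiv \lambda^{-1}(x+y)\pmod p$ (valid since $\lambda$ is invertible). Adding $\lambda^{-2}$ to both sides and factoring yields the identity
$$
(x-\lambda^{-1})(y-\lambda^{-1})\equiv \lambda^{-2}\pmod p,
$$
which is the key algebraic observation.

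Next, I would fix an integer $\mu\in [0,p-1]$ representing $\lambda^{-1}\pmod p$ and introduce the shifted variables $x'=x-\mu$, $y'=y-\mu$. As $x, y$ run through $[L+1,L+N]$, the pair $(x',y')$ runs bijectively through the translated interval $[L+1-\mu,L+N-\mu]$, which has the same length $N$, and our congruence becomes
$$
x'y'\equiv \mu^2\pmod p.
$$
Since $\lambda\not\equiv 0$ forces $\mu^2\not\equiv 0\pmod p$, Lemma~\ref{lem:CillGar} applies to this multiplicative congruence over an interval of length $N$, yielding
$$
J<\frac{N^{3/2+o(1)}}{p^{1/2}}+N^{o(1)},
$$
which is the desired bound.

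The only step that needs a moment of care is that the translated interval $[L+1-\mu,L+N-\mu]$ may contain non-positive integers when $L<\mu$. This is not a real obstacle, since adding a suitable multiple of $p$ to every element of the interval moves it into the positive integers without changing any residue modulo $p$, and Lemma~\ref{lem:CillGar} depends only on the length of the interval and on $\mu^2$ being coprime to $p$. I do not anticipate any substantive difficulty: the content of the corollary is simply the observation that the additive congruence in reciprocals is equivalent, under a rigid translation of $\F_p$, to a hyperbola-type congruence already controlled by Lemma~\ref{lem:CillGar}.
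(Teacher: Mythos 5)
Your proof is correct and is exactly the paper's argument: the paper's one-line proof is precisely the factorization $(x-\lambda^{-1})(y-\lambda^{-1})\equiv\lambda^{-2}\pmod p$ followed by an appeal to Lemma~\ref{lem:CillGar}. Your extra remark about possibly negative shifted intervals is unnecessary, since Lemma~\ref{lem:CillGar} is stated for an arbitrary integer $L$ (positive or not) and depends only on the interval length $N$, but including it does no harm.
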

\begin{proof}
Indeed, we have
$$
(x-\lambda^{-1})(y-\lambda^{-1})\equiv\lambda^{-2}\pmod p
$$
and the claim follows from Lemma~\ref{lem:CillGar}.
\end{proof}

The following result has been proved in~\cite{CillGar} (see also~\cite{BGKS1} for the extension of~\cite{CillGar}
to higher dimensional case).
\begin{lemma}
\label{lem:CillGarBGKS}
Let $\lambda\not\equiv 0\pmod p$ and $N<p^{1/8}$. Then the number $J$ of solutions of the congruence
$$
xyz\equiv \lambda\pmod p,\qquad L+1\le x, y\le L+N,
$$
satisfies
$$
J< N^{o(1)}.
$$
\end{lemma}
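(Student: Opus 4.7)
The plan is to adapt the Heath-Brown determinant method of Lemma~\ref{lem:CillGar} to the three-variable setting, following the higher-dimensional extension in~\cite{BGKS1}, and combining the geometry-of-numbers bound Lemma~\ref{lem:latp} with the resultant estimate Lemma~\ref{lem:DeterMagic}. First I would write $x=L+u$, $y=L+v$, $z=L+w$ with $1\le u,v,w\le N$ and expand $(L+u)(L+v)(L+w)\equiv\lambda\pmod p$ in the elementary symmetric polynomials $\sigma_1,\sigma_2,\sigma_3$ of $u,v,w$, reducing the problem to the linear congruence
$$
L^2\sigma_1+L\sigma_2+\sigma_3\equiv\lambda-L^3\pmod p.
$$
Since the symmetric-polynomial map is at most $6$-to-$1$ on unordered triples, up to a factor of $6$ it suffices to count admissible triples $(\sigma_1,\sigma_2,\sigma_3)\in\Z^3$ satisfying this congruence with $|\sigma_i|\ll N^i$.

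Next, differences of admissible triples lie in the rank-three lattice $\Gamma=\{(a,b,c)\in\Z^3:\, L^2 a+L b+c\equiv 0\pmod p\}$ of covolume $p$, inside a highly anisotropic box $D$ with sides $\asymp N,\,N^2,\,N^3$, so $\mathrm{vol}(D)\asymp N^6$. Applying Lemma~\ref{lem:latp}, the count is controlled by the successive minima $\lambda_1\le\lambda_2\le\lambda_3$ of $D$ with respect to $\Gamma$, which satisfy $\lambda_1\lambda_2\lambda_3\asymp p/N^6$ by Minkowski's second theorem. In particular, for $N<p^{1/6}$ the product is $\gg 1$, and in the \emph{generic} case $\lambda_1\ge 1$ one immediately obtains $|\Gamma\cap D|\ll 1$, which is stronger than what is needed.

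The delicate case is when $\Gamma$ has unexpectedly short vectors relative to $D$ (some $\lambda_i<1$), which forces many admissible triples to align along an arithmetic progression in symmetric-polynomial coordinates. Such alignment translates, via Lemma~\ref{lem:DeterMagic}, into a nontrivial polynomial identity that the underlying triples $(u,v,w)$ must satisfy; one then shows that the corresponding Diophantine system has at most $N^{o(1)}$ solutions in $[1,N]^3$ provided $N$ is small enough, and the hypothesis $N<p^{1/8}$ is precisely what rules out the pathological configurations. Handling these small-minima cases is the principal obstacle, and it is where the threshold $1/8$ emerges, from the interplay between the three widely different scales $N,\,N^2,\,N^3$ of $D$ and the resultant-based Diophantine input needed to dispose of the aligned configurations.
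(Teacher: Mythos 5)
The paper does not give its own proof of this lemma --- it is quoted verbatim from the cited work, with the remark that it ``has been proved in \cite{CillGar} (see also \cite{BGKS1} for the extension of \cite{CillGar} to higher dimensional case).'' So there is no internal proof against which to compare. Evaluating your proposal on its own merits: the set-up is sound but the argument is not actually completed.

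Your reduction to symmetric polynomials is right: writing $x=L+u$, etc., the congruence becomes $L^2\sigma_1+L\sigma_2+\sigma_3\equiv\lambda-L^3\pmod p$, and differences of admissible $\sigma$-triples land in the lattice $\Gamma=\{(a,b,c):L^2a+Lb+c\equiv 0\pmod p\}$ of covolume $p$, inside a box of volume $\asymp N^6$. If the first successive minimum satisfies $\lambda_1\ge 1$, Lemma~\ref{lem:latp} gives $|\Gamma\cap D|\ll 1$, and since the passage from $(u,v,w)$ to $(\sigma_1,\sigma_2,\sigma_3)$ is at most $6$-to-$1$, the count of triples is $O(1)$. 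All of that is correct. But the hypothesis $N<p^{1/8}$ is strictly stronger than the $N<p^{1/6}$ that the volume heuristic asks for, which already signals that the small-minima configurations are the whole content of the lemma, not a loose end.

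And that is exactly where your write-up stops. The sentence ``one then shows that the corresponding Diophantine system has at most $N^{o(1)}$ solutions in $[1,N]^3$ provided $N$ is small enough'' is an assertion, not a proof, and it is precisely the step where the exponent $1/8$ has to be earned. Concretely: if $\lambda_1\le 1<\lambda_2$, the admissible $\sigma$-triples lie on a one-dimensional arithmetic progression generated by one short vector $(a_1,b_1,c_1)\in\Gamma$; if $\lambda_2\le 1<\lambda_3$, they lie on a two-dimensional affine piece. In either case one must exploit the constraint that $Z^3-\sigma_1Z^2+\sigma_2Z-\sigma_3$ splits into three linear factors with roots in $[1,N]$, and convert this into an integer Diophantine equation with few solutions (typically by forcing some small polynomial resultant to vanish, as in Lemma~\ref{lem:DeterMagic}, and then falling back on a divisor-count). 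Your appeal to Lemma~\ref{lem:DeterMagic} gestures at this, but you never exhibit the two polynomials whose resultant you take, nor verify that the resulting bound $N^{\alpha}<p$ is what produces $N<p^{1/8}$ rather than some other exponent. Compare the fully worked proof of Lemma~\ref{lem:3I* small |I|} in the paper, which is exactly this kind of three-case analysis ($\lambda_3\le 1$; $\lambda_1\le 1<\lambda_2$; $\lambda_1,\lambda_2\le 1<\lambda_3$) carried all the way through, with explicit determinants, an explicit algebraic identity (Lemma~\ref{lem:Identity}) for the rank-one case, and an explicit resultant computation for the rank-two case. Your proposal needs an analogue of each of those steps before it constitutes a proof of Lemma~\ref{lem:CillGarBGKS}.
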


The following lemma follows from the work of Ayyad, Cochrane and Zheng~\cite{ACZ} (and from Lemma~\ref{lem:CillGar} when $|I_1||I_2|$ is very small).
In fact, we shall only apply this lemma when one of the intervals starts from the origin, the result which had previously been established by Friedlander and Iwaniec~\cite{FrIw0}.

\begin{lemma}
\label{lem:multEnergyTwoInt}
Let $I_1, I_2$ be two intervals in $\F_p^*$ with
$$
|I_1||I_2|<p.
$$
Then the number of solutions of the congruence
$$
xy=zt,\quad (x,z)\in I_1\times I_1,\quad (y,t)\in I_2\times I_2,
$$
is not greater than $(|I_1||I_2|)^{1+o(1)}$.
\end{lemma}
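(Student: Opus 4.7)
I would follow the classical linearization approach of Ayyad, Cochrane and Zheng (and the Friedlander--Iwaniec argument in the case one interval starts at the origin). Write $N_i=|I_i|$ and denote by $T$ the number of solutions. The key substitution is $a=x-z$ and $b=t-y$, under which
$$
xy-zt=(z+a)y-z(y+b)=ay-bz,
$$
so the congruence $xy\equiv zt\pmod p$ becomes $ay\equiv bz\pmod p$ with $|a|\le N_1-1$ and $|b|\le N_2-1$. Given $(a,b)$, the quadruple $(x,z,y,t)$ is recovered from $(z,y)\in (I_1\cap(I_1-a))\times(I_2\cap(I_2-b))$, so
$$
T=\sum_{a,b} N(a,b),\quad N(a,b):=\#\bigl\{(z,y)\in(I_1\cap(I_1-a))\times(I_2\cap(I_2-b)):\ ay\equiv bz\!\!\!\pmod p\bigr\}.
$$

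The diagonal $(a,b)=(0,0)$ contributes exactly $N_1 N_2$. If $a=0$ but $b\neq 0$ (or symmetrically), the congruence forces $z\equiv 0\pmod p$ (using $|b|<p$), which is impossible for $z\in I_1\subset\F_p^*$; these cases contribute nothing. For $ab\neq 0$, I would lift to integer representatives, write $z=z_0+u$ and $y=y_0+v$ with $u,v$ in intervals of lengths at most $N_1,N_2$ contained in $[0,N_1]\times[0,N_2]$, and rephrase the congruence as
$$
av-bu\equiv C(a,b)\pmod p,\qquad C(a,b)=bz_0-ay_0\in\Z.
$$
Crucially, $|av-bu|\le |a|N_2+|b|N_1<2N_1N_2<2p$ by the hypothesis, so $av-bu$ can take at most $O(1)$ integer values in any fixed residue class modulo $p$. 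For each such integer value $K$, set $d=\gcd(|a|,|b|)$, $a=\pm d a_1$, $b=\pm d b_1$ with $\gcd(a_1,b_1)=1$; the integer solutions of $av-bu=K$ form, if any, a single arithmetic progression with step $(a_1,b_1)$, and the number of them with $u\in[0,N_1]$, $v\in[0,N_2]$ is at most
$$
\min\!\Bigl(\frac{N_1}{|a_1|},\frac{N_2}{|b_1|}\Bigr)+1\;\le\;\sqrt{\frac{N_1N_2}{a_1 b_1}}+1.
$$

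Summing over all $(a,b)$ with $ab\neq 0$, grouped by $d$, and using $\sum_{1\le a_1\le X}a_1^{-1/2}\ll\sqrt{X}$,
$$
\sum_{ab\neq 0}N(a,b)\ll\sum_{d\ge 1}\sum_{\substack{1\le a_1\le N_1/d\\ 1\le b_1\le N_2/d}}\Bigl(\sqrt{\frac{N_1N_2}{a_1 b_1}}+1\Bigr)\ll N_1N_2\sum_{d\ge 1}\frac{1}{d}+N_1N_2\sum_{d\ge 1}\frac{1}{d^2}\ll N_1N_2\log(N_1N_2).
$$
Combined with the diagonal, this yields $T\ll N_1N_2\log(N_1N_2)=(N_1N_2)^{1+o(1)}$, as required.

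The main obstacle is to avoid overshooting the bound: the naive estimate $\sum_{a,b}\min(N_1/|a|,N_2/|b|)$ without exploiting the coprime progression step $(a_1,b_1)$ would give $(N_1N_2)^{3/2}$, and the naive count of residue classes without the hypothesis $N_1N_2<p$ would produce an extra factor comparable to $\max(1,N_1N_2/p)$. The hypothesis is used precisely to collapse the congruence $av-bu\equiv C(a,b)\pmod p$ into $O(1)$ integer equations, and the coprimality $\gcd(a_1,b_1)=1$ is what saves the final $d$-summation.
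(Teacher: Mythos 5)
The paper does not actually prove Lemma~\ref{lem:multEnergyTwoInt}: it merely cites Ayyad--Cochrane--Zheng~\cite{ACZ} for the general statement and Friedlander--Iwaniec~\cite{FrIw0} for the case where one interval starts at the origin, noting also that Lemma~\ref{lem:CillGar} recovers the small-$|I_1||I_2|$ regime. Your proposal supplies a correct self-contained proof along the ACZ linearization route that the paper points to, so in spirit it is the same argument, just written out.

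The essential steps all check: the substitution $a=x-z$, $b=t-y$ turns $xy\equiv zt$ into the linear congruence $ay\equiv bz\pmod p$ with $|a|<|I_1|$, $|b|<|I_2|$; the diagonal contributes exactly $|I_1||I_2|$, and the cases $a=0\ne b$ (or vice versa) vanish because $z,y\in\F_p^*$; for $ab\ne 0$ you correctly exploit $|av-bu|<2|I_1||I_2|<2p$ to reduce the congruence to $O(1)$ integer equations, correctly identify the solution set of $av-bu=K$ in $[0,N_1]\times[0,N_2]$ as a progression with $u$-step $a_1=|a|/d$ and $v$-step $b_1=|b|/d$ (where $d=\gcd(|a|,|b|)$), and then the AM--GM bound $\min(N_1/a_1,N_2/b_1)\le\sqrt{N_1N_2/(a_1b_1)}$ together with the $d$-grouping yields $\ll N_1N_2\log(N_1N_2)$ after summing $a_1^{-1/2}$ and $b_1^{-1/2}$ and then $d^{-1}$ and $d^{-2}$. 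Two minor remarks, neither of which affects correctness: the sum $\sum_{d\ge1}d^{-1}$ should be understood as running only to $d\le\min(N_1,N_2)$ (where the inner sums are nonempty), giving the $\log$ rather than a divergence; and dropping the coprimality condition $\gcd(a_1,b_1)=1$ when passing to the final double sum is legitimate only because you retain the summand $\sqrt{N_1N_2/(a_1b_1)}+1$, which is a valid upper bound for $N(a,b)$ precisely when $(a_1,b_1)$ are the coprime parts of $(a,b)$ --- worth stating explicitly that you are enlarging the index set rather than reparametrizing.
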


The following lemma will be used in the proof of Theorems~\ref{thm:3I* small |I|} and~\ref{thm:3I*=3I* small |I|}. It is given
with explicit constants to make the statement more transparent, the reader should not take them
seriously.

\begin{lemma}
\label{lem:3I* small |I|} Let $I=\{a+1,\ldots, a+N\}$ and
$$
\lambda\not\equiv 0\pmod p,\quad \lambda\not\in \{x^{-1}\pmod p:\quad x\in I\}.
$$
Assume that
$$
|I|=N<0.1p^{1/18}J^{2/9}
$$
where $J$ is the number of solutions of the congruence
$$
x^{-1}+y^{-1}+z^{-1}\equiv \lambda\pmod p,\quad x,y,z\in I.
$$
Let also $J>N^{\varepsilon}$ for some fixed small constant $\varepsilon>0$ and let $N$ be sufficiently large.
Then there exist integers $\Delta_4',\Delta_4{''},\Delta_3$ with
$$
|\Delta_4'|<10^5 N^4/J, \quad |\Delta_4{''}|<10^5 N^4/J,\quad |\Delta_3|<10^5 N^3/J
$$
such that
$$
a\equiv \frac{\Delta_4'}{\Delta_3}\pmod p;\quad  \lambda^{-1}\equiv \frac{\Delta_4^{''}}{\Delta_3}\pmod p.
$$
\end{lemma}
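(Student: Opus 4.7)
The plan is to translate the congruence into a lattice-point counting problem via elementary symmetric functions, then extract short lattice vectors whose coordinates encode the desired rational representations of $a$ and $\lambda^{-1}$. Writing $x=a+x_0$, $y=a+y_0$, $z=a+z_0$ with $x_0,y_0,z_0\in\{1,\dots,N\}$ and clearing denominators, the equation $x^{-1}+y^{-1}+z^{-1}\equiv\lambda\pmod p$ becomes the affine congruence
$$(\lambda a^2-2a)s_1+(\lambda a-1)s_2+\lambda s_3\equiv 3a^2-\lambda a^3\pmod p,$$
where $s_j=e_j(x_0,y_0,z_0)$ are the elementary symmetric polynomials of the shifts. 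The hypothesis $\lambda\notin I^{-1}\cup\{0\}$ ensures (by a direct count) that solutions with repeated coordinates contribute at most $O(N)\ll J$. Hence at least $\gtrsim J$ distinct triples $(s_1,s_2,s_3)$ fit in the box $[0,3N]\times[0,3N^2]\times[0,N^3]$ and lie in a single coset of the sublattice
$$\Gamma=\{(u_1,u_2,u_3)\in\Z^3:(\lambda a^2-2a)u_1+(\lambda a-1)u_2+\lambda u_3\equiv 0\pmod p\}$$
of determinant $p$ (using $\lambda\not\equiv 0\pmod p$).

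The next step applies the geometry of numbers. Taking differences with a fixed reference triple gives $\gtrsim J$ lattice points of $\Gamma$ in the symmetric box $\tilde D=[-3N,3N]\times[-3N^2,3N^2]\times[-N^3,N^3]$ of volume $\asymp N^6$. Corollary~\ref{cor:latpoints} yields
$$\prod_{i=1}^3\min\{\lambda_i(\tilde D,\Gamma),1\}\lesssim\frac{1}{J},$$
where $\lambda_i(\tilde D,\Gamma)$ are the successive minima. Combined with Minkowski's second theorem ($\lambda_1\lambda_2\lambda_3\asymp p/N^6$) and the hypothesis $N<0.1\,p^{1/18}J^{2/9}$, the case $\lambda_3\le 1$ would force $J\gtrsim N^4$, contradicting $J\le N^3$; hence $\lambda_3>1$. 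In the main case $\lambda_2\le 1$ one obtains $\lambda_1\lambda_2\lesssim 1/J$ and two linearly independent short vectors $\mathbf t=(t_1,t_2,t_3)$, $\mathbf t'=(t_1',t_2',t_3')\in\Gamma$ whose $2\times 2$ minors $M_1=t_2t_3'-t_3t_2'$, $M_2=t_3t_1'-t_1t_3'$, $M_3=t_1t_2'-t_2t_1'$ satisfy $|M_1|\lesssim N^5/J$, $|M_2|\lesssim N^4/J$, $|M_3|\lesssim N^3/J$.

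From the two lattice relations, $(A,B,C):=(\lambda a^2-2a,\lambda a-1,\lambda)$ is orthogonal modulo $p$ to both $\mathbf t$ and $\mathbf t'$, hence proportional to $(M_1,M_2,M_3)$ modulo $p$. With the proportionality constant $c\equiv\lambda M_3^{-1}\pmod p$ determined by the third coordinate, the first two coordinates give
$$\lambda^{-1}M_3\equiv aM_3-M_2\pmod p,\qquad a^2M_3-2aM_2+M_1\equiv 0\pmod p.$$
Setting $\Delta_3:=M_3$, the natural candidates are $\Delta_4''\equiv aM_3-M_2\pmod p$ and $\Delta_4'\equiv aM_3\pmod p$; they automatically satisfy $a\equiv\Delta_4'/\Delta_3\pmod p$, $\lambda^{-1}\equiv\Delta_4''/\Delta_3\pmod p$, and $\Delta_4'-\Delta_4''\equiv M_2\pmod p$, and the bound $|\Delta_3|<10^5N^3/J$ is immediate.

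The hard part will be establishing the bounds $|\Delta_4'|,|\Delta_4''|<10^5N^4/J$. The quadratic identity yields $(\Delta_4'')^2\equiv M_2^2-M_1M_3\pmod p$ with right-hand side an integer of absolute value $\lesssim N^8/J^2$, which is much less than $p$ by the hypothesis on $N$. The difficulty is that a square root of a small integer modulo $p$ need not itself have a small representative. To overcome this I expect one must either refine the choice of $(\mathbf t,\mathbf t')$ to a reduced basis of the $2$-dimensional sublattice of $\Gamma$ singled out by $\lambda_1,\lambda_2$, or invoke a second geometry-of-numbers step on a $3$-dimensional auxiliary lattice that encodes the coupled constraints $a\Delta_3\equiv\Delta_4'$ and $\lambda^{-1}\Delta_3\equiv\Delta_4''\pmod p$; the residual case $\lambda_2>1$, in which the solutions essentially lie along a single arithmetic progression in $\Gamma$, should be handled by a separate, easier argument using the rigidity imposed by $N<p^{1/18}$. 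Once $|\Delta_4''|$ is bounded, the bound on $|\Delta_4'|$ follows from $\Delta_4'=\Delta_4''+M_2$ and $|M_2|\lesssim N^4/J$.
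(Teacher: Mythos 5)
Your setup is exactly the paper's: the congruence is linearized in the elementary symmetric functions $(s_1,s_2,s_3)$, you build the same rank-$3$ lattice $\Gamma$ (yours is the paper's scaled by $\lambda$, which is the same lattice), take differences against a reference solution, apply Corollary~\ref{cor:latpoints} to get $\prod_i\min\{1,\lambda_i\}\lesssim 1/J$, rule out $\lambda_3\le 1$, and in the main case $\lambda_1,\lambda_2\le 1<\lambda_3$ extract two short lattice vectors whose $2\times2$ minors $(M_1,M_2,M_3)=(\Delta_5,\Delta_4,\Delta_3)$ satisfy the same size bounds. You also correctly derive $a-\lambda^{-1}\equiv \Delta_4/\Delta_3$ and $a^2-2a\lambda^{-1}\equiv\Delta_5/\Delta_3$, and you correctly pinpoint the obstruction: the congruence $(\lambda^{-1}\Delta_3)^2\equiv\Delta_4^2-\Delta_5\Delta_3\pmod p$ has a small right-hand side, but a priori this only controls $\lambda^{-2}$ mod $p$, not a small representative of $\lambda^{-1}\Delta_3$.

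However, at precisely this point your proposal stops: you announce the bound on $|\Delta_4''|$ as "the hard part" and speculate about two possible fixes (a reduced basis for the rank-$2$ sublattice, or a second geometry-of-numbers step on an auxiliary rank-$3$ lattice), without carrying either out. Neither is what the paper actually does, and it is not clear either would work: both deal with the two linear relations already in hand, whereas the missing information comes from the constant term of the cubic, which you never use. The paper's resolution is to observe that the constant term supplies a \emph{third} congruence $a^3-3a^2\lambda^{-1}\equiv\Delta_6/\Delta_3\pmod p$ with $|\Delta_6|\lesssim N^6/J$; eliminating $\lambda^{-1}$ gives two integer quadratics $Q(Z)=\Delta_3Z^2-2\Delta_4Z+\Delta_5$ and $P(Z)=\Delta_4Z^2-2\Delta_5Z+\Delta_6$ both vanishing at $Z=a$ mod $p$. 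Their resultant is then $\equiv 0\pmod p$ and of size $\lesssim N^{18}/J^4<p$ (this is exactly where the hypothesis $N<0.1\,p^{1/18}J^{2/9}$ enters), hence $\Res(P,Q)=0$. Since $P$ proportional to $Q$ would force $\Delta_4^2=\Delta_5\Delta_3$ and hence $\lambda^{-2}\equiv 0$, the polynomials must share a rational root, so $Q$ splits over $\Q$ and its discriminant is a perfect square: $\Delta_4^2-\Delta_5\Delta_3=m^2$ with $m$ an integer of size $\lesssim N^4/J$. Then $\lambda^{-1}\equiv\pm m/\Delta_3$ and $a\equiv(\Delta_4\pm m)/\Delta_3$ give the two small numerators. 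Without this resultant step your argument does not reach the conclusion.

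A second, smaller gap: you wave at the residual case $\lambda_1\le 1<\lambda_2$ as "easier" and to be handled by "rigidity." In the paper this case is shown to be impossible by a different mechanism: linear dependence on a single short vector reduces to either $x+y+z=$ const (handled by Lemma~\ref{lem:CillGarBGKS} applied to $(x+a-\lambda^{-1})(y+a-\lambda^{-1})(z+a-\lambda^{-1})\equiv$ const, where the hypothesis $\lambda\notin I^{-1}$ guarantees the right-hand side is nonzero), or to a rigid system giving Lemma~\ref{lem:Identity} and a divisor-count; either way $J<N^{o(1)}$, contradicting $J>N^{\varepsilon}$. You would need to supply such an argument; it is not a formality.
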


\begin{proof}
 It follows that  $J$ is the number of solutions of the congruence
\begin{equation*}
\begin{split}
\lambda(a+x)&(a+y)(a+z)\\
&\equiv (a+x)(a+y)+(a+y)(a+z)+(a+z)(a+x)\pmod p
\end{split}
\end{equation*}
in positive integers $x,y,z\le N$ with $(a+x)(a+y)(a+z)\not\equiv 0\pmod p$.

Note that by Corollary~\ref{cor:CillGar} we have $J< N^{1+o(1)}$ so that  $N< p^{1/13}$. We rewrite the congruence in the form
\begin{equation}
\label{eqn:lambdaxyz}
\begin{split}
xyz & +(a-\lambda^{-1})(xy+yz+zx)+\\ &(a^2-2a\lambda^{-1})(x+y+z)+(a^3-3a^2\lambda^{-1})
\equiv 0\pmod p.
\end{split}
\end{equation}
We fix one solution $(x_0,y_0,z_0)$ and get
\begin{equation}
\label{eqn:xyz}
\begin{split}
(a^2-&2a\lambda^{-1})(x+y+z-A_0)\\+&(a-\lambda^{-1})(xy+yz+zx-B_0) +(xyz-C_0)\equiv 0\pmod p.
\end{split}
\end{equation}
where
$$
A_0=x_0+y_0+z_0,\quad B_0=x_0y_0+y_0z_0+z_0x_0, \quad C_0= x_0y_0z_0
$$
We use some ideas from~\cite{BGKS2}. Define the lattice
$$
\Gamma = \{(u,v,w)\in\Z^3~:~(a^2-2a\lambda^{-1})u+(a-\lambda^{-1})v+w\equiv 0 \pmod p\}
$$
and the body
$$D = \{(u,v,w)\in\R^3~:~|u|\le 3 N,\,|v|\le 3N^2,\,|w|\le N^3\}.$$
Since any given vector
$$
(x+y+z-A_0, xy+yz+zx-B_0, xyz-C_0)
$$
defines the values of $x,y,z$ with at most $6$ possibilities, we have
$$|D\cap\Gamma|\ge J/6.$$
Therefore, by Corollary~\ref{cor:latpoints}, the successive minimas
$\lambda_i=\lambda_i(D,\Gamma)$, $i=1,2,3$, satisfy the inequality
$$
\prod_{i=1}^3\min\{1,\lambda_i\}<1000 J^{-1}.
$$
In particular, we have $\lambda_1\le 1$. By the definition of $\lambda_i$, there are linearly
independent vectors
$$
(u_i,v_i,w_i)\in\lambda_iD\cap\Gamma,\qquad i=1,2,3.
$$
We consider separately the following three cases.

\bigskip

{\it Case~1\/}: $\lambda_3\le 1$.
Thus,  we have $\lambda_1\lambda_2\lambda_3<1000J^{-1}$.
We consider the determinant
\begin{equation*}
\Delta = \det  \(
  \begin{array}{cccccccc}
    u_1 & v_1 & w_1\\
    u_2 & v_2 & w_2\\
    u_3 & v_3 & w_3\\
  \end{array}
\).
\end{equation*}
Clearly,
$$
|\Delta|< 6N^6\lambda_1\lambda_2\lambda_3 <6000 N^6/J<p.
$$
Thus, $|\Delta |<p.$
On the other hand,
from
$$
(a^2-2a\lambda^{-1})u_i+(a-\lambda^{-1})v_i+w_i\equiv 0\pmod p,\qquad i=1,2,3,
$$
we conclude that $\Delta$ is divisible by $p$. Therefore,
$\Delta=0$,
which contradicts the linear independence of the vectors
$(u_i,v_i,w_i)$, $i=1,2,3$. Thus, this case is impossible.

\bigskip

{\it Case~2\/}: $\lambda_1\le 1,\lambda_2>1$.
Since $\lambda_2>1$ we see that
$$
(x+y+z-A_0, xy+yz+zx-B_0, xyz-C_0)
$$
and $(u_1,v_1,w_1)$ are
linearly dependent. Therefore, one of the two conditions hold:
\begin{itemize}
\item[(i)] $x+y+z-A_0=0$;
\item[(ii)]
$
\left\{\begin{array}{llll}
xyz=\frac{w_1}{u_1}(x+y+z)+C_0-\frac{w_1}{u_1}A_0,\\
xy+yz+zx=\frac{v_1}{u_1}(x+y+z)+B_0-\frac{v_1}{u_1}A_0,
\end{array}
\right.
$
\end{itemize}
If a solution $(x,y,z)$ satisfy (i), then our congruence~\eqref{eqn:lambdaxyz} can be written in the form
$$
\Bigl(x+a-\frac{1}{\lambda}\Bigr)\Bigl(y+a-\frac{1}{\lambda}\Bigr)\Bigl(z+a-\frac{1}{\lambda}\Bigr)\equiv \lambda'\pmod p.
$$
Since $\lambda\not \in I^{-1}\pmod p$, we have $\lambda'\not\equiv 0\pmod p.$ Hence, by Lemma~\ref{lem:CillGarBGKS} the solutions counted in (i) contributes to our $J$ at most the quantity $N^{o(1)}$.

If a solution $(x,y,z)$ satisfy (ii), then Lemma~\ref{lem:Identity} and the bound for the divisor function implies that one of the variables $x,y,z$ is determined with at most $N^{o(1)}$ possibilities. Therefore, by Corollary~\ref{cor:CillGar} we get that the solutions counted on (ii)
contributes to our $J$ also at most the quantity $N^{o(1)}$.

Thus, we get $J<N^{o(1)}$ contradicting our assumption that $J>N^{\varepsilon}$. Therefore, Case 2 is impossible.

\bigskip

{\it Case~3\/}: $\lambda_1\le 1,\lambda_2\le 1,\lambda_3>1$.

Thus,  $\lambda_1\lambda_2<1000 J^{-1}$. Next,
\begin{equation}
\label{eq:case2nu=3}
\(\begin{array}{cc}
    u_1&v_1\\
    u_2&v_2\\
  \end{array}\)\(
  \begin{array}{c}
   a^2-2a\lambda^{-1}\\
  a-\lambda^{-1}\\
  \end{array}
\)\equiv \(
  \begin{array}{c}
   -w_1\\
  -w_2\\
  \end{array}
\)\pmod p.
\end{equation}
Let
$$
\Delta_3 =\det \(\begin{array}{cc}
    u_1&v_1\\
    u_2&v_2\\
  \end{array}\), \, \Delta_5=\det \(\begin{array}{cc}
    -w_1&v_1\\
    -w_2&v_2\\
  \end{array}\),\, \Delta_4=\det \(\begin{array}{cc}
    u_1&-w_1\\
    u_2&-w_2\\
  \end{array}\).
$$

We have
\begin{equation}
\label{eq:case2nu=3determinants}
|\Delta_3|<2000 N^3/J,\qquad |\Delta_5|<2000 N^5/J,\qquad |\Delta_4|<2000 N^4/J.
\end{equation}

We observe that
\begin{equation}
\label{eq:Det nozero}
\Delta_3\not\equiv 0\pmod p.
\end{equation}
Indeed, assuming the contrary, from the congruence~\eqref{eq:case2nu=3} we get
$$
\Delta_3\equiv \Delta_5\equiv \Delta_4\equiv 0\pmod p.
$$
Taking into account~\eqref{eq:case2nu=3determinants}, this implies that
$$
\Delta_3= \Delta_5= \Delta_4= 0.
$$
It then follows that the rank of the matrix
$$
\(\begin{array}{ccc}
    u_1&v_1&w_1\\
    u_2&v_2&w_2\\
  \end{array}\)
$$
is strictly less than $2$, which contradicts the linear independence of the vectors $(u_i,v_i,w_i)$, $i=1,2$.
Thus, we have~\eqref{eq:Det nozero}. Hence,
$$
a^2-2a\lambda^{-1}\equiv \frac{\Delta_5}{\Delta_3}\pmod p, \quad a-\lambda^{-1} \equiv \frac{\Delta_4}{\Delta_3}\pmod p.
$$
Using~\eqref{eqn:lambdaxyz}, we also have
$$
a^3-3a^2\lambda^{-1}\equiv \frac{\Delta_6}{\Delta_3}\pmod p,
$$
for some integer $\Delta_6$ with
$$
|\Delta_6|<6000 N^6/J.
$$
We have
\begin{equation}
\label{eqn:lambda -2}
\lambda^{-2}\equiv \frac{\Delta_4^2}{\Delta_3^2}-\frac{\Delta_5}{\Delta_3}\equiv \frac{\Delta_4^2-\Delta_5 \Delta_3}{\Delta_3^2}\pmod p.
\end{equation}
Furthermore, substituting  $\lambda^{-1}\equiv a-(\Delta_4/\Delta_3)$ in the other two equations we get
$$
\Delta_3a^2-2\Delta_4 a+\Delta_5\equiv 0\pmod p,
$$
$$
2\Delta_3 a^3-3\Delta_4a^2+\Delta_6\equiv 0\pmod p.
$$
It follows that
$$
\Delta_4 a^2-2\Delta_5 a+\Delta_6\equiv 0\pmod p.
$$
Consider the polynomials
$$
P(Z)=
\Delta_4 Z^2-2\Delta_5Z+\Delta_6, \quad Q(Z)=
\Delta_3Z^2-2\Delta_4 Z+\Delta_5.
$$
Since
$$
P(a)\equiv Q(a)\equiv 0\pmod p,
$$
we have
$$
\Res(P,Q)\equiv 0\pmod p.
$$
On the other hand
$$
|\Res(P,Q)|<10^{18} N^{18}/J^{4}<p.
$$
Thus,
$$
\Res(P,Q)=0.
$$
It then follows that
 the polynomials $P(Z)$ and $Q(Z)$ have a common root. If $Q(Z)$ is irreducible in $\Q[\Z]$, then $Q(Z)$ and $P(Z)$ are linearly dependent, implying that
$$
\Delta_4^2=\Delta_5 \Delta_3.
$$
In view of~\eqref{eqn:lambda -2} this is impossible.
Hence, $Q(Z)$ is irreducible in $\Q[\Z]$, and therefore its discriminant is a square of an integer. Thus,
$$
\Delta_4^2-\Delta_5 \Delta_3=m^2,\quad m\in \Z_{+}.
$$
It then follows that $m<10^4 N^4/J$. Furthermore,
$$
\lambda^{-2}\equiv \frac{m^2}{\Delta_3^2}\pmod p.
$$
Hence,
$$
\lambda^{-1}\equiv\frac{\Delta_4'}{\Delta_3}\pmod p,\quad |\Delta_4'|=m<10^4 N^4/J.
$$
Consequently
$$
a\equiv \frac{\Delta_4}{\Delta_3}+\lambda^{-1}\equiv \frac{\Delta_4{''}}{\Delta_3}\pmod p,\quad |\Delta_4{''}|<10^5 N^4/J.
$$

\end{proof}

\section{Proof of Theorems~\ref{thm:kI*}--\ref{thm:kI*=kI* I=[1,N]}}

\subsection{Proof of Theorem~\ref{thm:kI*}}
Let $I=[a+1,a+N]$.  We first consider the case $N<p^{\frac{k+1}{2k}}$. Thus, we are aiming to prove that in this case one has the bound
$$
J_{2k}<N^{2k^2/(k+1)+o(1)}.
$$
Put
$$
V=[N^{(k-1)/(k+1)}];\quad Y=[N^{2/(k+1)}];\quad I_1=[a+1,a+2N].
$$
First we define an appropriate subset $\cV\in [0.5V,\, V]$. For a given $v\in [0.5V, \, V]$ define the function
$\eta_v:I_1\to\Z_{+}$ by
$$
\eta_v(u')=\Bigl|\Bigl\{(u_1',v_1)\in I_1\times [0.5V,\, V];\quad u'v_1\equiv u_1'v\pmod p\Bigr\}\Bigr|.
$$
By Lemma~\ref{lem:multEnergyTwoInt},
$$
\sum_{u'\in I_1}\,\,\sum_{v\in [0.5V,\, V]}\eta_v(u')<N^{1+o(1)}V.
$$
Therefore, there is a subset $\cV\in [0.5V,\, V]$ with $|\cV|\sim V$ such that
\begin{equation}
\label{eqn:Th1 sum eta}
\sum_{u'\in I_1}\eta_v(u')<N^{1+o(1)}\quad {\rm for \,\,\,any}\quad v\in\cV.
\end{equation}

For any fixed integers $y_i\in\cY=[0.5Y,\, Y]$ and $v\in \cV$ the quantity $J_{2k}$ does not exceed the number of solutions of the congruence
$$
\frac{1}{u_1'-vy_1}+\ldots+\frac{1}{u_k'-vy_k}\equiv \frac{1}{u_{k+1}'-vy_{k+1}}+\ldots+\frac{1}{u_{2k}'-vy_{2k}}\pmod p
$$
in integers $u_i'\in I_1$. Thus, summing up over $y_i$ and $v\in \cV$ we get
\begin{equation}
\label{eqn:YZR<trig sum}
Y^{2k}V J_{2k}\ll \frac{1}{p}\sum_{n=0}^{p-1}\sum_{v\in\cV}\Bigl|\sum_{u'\in I_1}\sum_{y\in\cY}e_p(n(u'-vy)^{-1})\Bigr|^{2k}.
\end{equation}

For $v\in \cV$ and $B$ of the form $B=2^s<N$, denote
$$
S_{v,B}=\Bigl\{u'\in I_1;\quad 0.5B\le\eta_v(u')<B\Bigr\}.
$$
Hence
$$
I_1=\bigcup_BS_{v,B}
$$
and since $v\in\cV$, by~\eqref{eqn:Th1 sum eta}
\begin{equation}
\label{eqn:Th1SvB<}
|S_{v,B}|<\frac{N^{1+o(1)}}{B}.
\end{equation}
From~\eqref{eqn:YZR<trig sum} we clearly have
$$
Y^{2k}V J_{2k}\ll \frac{N^{o(1)}}{p}\sum_B\sum_{n=0}^{p-1}\sum_{v\in\cV}\Bigl(\sum_{u'\in S_{v,B}}\Bigl|\sum_{y\in\cY}e_p(n(u'-vy)^{-1})\Bigl|\Bigr)^{2k}.
$$
Hence, by the H\"older inequality and~\eqref{eqn:Th1SvB<}, we get, for some fixed $B$,
\begin{equation}
\label{eqn:Th1 passing to SvB}
Y^{2k}V J_{2k}\ll \frac{N^{o(1)}}{p}\Bigl(\frac{N}{B}\Bigr)^{2k-1}\,\sum_{n=0}^{p-1}\sum_{v\in\cV}\sum_{u'\in S_{v,B}}\Bigl|\sum_{y\in\cY}e_p(n(u'-vy)^{-1})\Bigl|^{2k}.
\end{equation}
The quantity
$$
\frac{1}{p}\sum_{n=0}^{p-1}\sum_{v\in\cV}\sum_{u'\in S_{v,B}}\Bigl|\sum_{y\in\cY}e_p(n(u'-vy)^{-1})\Bigl|^{2k}
$$
is bounded by the number of solutions of the congruence
$$
\frac{1}{u'-vy_1}+\ldots+\frac{1}{u'-vy_k}\equiv \frac{1}{u'-vy_{k+1}}+\ldots+\frac{1}{u'-vy_{2k}}\pmod p
$$
in variables $v\in \cV, u'\in S_{v,B}, y\in \cY$. Thus, we obtain the bound
$$
\frac{1}{p}\sum_{n=0}^{p-1}\sum_{v\in\cV}\sum_{u'\in S_{v,B}}\Bigl|\sum_{y\in\cY}e_p(n(u'-vy)^{-1})\Bigl|^{2k}\ll Y^{k}V\frac{N^{1+o(1)}}{B}+Y^{2k}B.
$$
Hence, from~\eqref{eqn:Th1 passing to SvB} we get
$$
Y^{2k}V J_{2k}\ll N^{2k-1+o(1)}B^{-2k+1}\Bigl(Y^kV\frac{N^{1+o(1)}}{B}+Y^{2k}B\Bigr)
$$
Thus,
$$
J_{2k}<N^{2k+o(1)}Y^{-k}+\frac{N^{2k-1+o(1)}}{V}<N^{\frac{2k^2}{k+1}+o(1)},
$$
which proves the result in the case $N<p^{\frac{k+1}{2k}}$.

Let now $N>p^{\frac{k+1}{2k}}$. We split the interval $I=[a+1,a+N]$ into $K\sim Np^{-\frac{k+1}{2k}}$ subintervals of length at most $N_1=p^{\frac{k+1}{2k}}$. Thus, for some intervals $I^{(1)},\ldots, I^{(2k)}$
of length $N_1$ we have the bound
$$
J_{2k}<K^{2k}R_{2k}\ll \Bigl(\frac{N}{p^{\frac{k+1}{2k}}}\Bigr)^{2k}R_{2k},
$$
where $R_{2k}$ is the number of solutions of the congruence
$$
\frac{1}{x_1}+\ldots+\frac{1}{x_k}\equiv \frac{1}{x_{k+1}}+\ldots+\frac{1}{x_{2k}}\pmod p,\qquad x_i\in I^{(i)},\, i=1,\ldots,2k.
$$
Expressing the number of solutions of this congruence in terms of exponential sums, applying the H\"{o}lder inequality we get that
$$
R_{2k}\le\prod_{i=1}^{2k}(R_{2k}(i))^{1/2k},
$$
where $R_{2k}(i)$ is the number of solutions of the congruence
$$
\frac{1}{x_1}+\ldots+\frac{1}{x_k}\equiv \frac{1}{x_{k+1}}+\ldots+\frac{1}{x_{2k}}\pmod p,\qquad x_1,\ldots,x_{2k}\in I^{(i)}.
$$
Thus, for some fixed $i=i_0$ one has
$$
R_{2k}<R_{2k}(i_0).
$$
Since $|I^{(i_0)}|<N_1=p^{\frac{k+1}{2k}}$, we already know that
$$
R_{2k}(i_0)< N_1^{2k^2/(k+1)+o(1)}=p^{k}N^{o(1)}.
$$
Thus,
$$
J_{2k}<\Bigl(\frac{N}{p^{\frac{k+1}{2k}}}\Bigr)^{2k}p^{k}N^{o(1)}=\frac{N^{2k+o(1)}}{p},
$$
which concludes the proof of Theorem~\ref{thm:kI*}.

\subsection{Proof of Theorem~\ref{thm:3I* small |I|}}

Let $I=\{a+1,\ldots,a+N\}$. We can assume that $N$ is large and $J>N^{2/3}\log N$, as otherwise there is nothing to prove. The conditions of Lemma~\ref{lem:3I* small |I|} are satisfied,
so that there exist integers $\Delta_4',\Delta_4{''}$ and $\Delta_3$ with
$$
|\Delta_4'|< N^{10/3}, \quad |\Delta_4{''}|< N^{10/3},\quad |\Delta_3|< N^{7/3}
$$
such that
$$
a\equiv \frac{\Delta_4'}{\Delta_3}\pmod p;\quad  \lambda^{-1}\equiv \frac{\Delta_4^{''}}{\Delta_3}\pmod p.
$$
Substituting this in
\begin{equation*}
\begin{split}
\lambda&(a+x)(a+y)(a+z)\\
&\equiv (a+x)(a+y)+(a+y)(a+z)+(a+z)(a+x)\pmod p
\end{split}
\end{equation*}
we obtain
\begin{equation*}
\begin{split}
(\Delta_4'+&\Delta_3x)(\Delta_4'+\Delta_3y)(\Delta_4'+\Delta_3z)\equiv \Delta_4''\Bigl\{(\Delta_4'+\Delta_3x)(\Delta_4'+\Delta_3y)\\
&+(\Delta_4'+\Delta_3y)(\Delta_4'+\Delta_3z)+(\Delta_4'+\Delta_3z)(\Delta_4'+\Delta_3x)\Bigr\}\pmod p.
\end{split}
\end{equation*}
The left and the right hand sides are of the order of magnitude $O(N^{10})=o(p)$. Thus, the congruence is converted to the equality
\begin{equation*}
\begin{split}
(\Delta_4'+&\Delta_3x)(\Delta_4'+\Delta_3y)(\Delta_4'+\Delta_3z)=\Delta_4''\Bigl\{(\Delta_4'+\Delta_3x)(\Delta_4'+\Delta_3y)\\
&+(\Delta_4'+\Delta_3y)(\Delta_4'+\Delta_3z)+(\Delta_4'+\Delta_3z)(\Delta_4'+\Delta_3x)\Bigr\}
\end{split}
\end{equation*}
and the claim follows from Lemma~\ref{lem:dioph 3I*}.

\subsection{Proof of Theorem~\ref{thm:3I*=3I* small |I|}}

Let $I=\{a+1,\ldots,a+N\}$. The statement is equivalent to the claim that for any $\varepsilon>0$ one has the bound
$$
J_6\ll N^{3+\varepsilon},
$$
where the implied constant may depend only $\varepsilon$.

Observe that for any $j\in\Z$ there are $u_{j}, v_{j}\in \Z$
such that
\begin{equation}
\label{eqn:j=uj/vj}
\frac{u_{j}}{v_{j}}\equiv j\pmod p;\quad |u_{j}|\le p^{1/2},\quad 0<|v_{j}|\le p^{1/2}.
\end{equation}
This follows from the fact that among  more than $p$ numbers
$$
u+j v,\quad 0\le u,v\le [p^{1/2}]
$$
there are at least two numbers congruent modulo $p$. We also represent $a$ in this form, that is
\begin{equation}
\label{eqn:a=a0/b0}
a\equiv\frac{a_0}{b_0}\pmod p;\quad |a_0|\le p^{1/2},\quad 0<|b_0|\le p^{1/2}.
\end{equation}
Let $T_{j}$ be the number of solutions of the congruence
$$
\frac{1}{a+x_1}+\frac{1}{a+x_2}+\frac{1}{a+x_3}\equiv j\pmod p;\quad 1\le x_1,x_2,x_3\le N.
$$
We have
$$
J_6=\sum_{j=0}^{p-1}T_j^{2}
$$
and
$$
\sum_{j=0}^{p-1}T_j\le N^3
$$
From Corollary~\ref{cor:CillGar} it clearly follows that $T_j< N^{1+o(1)}$.
Therefore, it follows that the contribution to $J_6$ from
$
j\in \{I^{-1}\cup {0}\} \pmod p
$
is
$$
\sum_{\substack{0\le j\le p-1\\ j\in I^{-1}\cup 0 \pmod p}}T_j^2\le (N+1)\max_{j}T_J^2<N^{3+o(1)}.
$$
Furthermore, the
contribution to $J_6$ from those $j$ for which
$T_j<N^{0.1\varepsilon}$
is less than
$$
N^{0.1\varepsilon}\sum_{j=0}^{p-1}T_j\le N^{3+0.1\varepsilon}.
$$
Thus, if we denote by $\Omega$ the set of integers $j$ with
$$
1\le j\le p-1,\quad j\not\in I^{-1}\pmod p,\quad |T_j|>N^{0.1\varepsilon},
$$
then
\begin{equation}
\label{eqn:J6<sum j Omega}
J_6<N^{3+0.2\varepsilon}+\sum_{j\in \Omega}T_j^2.
\end{equation}
Now for each $j$ we apply Lemma~\ref{lem:3I* small |I|} (where $J$ is substituted by $T_j$ and $\lambda$ by $j$.
Then there exist numbers $\Delta_{4j}',\Delta_{4j}'',\Delta_{3j}$ with
$$
\Delta_{4j}'\ll N^4, \quad \Delta_{4j}{''}\ll N^4,\quad \Delta_{3j}\ll N^3
$$
such that
$$
a\equiv \frac{\Delta_{4j}'}{\Delta_{3j}}\pmod p;\qquad  j^{-1}\equiv \frac{\Delta_{4j}''}{\Delta_{3j}}\pmod p.
$$
Comparing this with~\eqref{eqn:j=uj/vj} and~\eqref{eqn:a=a0/b0}, we see that
\begin{equation}
\label{eqn:a=a0/b0=Delta4jDelta3j mod p}
a\equiv \frac{a_0}{b_0}\equiv \frac{\Delta_{4j}'}{\Delta_{3j}}\pmod p;\qquad j^{-1}\equiv \frac{v_j}{u_j}\equiv \frac{\Delta_{4j}''}{\Delta_{3j}}\pmod p.
\end{equation}
Taking into account the inequality conditions on $N,a_0,b_0,u_j,v_j$, we see that we have equality
\begin{equation}
\label{eqn:a0/b0=Delta4jDelta3j}
\frac{a_0}{b_0}= \frac{\Delta_{4j}'}{\Delta_{3j}};\qquad \frac{v_j}{u_j}= \frac{\Delta_{4j}''}{\Delta_{3j}}.
\end{equation}
Now we represent the congruence corresponding to $T_j$ in the form
\begin{equation*}
\begin{split}
x_1x_2x_3 & +(a-j^{-1})(x_1x_2+x_2x_3+x_3x_1)\\ & +(a^2-2aj^{-1})(x_1+x_2+x_3)+(a^3-3a^2j^{-1})
\equiv 0\pmod p.
\end{split}
\end{equation*}
Using~\eqref{eqn:a=a0/b0=Delta4jDelta3j mod p}, we substitute $a$ and $j^{-1}$, implying
\begin{equation*}
\begin{split}
x_1x_2x_3 & +\Bigl(\frac{\Delta_{4j}'}{\Delta_{3j}}-\frac{\Delta_{4j}''}{\Delta_{3j}}\Bigr)(x_1x_2+x_2x_3+x_3x_1)\\ & +\Bigl(\Bigl(\frac{\Delta_{4j}'}{\Delta_{3j}}\Bigr)^2-2\frac{\Delta_{4j}'}{\Delta_{3j}}\cdot\frac{\Delta_{4j}''}{\Delta_{3j}}\Bigr)(x_1+x_2+x_3)\\
&+ \Bigl(\Bigl(\frac{\Delta_{4j}'}{\Delta_{3j}}\Bigr)^3-3\Bigl(\frac{\Delta_{4j}'}{\Delta_{3j}}\Bigr)^2\cdot \frac{\Delta_{4j}''}{\Delta_{3j}}\Bigr)
\equiv 0\pmod p.
\end{split}
\end{equation*}
After multiplying by $\Delta_{3j}^3$ the left hand side becomes an integer of the size $O(N^{12})=o(p).$
Thus, the resulting congruence is converted to the equality, and dividing by $\Delta_{3j}^3$ we get
\begin{equation*}
\begin{split}
x_1x_2x_3 & +\Bigl(\frac{\Delta_{4j}'}{\Delta_{3j}}-\frac{\Delta_{4j}''}{\Delta_{3j}}\Bigr)(x_1x_2+x_2x_3+x_3x_1)\\ &+\Bigl(\Bigl(\frac{\Delta_{4j}'}{\Delta_{3j}}\Bigr)^2-2\frac{\Delta_{4j}'}{\Delta_{3j}}\cdot\frac{\Delta_{4j}''}{\Delta_{3j}}\Bigr)(x_1+x_2+x_3)\\
&+\Bigl(\Bigl(\frac{\Delta_{4j}'}{\Delta_{3j}}\Bigr)^3-3\Bigl(\frac{\Delta_{4j}'}{\Delta_{3j}}\Bigr)^2\cdot \frac{\Delta_{4j}''}{\Delta_{3j}}\Bigr)
= 0.
\end{split}
\end{equation*}
We use~\eqref{eqn:a0/b0=Delta4jDelta3j} and write this equality in the form
\begin{equation*}
\begin{split}
x_1x_2x_3 & +\Bigl(\frac{a_0}{b_0}-\frac{v_j}{u_j}\Bigr)(x_1x_2+x_2x_3+x_3x_1)\\ &+\Bigl(\frac{a_0^2}{b_0^2}-2\frac{a_0}{b_0}\cdot\frac{v_j}{u_j}\Bigr)(x_1+x_2+x_3)\\
&+ \Bigl(\frac{a_0^3}{b_0^3}-3\frac{a_0^2}{b_0^2}\cdot \frac{v_j}{u_j}\Bigr)
= 0.
\end{split}
\end{equation*}
Consequently we get
\begin{equation}
\label{eqn:k=3 equality}
\frac{1}{(a_0/b_0)+x_1}+\frac{1}{(a_0/b_0)+x_2}+\frac{1}{(a_0/b_0)+x_3}=\frac{u_j}{v_j}.
\end{equation}
Thus, for $j\in \Omega$ the quantity $T_j$ is just the number of solutions of the diophantine equation~\eqref{eqn:k=3 equality} in positive integers $x_1,x_2,x_3\le N$. Since $u_j/v_j$ are pairwise distinct, we get that the quantity $\sum\limits_{j\in \Omega}T_j^2$
is not greater than the number of solutions of the equation
\begin{equation*}
\begin{split}
\frac{1}{(a_0/b_0)+x_1}+&\frac{1}{(a_0/b_0)+x_2}+\frac{1}{(a_0/b_0)+x_3}\\
&=\frac{1}{(a_0/b_0)+x_4}+\frac{1}{(a_0/b_0)+x_5}+\frac{1}{(a_0/b_0)+x_6}
\end{split}
\end{equation*}
in positive integers $x_1,\ldots,x_6\le N.$ Therefore, by Lemma~\ref{lem:symmetricComplexSigma} we get that
$$
\sum_{j\in \Omega}T_j^2<N^{3+o(1)}.
$$
In view of~\eqref{eqn:J6<sum j Omega}, this completes the proof of our theorem.

\subsection{Proof of Theorem~\ref{thm:kI*=kI* small |I|}}

Let $I=\{a+1,\ldots,a+N\}$. Using standard arguments involving H\"older's inequality (combined with inductive process), it suffices to show that
the contribution from the set of solutions with pairwise $x_1,\ldots,x_{2k}$ is $N^{k+o(1)}$. Thus, in what follows, we consider $x_1,\ldots,x_{2k}$
pairwise distinct.

For each solution $\vec{x} = (x_1,\ldots,x_{2k})$,
we  consider the polynomial $P_{\vec{x}}(Z)$ defined as
$$
\prod_{i\not=1}(Z+x_i)+\ldots+\prod_{i\not=k}(Z+x_{i})-\prod_{i\not=k+1}(Z+x_{i})-\ldots-\prod_{i\not=2k}(Z+x_{i}).
$$
Clearly, $\deg P_{\vec{x}}(Z)\le 2k-2$. Note also that $P_{\vec{x}}(-x_1)\not= 0$. In particular, $P_{\vec{x}}(Z)$ is not a zero polynomial, and since $P_{\vec{x}}(a)\equiv 0\pmod p$, it is not a constant polynomial neither. Clearly, $P_{\vec{x}}(Z)$ has the form
$$
P_{\vec{x}}(Z)=\sum_{i=0}^{2k-2}a_iZ^{2k-2-i}
$$
where $|a_i|\ll N^{i+1}$.
We fix one solution
$$
(x_1,\ldots,x_{2k})=(c_1,\ldots,c_{2k})
$$
and consider the polynomial $P_{\vec{c}}(Z)$ that corresponds to $(c_1,\ldots,c_{2k})$. Since
$$
P_{\vec{x}}(a)\equiv P_{\vec{c}}(a)\equiv 0\pmod p,
$$
we get that
$$
\Res(P_{\vec{x}}(Z),P_{\vec{c}}(Z))\equiv 0\pmod p.
$$
On the other hand, $P_{\vec{x}}(Z)$ and $P_{\vec{c}}(Z)$ satisfy the condition of Lemma~\ref{lem:DeterMagic} with
$$
\sigma=\theta=1,\qquad m=n=2k-1.
$$
Hence,
$$
\Res(P_{\vec{x}}(Z),P_{\vec{c}}(Z))\ll N^{4k^2-4k}.
$$
Therefore, assuming $N<p^{1/(4k^2)}$, we get
$$
|\Res(P_{\vec{x}}(Z),P_{\vec{c}}(Z))|<p,
$$
whence
$$
\Res(P_{\vec{x}}(Z),P_{\vec{c}}(Z))=0.
$$
It follows that for every solution $\vec{x}=(x_1,\ldots,x_{2k})$ the polynomial  $P_{\vec{x}}(Z)$ has a common root with $P_{\vec{c}}(Z)$. Since $x_i$ are pairwise distinct, the condition $P_{\vec{x}}(\sigma)=0$ implies that $x_i+\sigma\not=0$. Thus, Lemma~\ref{lem:symmetricComplexSigma} implies that for every root $\sigma$ of $P_{\vec{c}}(Z)$ the equation
$P_{\vec{x}}(\sigma)=0$
has at most $N^{k+o(1)}$ solutions in positive integers $x_i\le N$.  The claim now follows.

\subsection{Proof of Theorems~\ref{thm:kI*=kI* I=[1,N]} and~\ref{thm:kI*=kI* I=[1,N] prime}}

First we prove Theorem~\ref{thm:kI*=kI* I=[1,N]}. It suffices to consider the case $kN^{k-1}<p$ as otherwise the statement is trivial. For $\lambda=0,1,\ldots,p-1$  denote
$$
J(\lambda)=\Bigl\{(x_1,\ldots,x_k)\in I^k:\quad x_1^*+\ldots+x_k^*\equiv \lambda\pmod p\Bigr\}.
$$
Let
$$
\Omega=\{\lambda\in [1, p-1]:\quad |J(\lambda)|\ge 1\}.
$$
Since $J(0)=0$, we have
$$
J_{2k}=\sum_{\lambda\in \Omega}|J(\lambda)|^2.
$$
Consider the lattice
$$
\Gamma_{\lambda}=\{(u,v)\in \Z^2:\quad \lambda u\equiv v\pmod p\}
$$
and the body
$$
D=\{(u,v)\in \R^2:\quad |u|\le N^k,\,\, |v|\le kN^{k-1}\}.
$$
Denoting by $\mu_1,\mu_2$ the consecutive minimas of the body $D$ with respect to the lattice
$\Gamma_{\lambda}$, by Corollary~\ref{cor:latpoints} it follows
$$
\prod_{i=1}^2\min\{\mu_i, 1\}\le \frac{15}{|\Gamma_{\lambda}\cap D|}.
$$
Observe that for $(x_1,\ldots,x_k)\in J(\lambda)$ one has
$$
\lambda x_1\ldots x_k\equiv x_2\ldots x_{k}+\ldots+x_1\ldots x_{k-1}\pmod p,
$$
implying
$$
(x_1\ldots x_k,\,  x_2\ldots x_{k}+\ldots+x_1\ldots x_{k-1})\in \Gamma_{\lambda}\cap D.
$$
Thus, for $\lambda\in \Omega$ we have $\mu_1\le 1$. We split the set  $\Omega$
into two subsets:
$$
\Omega'=\{\lambda\in \Omega:\quad \mu_2\le 1\},\qquad
\Omega''=\{\lambda\in \Omega:\quad \mu_2> 1\}.
$$
We have
\begin{equation}
\label{eqn:Kloost Kar Range J2k S epsilon}
\begin{split}
J_{2k} = \sum_{\lambda\in \Omega'}|J(\lambda)|^2+\sum_{\lambda\in \Omega''}|J(\lambda)|^2.
\end{split}
\end{equation}

\bigskip

{\it Case~1\/}:  $\lambda\in \Omega'$, that is $\mu_2\le 1$. Let $(u_i,v_i)\in \mu_i D\cap\Gamma_{\lambda},\, i=1,2,$ be linearly independent.
Then
$$
0\not=\det \(\begin{array}{cc}
    u_1&v_1\\
    u_2&v_2\\
  \end{array}\)\equiv 0\pmod p,
$$
whence
$$
\Bigl|\det \(\begin{array}{cc}
    u_1&v_1\\
    u_2&v_2\\
  \end{array}\)\Bigr|\ge p.
$$
Also
$$
\Bigl|\det \(\begin{array}{cc}
    u_1&v_1\\
    u_2&v_2\\
  \end{array}\)\Bigr|\le 2k\mu_1\mu_2 N^{2k-1}\le \frac{30kN^{2k-1}}{|\Gamma\cap D|}.
$$
Thus, for $\lambda\in \Omega'$,  the number $|\Gamma_{\lambda}\cap D|$ of solutions of the congruence
$$
\lambda u\equiv v\pmod p
$$
in integers $u,v$ with $|u|\le N^k,\, |v|\le kN^{k-1}$ is bounded by
\begin{equation}
\label{eqn:Gammalambda cap D le}
|\Gamma_{\lambda}\cap D|\le \frac{30kN^{2k-1}}{p}.
\end{equation}
Therefore, if we denote by $S(u,v)$ the set of $k$-tuples $(x_1,\ldots,x_k)$ of positive integers $x_1,\ldots,x_k\le N$ with
$$
x_1\ldots x_k=u,\quad x_2\ldots x_{k}+\ldots+x_1\ldots x_{k-1}= v,
$$
we get
$$
\sum_{\lambda\in\Omega'}|J(\lambda)|^2=\sum_{\lambda\in\Omega'}\Bigl(\sum_{(u,v)\in\Gamma_{\lambda}\cap D}\,\,\sum_{(x_1,\ldots,x_k)\in S(u,v)}1\Bigr)^2.
$$
Applying the Cauchy-Schwarz inequality and taking into account~\eqref{eqn:Gammalambda cap D le}, we get
$$
\sum_{\lambda\in\Omega'}|J(\lambda)|^2=\frac{30kN^{2k-1}}{p}\sum_{\lambda\in\Omega'}\sum_{(u,v)\in\Gamma_{\lambda}\cap D}\Bigl(\sum_{(x_1,\ldots,x_k)\in S(u,v)}1\Bigr)^2
$$
The summation on the right hand side is clearly bounded by the number of solutions of the system of equations
$$
\left\{\begin{array}{llll}
x_1\ldots x_k=y_1\ldots y_k,\\
x_1\ldots x_{k-1}+\ldots+x_2\ldots x_{k}=y_2\ldots y_{k}+\ldots+y_1\ldots y_{k-1},
\end{array}
\right.
$$
in positive integers $x_i,y_j\le N.$ Hence, by Lemma~\ref{lem:Karatsuba}, it follows that
\begin{equation}
\label{eqn:sum over Omega prima}
\sum_{\lambda\in\Omega'}|J(\lambda)|^2<30k(2k)^{80k^3}(\log N)^{4k^2}\frac{N^{3k-1}}{p}.
\end{equation}

\bigskip

{\it Case~2\/}:  $\lambda\in \Omega''$, that is $\mu_2 > 1$. Then the vectors from $\Gamma\cap D$ are linearly dependent and
in particular there is some $\widehat{\lambda}\in\Q$ such that
$$
\widehat{\lambda}x_1\ldots x_k=x_2\ldots x_{k}+\ldots+x_1\ldots x_{k-1} \quad {\rm for} \quad (x_1,\ldots,x_k)\in J(\lambda).
$$
Thus,
\begin{equation*}
\begin{split}
&\sum_{\lambda\in \Omega''}|J(\lambda)|^2\le \sum_{\widehat{\lambda}\in \Q}\Bigl|\{(x_1,\ldots,x_k)\in I^k:\, \frac{1}{x_1}+\ldots+\frac{1}{x_k}=\widehat{\lambda}\Bigr|^2\\=
&\Bigl|\Bigl\{(x_1,\ldots,x_{2k})\in [1,N]^{2k}: \quad \frac{1}{x_1}+\ldots+\frac{1}{x_k}=\frac{1}{x_{k+1}}+\ldots+\frac{1}{x_{2k}}\Bigr\}\Bigr|\\
< &(2k)^{80k^3}(\log N)^{4k^2}N^k.
\end{split}
\end{equation*}
Inserting this and~\eqref{eqn:sum over Omega prima} into~\eqref{eqn:Kloost Kar Range J2k S epsilon}, we obtain
\begin{equation*}
\begin{split}
J_{2k}<(2k)^{90k^3}(\log N)^{4k^2}\Bigl(\frac{N^{2k-1}}{p}+1\Bigr)N^{k}
\end{split}
\end{equation*}
which concludes the proof of Theorem~\ref{thm:kI*=kI* I=[1,N]}.

The proof of Theorem~\ref{thm:kI*=kI* I=[1,N] prime} follows the same line  with the only difference
that instead of Lemma~\ref{lem:Karatsuba} one should apply the bound
\begin{equation*}
\begin{split}
\Bigl|\Bigl\{(x_1,\ldots,x_{2k})\in ([1,N]\cap \cP)^{2k}:&\quad \frac{1}{x_1}+\ldots+\frac{1}{x_k}=\frac{1}{x_{k+1}}+\ldots+\frac{1}{x_{2k}}\Bigr\}\Bigr|\\
< &(2k)^{k}\Bigl(\frac{N}{\log N}\Bigr)^k.
\end{split}
\end{equation*}

\section{Proof of Theorems~\ref{thm:Kloost double 18/37}--\ref{thm:Kloost 1/2}}

\subsection{Proof of Theorem~\ref{thm:Kloost double 18/37}}

It suffices to deal with the case  $|I_1|=[p^{1/18}],\, |I_2|=[p^{5/12+\varepsilon}]$ and $\varepsilon<0.1$. Let
$$
W_2=\sum_{x_1\in I_1}\sum_{x_2\in I_2}\alpha_1(x_1)\alpha_2(x_2)e_p(ax_1^*x_2^*).
$$
We take $k=[1/\varepsilon]$ and apply the H\"older inequality;
\begin{equation*}
\begin{split}
|W_2|^{k}\le & |I_1|^{k-1}\sum_{x_1\in I_1}\Bigl|\sum_{x_2\in I_2}\alpha_2(x_2)e_p(ax_1^*x_2^*)\Bigr|^k\\=
&|I_1|^{k-1}\sum_{x_1\in I_1}\Bigl|\sum_{y_1,\ldots,y_k\in I_2}\alpha_2(y_1)\ldots\alpha_2(y_k)e_p(ax_1^*(y_1^*+\ldots+y_k^*))\Bigr|\\=
&|I_1|^{k-1}\sum_{x_1\in I_1}\theta(x_1)\Bigl(\sum_{y_1,\ldots,y_k\in I_2}\alpha_2(y_1)\ldots\alpha_2(y_k)e_p(ax_1^*(y_1^*+\ldots+y_k^*))\Bigr)\\
\le &|I_1|^{k-1} \sum_{y_1,\ldots,y_k\in I_2}\Bigl|\sum_{x_1\in I_1}\theta(x_1)e_p(ax_1^*(y_1^*+\ldots+y_k^*))\Bigr|,
\end{split}
\end{equation*}
where $\theta(x)$ some complex numbers with $|\theta(x)|\le 1$. We again apply the H\"older inequality and obtain
\begin{equation*}
\begin{split}
|W_2|^{3k}\le & |I_1|^{3k-3} |I_2|^{2k}\sum_{y_1,\ldots,y_k\in I_2}\Bigl|\sum_{x_1\in I_1}\theta(x_1)e_p(ax_1^*(y_1^*+\ldots+y_k^*))\Bigr|^3\\=
& |I_1|^{3k-3} |I_2|^{2k}\sum_{\lambda=0}^{p-1}T(\lambda)\Bigl|\sum_{x_1\in I_1}\theta(x_1)e_p(ax_1^*\lambda)\Bigr|^3,
\end{split}
\end{equation*}
where $T(\lambda)$ is the number of solutions of the congruence
$$
y_1^{*}+\ldots+y_k^{*}\equiv \lambda\pmod p,\quad y_i\in I_2.
$$
We apply now the Cauchy-Schwarz inequality and get
\begin{equation*}
\begin{split}
|W_2|^{6k}\le & |I_1|^{6k-6} |I_2|^{4k}\Bigl(\sum_{\lambda=0}^{p-1}T(\lambda)^2\Bigr)\sum_{\lambda=0}^{p-1}\Bigl|\sum_{x_1\in I_1}\theta(x_1)e_p(ax_1^*\lambda)\Bigr|^6.
\end{split}
\end{equation*}
By Theorem~\ref{thm:kI*}, we have
$$
\sum_{\lambda=0}^{p-1}T(\lambda)^2<|I_2|^{2k-2+1/(k+1)+o(1)}<|I_2|^{2k-2+\varepsilon}.
$$
Furthermore,
$$
\sum_{\lambda=0}^{p-1}\Bigl|\sum_{x_1\in I_1}\theta(x_1)e_p(ax_1^*\lambda)\Bigr|^6\le pJ_6,
$$
where $J_6$ is the number of solutions of the congruence
$$
y_1^*+y_2^*+y_3^*\equiv y_4^*+y_5^*+y_6^*\pmod p,\quad y_i\in I_1.
$$
By Theorem~\ref{thm:3I*=3I* small |I|}, we have $J_6=|I_1|^{3+o(1)}$. Thus,
$$
|W_2|^{6k}\le p|I_1|^{6k-3+o(1)}|I_2|^{6k-2+\varepsilon}.
$$
Since $p\ll |I_1|^3|I_2|^2p^{-2\varepsilon}$, the result follows.

\subsection{Proof of Theorems~\ref{thm:Kloost Karatsuba range},~\ref{thm: Conseq Cor 3},~\ref{thm:Th1GenBil}}

Let
$$
S=\sum_{x_1\in I_1}\sum_{x_2\in I_2}\alpha_1(x_1) \alpha_2(x_2)e_p(ax_1^*x_2^*).
$$
Then by H\"{o}lder's inequality
$$
|S|^{k_2}\le N_1^{k_2-1}\sum_{x_1\in I_1}\Bigl|\sum_{x_2\in I_2} \alpha_2(x_2)e_p(ax_1^*x_2^*)\Bigr|^{k_2}.
$$
Thus, for some $\sigma(x_1)\in\C,\, |\sigma(x_1)|=1$,
$$
|S|^{k_2}\le N_1^{k_2-1}\sum_{y_1,\ldots,y_{k_2}\in I_2}\Bigl|\sum_{x_1\in I_1} \sigma(x_1)e_p(ax_1^*(y_1^*+\ldots+y_{k_2}^*)\Bigr|.
$$
Again by H\"{o}lder's inequality,
$$
|S|^{k_1k_2}\le N_1^{k_1k_2-k_1}N_2^{k_1k_2-k_2}\sum_{\lambda=0}^{p-1}J_{k_2}(\lambda;N_2)\Bigl|\sum_{x_1\in I_1} \sigma(x_1)e_p(ax_1^*\lambda\Bigr|^{k_1},
$$
where $J_k(\lambda; N)$ is the number of solutions of the congruence
$$
x_1^*+\ldots+x_k^*\equiv \lambda\pmod p,\qquad x_i\in [1,N].
$$
Then applying the Cauchy-Schwarz inequality and using
$$
\sum_{\lambda=0}^{p-1}J_{k_2}(\lambda; N_2)^2=J_{2k_2}(N_2),\qquad \sum_{\lambda=0}^{p-1}\Bigl|\sum_{x_1\in I_1} \sigma(x_1)e_p(ax_1^*\lambda\Bigr|^{2k_1}\le
pJ_{2k_1}(N_1).
$$
we get
\begin{equation}
\label{eqn:th91011}
|S|^{2k_1k_2}\le pN_1^{2k_1k_2-2k_1}N_2^{2k_1k_2-2k_2}J_{2k_1}(N_1)J_{2k_2}(N_2).
\end{equation}
Applying Theorem~\ref{thm:kI*=kI* I=[1,N]}, we obtain
\begin{equation*}
\begin{split}
|S|^{2k_1k_2}\le &(2k_1)^{90k_1^3}(2k_2)^{90k_2^3}(\log N_1)^{4k_1^2}(\log N_2)^{4k_2^2}\times \\
\times &N_1^{2k_1k_2}N_2^{2k_1k_2}\Bigl(\frac{N_1^{k_1-1}}{p^{1/2}}+\frac{p^{1/2}}{N^{k_1}}\Bigr)
\Bigl(\frac{N_2^{k_2-1}}{p^{1/2}}+\frac{p^{1/2}}{N^{k_2}}\Bigr).
\end{split}
\end{equation*}
Thus,
\begin{equation*}
\begin{split}
|S|< & (2k_1)^{45k_1^2/k_2}(2k_2)^{45k_2^2/k_1}(\log p)^{2(\frac{k_1}{k_2}+\frac{k_2}{k_1})}\times \\
\times &\Bigl(\frac{N_1^{k_1-1}}{p^{1/2}}+\frac{p^{1/2}}{N^{k_1}}\Bigr)^{1/(2k_1k_2)}
\Bigl(\frac{N_2^{k_2-1}}{p^{1/2}}+\frac{p^{1/2}}{N^{k_2}}\Bigr)^{1/(2k_1k_2)}N_1N_2,
\end{split}
\end{equation*}
which finishes the proof of Theorem~\ref{thm:Kloost Karatsuba range}.

To prove Theorem~\ref{thm: Conseq Cor 3}, we note that if $1\le N<p,$ then as a consequence of Corollary~\ref{cor:CillGar}, the number of solutions of the congruence
$$
\frac{1}{x_1}+\frac{1}{x_2}\equiv \frac{1}{x_3}+\frac{1}{x_4}\pmod p,\quad L+1\le x_1,x_2,x_3,x_4\le L+N
$$
is bounded by $N^{2+o(1)}(N^{3/2}p^{-1/2}+1)$. Following the proof of Theorem~\ref{thm:Kloost Karatsuba range} with $k_1=k_2=2$ and applying this bound with $[L+1,L+N]=I_i$, we derive  Theorem~\ref{thm: Conseq Cor 3}.

To prove Theorem~\ref{thm:Th1GenBil} we use~\eqref{eqn:th91011}, where in this case $J_{2k_i}(N_i)$ is the number of solutions of the congruence
$$
x_1^{-1}+\ldots+x_k^{-1}=x_{k+1}^{-1}+\ldots+x_{2k}^{-1},\qquad (x_1,\ldots,x_{2k})\in I_i^{2k}.
$$
Since
$$
N_1<p^{\frac{k_1+1}{2k_1}},\qquad N_2<p^{\frac{k_2+1}{2k_2}},
$$
by Theorem~\ref{thm:kI*} we have
$$
J_{2k_1}(N_1)<N_1^{\frac{2k_1^2}{k_1+1}},\qquad J_{2k_2}(N_2)<N_2^{\frac{2k_2^2}{k_2+1}}.
$$
Incorporating this in~\eqref{eqn:th91011}, the result follows.

\subsection{Proof of Theorem~\ref{thm:Kloost 1/3}}

Denote
$$
W_n=\Bigr|\sum_{x_1\in I_1}\ldots\sum_{x_n\in I_n}\alpha_1(x_1)\ldots \alpha_n(x_n)e_p(ax_1^*\ldots x_n^*)\Bigl|.
$$
Applying $n$ times the H\"older inequality and using that for $|\alpha(v)|\le 1$ one has
$$
\sum_{u}\Bigl|\sum_{v}\alpha(v)e_p(auv)\Bigr|^6\le \sum_{v_1,\ldots,v_6}\Bigl|\sum_{u}e_p(a(v_1+v_2+v_3-v_4-v_5-v_6)u)\Bigr|,
$$
it follows that
\begin{equation*}
\begin{split}
W_n^{6^n}&\le N^{n6^n-6n}\times\\
\times &\sum_{x_{11},\ldots, x_{16}\in I_1}\ldots \sum_{x_{n1},\ldots x_{n6}\in I_n}e_p(a(x_{11}^*+\ldots-x_{16}^*)\ldots(x_{n1}^*+\ldots-x_{n6}^*)).
\end{split}
\end{equation*}
We can fix $x_{j4},x_{j5},x_{j6}$ such that for some  integers $c_j$
$$
W_n^{6^n}\le N^{n6^n-3n}
\Bigl|\sum_{x_{11}, x_{12}, x_{13}\in I_1}\ldots \sum_{x_{n1},x_{n2},x_{n3}\in I_n}\{...\}\Bigr|
$$
where in the brackets $\{...\}$ we have the expression
$$
e_p(a(x_{11}^*+x_{12}^*+x_{13}^*-c_1)\ldots(x_{n1}^*+x_{n2}^*+x_{n3}^*-c_n)).
$$
Let $T_j(\lambda)$, $j=1,\ldots,n$, be the number of solutions of the congruence
$$
x_1^*+x_2^*+x_3^*-c_j\equiv\lambda\pmod p,\quad x_1,x_2,x_3\in I_j.
$$
Then we get
\begin{equation}
\label{eqn:W_n 6^n}
W_n^{6^n}\le N^{n6^n-3n}
\Bigl|\sum_{\lambda_1=0}^{p-1}\ldots \sum_{\lambda_n=0}^{p-1}T_1(\lambda_1)\ldots T_{n}(\lambda_n)e_p(a\lambda_1\ldots\lambda_n)\Bigr|.
\end{equation}
Now we observe that
$$
\sum_{\lambda=0}^{p-1}\frac{T_j(\lambda)}{N^3}\le 1
$$
and also by Theorem~\ref{thm:3I*=3I* small |I|} we have
$$
\sum_{\lambda}\Bigl(\frac{T_j(\lambda)}{N^3}\Bigr)^2<N^{-3+o(1)}.
$$
Furthermore,
$$
\prod_{j=1}^{n} \Bigl(\sum_{\lambda}\Bigl(\frac{T_j(\lambda)}{N^3}\Bigr)^2\Bigr)^{1/2}<N^{-3n/2+o(1)}<p^{-1/2-\delta}
$$
for some $\delta=\delta(\varepsilon,n)>0.$ Thus, we can apply Lemma~\ref{lem:B2} with
$$
\gamma_{j}(x)=\frac{T_j(x)}{N^3}.
$$
This implies that
$$
\Bigl|\sum_{\lambda_n=0}^{p-1}T_1(\lambda_1)\ldots T_{n}(\lambda_n)e_p(a\lambda_1\ldots\lambda_n)\Bigr|<N^{3n}p^{-\delta'}.
$$
Inserting this into~\eqref{eqn:W_n 6^n}, we conclude the proof.

\subsection{Proof of Theorem~\ref{thm:Kloost epsilon}}

Let $c\le 1/4$ be the constant that satisfies Theorem~\ref{thm:kI*=kI* small |I|} and take $C=9c^{-2}$. In particular, we can assume that $n>3c^{-1}$.
Clearly, we can also assume that  $N=[p^{9/(cn^2)}]$. For  $k=[cn/3]$ we have
$$
p^{c/k^2}\ge p^{9/(cn^2)}>N.
$$
Thus, for every $j$ the number of solutions of the congruence
$$
y_1^{*}+\ldots+y_k^{*}\equiv y_{k+1}^{*}+\ldots+y_{2k}^{*}\pmod p,\quad y_1,\ldots,y_{2k}\in I_j,
$$
is bounded by $N^{k+o(1)}$. Letting
$$
W_n=\Bigl|\sum_{x_1\in I_1}\ldots\sum_{x_n\in I_n}\alpha_1(x_1)\ldots \alpha_n(x_n)e_p(ax_1^*\ldots x_n^*)\Bigr|,
$$
we have
$$
W_n^{(2k)^n}\le N^{n(2k)^n-2kn}\sum_{x_{11},\ldots,x_{(2k)1}\in I_1}\ldots\sum_{x_{1n},\ldots,x_{(2k)n}\in I_n}e_p(a\{...\})
$$
where $\{...\}$ denotes
$$
\prod_{j=1}^n(x_{1j}^*+\ldots+x_{kj}^*-x_{(k+1)j}^*-\ldots-x_{(2k)j}^*).
$$
We can fix $x_{(k+i)j}$ for all $i=1,\ldots,k$ and $j=1,\ldots,n$ such that for some integers $c_1,\ldots,c_n$ we have
$$
W_n^{(2k)^n}\le N^{n(2k)^n-kn}\,\Bigl|\sum_{x_{11},\ldots,x_{k1}\in I_1}\ldots\sum_{x_{1n},\ldots,x_{kn}\in I_n}e_p(a\{...\})\Bigr|
$$
where $\{...\}$ denotes
$$
(x_{11}^*+\ldots+x_{k1}^*-c_1)\ldots(x_{1n}^*+\ldots+x_{kn}^*-c_n).
$$
Thus,
\begin{equation}
\label{eqn:W_n (2k)^n NNNN}
W_n^{(2k)^n}\le N^{n(2k)^n-kn}\,\Bigl|\sum_{\lambda_1=0}^{p-1}\ldots\sum_{\lambda_n=0}^{p-1}T_1(\lambda_1)\ldots T_n(\lambda_n)e_p(a\lambda_1\ldots \lambda_n)\Bigr|,
\end{equation}
where $T_j(\lambda_j)$ is the number of solutions of the congruence
$$
y_1^{*}+\ldots+y_k^{*}-c_j\equiv\lambda_j\pmod p,\quad (y_1,\ldots,y_k)\in I_j^k.
$$
We have
$$
\sum_{\lambda_j=0}^{p-1}\frac{T_j(\lambda_j)}{N^{k}}\le 1.
$$
Furthermore, by Theorem~\ref{thm:kI*=kI* small |I|}
$$
\sum_{\lambda_j=0}^{p-1}\Bigl(\frac{T_j(\lambda_j)}{N^{k}}\Bigr)^2<\frac{N^{k+o(1)}}{N^{2k}}<N^{-k+o(1)}<p^{-\delta}
$$
and
$$
\prod_{j=1}^n\Bigl(\sum_{\lambda_j=0}^{p-1}\Bigl(\frac{T_j(\lambda_j)}{N^{k}}\Bigr)^2\Bigr)^{1/2}<N^{-kn/2}p^{o(1)}<p^{-1/2-\delta}
$$
for some $\delta=\delta(\varepsilon,n)>0.$ Here we used that
$$
N=[p^{9/(cn^2)}];\qquad N^{kn/2}>N^{cn^2/12}\gg p^{3/4}.
$$
Thus, we can apply Lemma~\ref{lem:B2}, leading to
$$
\sum_{\lambda_1=0}^{p-1}\ldots\sum_{\lambda_n=0}^{p-1}T_1(\lambda_1)\ldots T_n(\lambda_n)e_p(a\lambda_1\ldots \lambda_n)<N^{2kn}p^{-\delta'}
$$
for some $\delta'=\delta'(\varepsilon,n)>0$. Joining this with~\eqref{eqn:W_n (2k)^n NNNN}, we conclude the proof of Theorem~\ref{thm:Kloost epsilon}.

\subsection{Proof of Theorem~\ref{thm:Kloost 1/2}}

Put $N_j=|I_j|$.  Removing, if necessary, intervals $I_j$ with $N_j\le p^{\varepsilon/(2n)}$ we can easily reduce the problem to the case when $N_j>p^{\varepsilon/(2n)}$ for all $j$ and $N_1\ldots N_n\ge p^{1/2+\varepsilon}$. Also note that if $N_j\ge p^{1/2+\varepsilon/(10n)}$, then the claim follows
from Weil's bound for incomplete Kloosterman sums. Thus, we can also assume that $N_j<p^{1/2+\varepsilon/(10n)}$ for all $j$. Then those intervals $I_j$  for which $N_j>p^{1/2}$ we refine
to subintervals of sizes $\approx p^{1/2}$ and thus can assume that $
p^{\varepsilon/(2n)}<N_j<p^{1/2}$ for all $j$. Again we can refine the intervals in an obvious way and eventually reduce the problem to the case when
$$
p^{1/2+\varepsilon}< N_1\ldots N_n< 2p^{1/2+\varepsilon}
$$
and
$$
p^{\varepsilon/(2n)}<N_j<p^{1/2},\quad j=1,2,\ldots, n.
$$

Let
$$
W_n=\Bigr|\sum_{x_1\in I_1}\ldots\sum_{x_n\in I_n}\alpha_1(x_1)\ldots \alpha_n(x_n)e_p(ax_1^*\ldots x_n^*)\Bigl|.
$$
Taking $k=[2/\varepsilon]$ and consequently applying H\"older's inequality $n$ times, we get
\begin{equation}
\label{eqn:W_n (2k)^n}
\begin{split}
W_n^{(2k)^n}\le &(N_1\ldots N_n)^{(2k)^n-2k}\times \\
&\sum_{\lambda_1=0}^{p-1}\ldots\sum_{\lambda_n=0}^{p-1}T_1(\lambda_1)\ldots T_n(\lambda_n)e_p(a\lambda_1\ldots \lambda_n),
\end{split}
\end{equation}
where $T_j(\lambda_j)$ is the number of solutions of the congruence
$$
(y_1^{*}+\ldots+y_k^{*})-(y_{k+1}^{*}+\ldots+y_{2k}^{*})\equiv\lambda_j\pmod p,\quad (y_1,\ldots,y_{2k})\in I_j^{2k}.
$$
Now we observe that
$$
\sum_{\lambda_j=0}^{p-1}\frac{T_j(\lambda_j)}{N_j^{2k}}\le 1.
$$
Furthermore, by Theorem~\ref{thm:kI*}
$$
\sum_{\lambda_j=0}^{p-1}\Bigl(\frac{T_j(\lambda_j)}{N_j^{2k}}\Bigr)^2<\frac{N_j^{4k-2+1/(2k+1)+o(1)}}{N_j^{4k}}<N_j^{-2+1/(2k+1)+o(1)}<p^{-\delta}
$$
and
$$
\prod_{j=1}^n\Bigl(\sum_{\lambda_j=0}^{p-1}\Bigl(\frac{T_j(\lambda_j)}{N_j^{2k}}\Bigr)^2\Bigr)^{1/2}<(N_1\ldots N_n)^{-1}p^{0.5\varepsilon}<p^{-1/2-\delta}
$$
for some $\delta=\delta(\varepsilon,n)>0.$ Thus, we can apply Lemma~\ref{lem:B2}, leading to
$$
\sum_{\lambda_1=0}^{p-1}\ldots\sum_{\lambda_n=0}^{p-1}T_1(\lambda_1)\ldots T_n(\lambda_n)e_p(a\lambda_1\ldots \lambda_n)<(N_1\ldots N_n)^{2k}p^{-\delta'}
$$
for some $\delta'=\delta'(\varepsilon,n)>0$. Joining this with~\eqref{eqn:W_n (2k)^n}, we conclude the proof of Theorem~\ref{thm:Kloost 1/2}.

\section{Proof of Theorem~\ref{thm:Archimed}}

We need the following consequence of~\cite[Theorem 7]{B3}. Let $\mu,\nu$ be positive probability measures on $\R$ supported on $[-1,1]$, $\alpha,\,\beta$ complex functions on $\R$; \, $|\alpha|,|\beta|\le 1$. Let $\xi\in\R,\,|\xi|>1$. Then
\begin{equation}
\label{eqn:B3Measures}
\Bigl|\int\int \alpha(x)\beta(y)e^{ixy\xi}\mu(dx)\nu(dy)\Bigr|\ll |\xi|^{-1/2}\,\|\mu*\varphi_{\delta}\|_{_2}\,\|\nu*\varphi_{\delta}\|_{_2},
\end{equation}
where $\delta=(100|\xi|)^{-1}$ and
$$
\varphi_{\delta}(t)=\left\{\begin{array}{ll}\delta^{-1} \quad {\rm if}\quad t\in[-\frac{\delta}{2},\frac{\delta}{2}],\\ 0\quad {\rm \quad otherwise.}\end{array}\right.
$$ Note that~\eqref{eqn:B3Measures} is very simple, the paper~\cite{B3} contains
also multi-linear versions derived from discretized ring theorem, but we do not need them here.

Let us give a direct proof of~\eqref{eqn:B3Measures}. For the brevity write $\varphi=\varphi_\delta$. Since
$$
\widehat{\varphi}(\lambda)=\frac{2}{\delta\lambda}\,\sin{\frac{\delta\lambda}{2}}
$$
we have, for $|\lambda|<(10\delta)^{-1}=10\xi$ and $|y|\le 1$,
$$
\frac{1}{2}<\widehat{\varphi}(\lambda)\le 1,\qquad\Bigl|\frac{\beta(y)}{\widehat{\varphi}(\xi y)}\Bigr|\le 2.
$$
Write for $x,y\in [-1,1]$
$$
e^{i\xi xy}=\frac{1}{\widehat{\varphi}(\xi y)}\int\varphi(s-x)e^{i\xi sy}ds.
$$
Hence,
\begin{equation}
\label{eqn:propb}
\begin{split}
\Bigl|&\int\int \alpha(x)\beta(y)e^{ixy\xi}\mu(dx)\nu(dy)\Bigr|\le\\
&\int\int\Bigl|\int\frac{\beta(y)}{\widehat{\varphi}(\xi y)}e^{i\xi sy}\nu(dy)\Bigr|\varphi(s-x)\mu(dx)ds=\\
&\quad\int\widehat{\nu_1}(\xi s)(\mu*\varphi)(s)ds,
\end{split}
\end{equation}
where
$$
\frac{d\nu_1}{d\nu}=\frac{\beta(y)}{\widehat{\varphi}(\xi y)},\quad{\rm hence}\quad |\nu_1|\le 2\nu.
$$
Since supp$(\mu*\varphi)\subset [-2,2]$, it follows from~\eqref{eqn:propb} that
$$
\Bigl|\int\int \alpha(x)\beta(y)e^{ixy\xi}\mu(dx)\nu(dy)\Bigr|\le\|\mu*\phi\|_{_2}\,\,\|\widehat{\nu_1}(\xi \cdot)\|_{_{L^2[-2,2]}}.
$$
Next, for $|s|\le 2$
$$
|\widehat{\nu_1}(\xi s)|\le 2|\widehat{\nu_1}(\xi s)\widehat{\varphi}(\xi s)|=2|(\nu_1*\varphi)^{\widehat{}}\,\,(\xi s)|.
$$
Therefore
\begin{equation*}
\begin{split}
\|\widehat{\nu_1}(\xi \cdot)\|_{_L^2[-2,2]} &\le 2|\xi|^{-1/2}\|(\nu_1*\varphi)^{\widehat{}}\,\,\|_{_2}\\
&=2|\xi|^{-1/2}(2\pi)^{1/2}\|\nu_1*\varphi\|_{_2}\\
&\le 4(2\pi)^{1/2}|\xi|^{-1/2}\|\nu*\varphi\|_{_2},
\end{split}
\end{equation*}
and inequality~\eqref{eqn:B3Measures} follows.

Let $e(z)=e^{i z}$. Denoting
$$
S=\sum_{\substack{n_1\sim N_1\\ n_2\sim N_2}}e\Bigl(\frac{1}{n_1}\frac{1}{n_2}\,\xi\Bigr),
$$
we estimate
\begin{equation*}
\begin{split}
|S|^{k_1k_2}&\le N_1^{k_1k_2-k_1}N_2^{k_1k_2-k_2}\times\\
&\times \sum_{\substack{n_{11},\ldots,n_{1k_1}\sim N_1\\n_{21},\ldots,n_{2k_2}\sim N_2}}\quad
\alpha_{\overline{n_1}}\,\beta_{\overline{n_2}}\, e\Bigl(\xi\Bigl(\frac{1}{n_{11}}+\ldots+\frac{1}{n_{1k_1}}\Bigr)\Bigl(\frac{1}{n_{21}}+\ldots+\frac{1}{n_{2k_2}}\Bigr)\Bigr),
\end{split}
\end{equation*}
where $\overline{n_i}=(n_{i1},\ldots,n_{ik_i})$ and
$$
\alpha_{\overline{n_1}}=\alpha_{(\frac{1}{n_{11}}+\ldots+\frac{1}{n_{1k_1}})},\quad \beta_{\overline{n_2}}=\beta_{(\frac{1}{n_{21}}+\ldots+\frac{1}{n_{2k_2}})}
$$
and $|\alpha_{\overline{n_1}}|=|\beta_{\overline{n_2}}|=1.$
Thus, for some complex coefficients
$\alpha(x)$ and $\beta(y)$ with $|\alpha(x)|=|\beta(y)|=1$ we have
$$
|S|^{k_1k_2}\le (N_1N_2)^{k_1k_2}\Bigl|\int\int \alpha(x)\beta(y)e\Bigl(\frac{\xi}{N_1N_2}xy\Bigr)\mu(dx)\nu(dy)\Bigr|,
$$
where $\mu$ is obtained as normalized image measure on $\R$ under the map
$$
\{n_i\sim N_1\}^{k_1}\to \R :\quad (n_{11},\ldots,n_{1k_1})\to N_1\Bigl(\frac{1}{n_{11}}+\ldots+\frac{1}{n_{1k_1}}\Bigr)
$$
and similarly for $\nu$.

Set $\delta=\frac{N_1N_2}{100|\xi|}$. It follows from~\eqref{eqn:B3Measures} that
\begin{equation}
\label{eqn:measureTrigSumpowerk1k2}
|S|^{k_1k_2}\le (N_1N_2)^{k_1k_2}\delta^{1/2}\|\mu*\varphi_{\delta}\|_{_2}\,\|\nu*\varphi_{\delta}\|_{_2}.
\end{equation}
Next we estimate $\|\mu*\varphi_{\delta}\|_{_2}.$ Note that if $I_j$ is a partition of $\R$ in $\delta$-intervals, then
\begin{equation*}
\begin{split}
&\sum_{j}\Bigl|(n_1,\ldots,n_k); \,n_i\sim N\quad {\rm and}\quad \frac{N}{n_1}+\ldots+\frac{N}{n_k}\in I_j\Bigr|\le \\
&\Bigl|(n_1,\ldots,n_{2k}); \,n_i\sim N\quad {\rm and}\quad \Bigl|\frac{1}{n_1}+\ldots+\frac{1}{n_k}-\frac{1}{n_{k+1}}-\ldots-\frac{1}{n_{2k}}\Bigr|<\frac{\delta}{N}\Bigr|.
\end{split}
\end{equation*}
Thus, it follows that
\begin{equation}
\label{eqn:mu convolution Pdelta}
\|\mu*\varphi_{\delta}\|_{_2}^2\sim N_1^{-2k_1}\delta^{-1}T(N_1),
\end{equation}
where
$$
T(N)=\Bigl|(n_1,\ldots,n_{2k}); \,n_i\sim N\,\, {\rm and}\,\, \Bigl|\frac{1}{n_1}+\ldots+\frac{1}{n_k}-\frac{1}{n_{k+1}}-\ldots-\frac{1}{n_{2k}}\Bigr|<\frac{\delta}{N}\Bigr|.
$$
Let us prove that
$$
T(N)<c(k)(\log N)^{4k^2}N^{k}(1+\delta N^{2k-1}).
$$
For $\lambda\in\Q$, denote $J(\lambda)$ the number of solutions of representations of $\lambda$ as
\begin{equation}
\label{eqn:measureRepresentLambda}
\lambda=\frac{1}{n_1}+\ldots+\frac{1}{n_k},\quad n_i\sim N.
\end{equation}
By Lemma~\ref{lem:Karatsuba},
$$
\sum_{\lambda}J(\lambda)^{2}\le c(k)N^k(\log N)^{4k^2}.
$$
Also note that different $\lambda$'s are at least $\sim N^{-2k}$ separated. Hence an interval $I\subset \R$ of size $\delta/N$ contains at most
$1+N^{2k-1}\delta$ elements of the form~\eqref{eqn:measureRepresentLambda}. Therefore,
\begin{equation*}
\begin{split}
T(N)\le&\sum_{n_1,\ldots,n_k\sim N}J\Bigl(\frac{1}{n_1}+\ldots+\frac{1}{n_k}\Bigr)(1+N^{2k-1}\delta)\\
\le &(1+N^{2k-1}\delta)\sum_{\lambda}J(\lambda)^2<c(k)N^k(\log N)^{4k^2}(1+N^{2k-1}\delta),
\end{split}
\end{equation*}
which establishes the required bound for $T(N)$.

Thus, from~\eqref{eqn:mu convolution Pdelta} we get
$$
\|\mu*\varphi_{\delta}\|_{_2}\le c(k_1)(\log N_1)^{2k_1}\delta^{-1/2}N_1^{-k_1/2}(1+\delta N_1^{2k_1-1})^{1/2}.
$$
Similarly,
$$
\|\nu*\varphi_{\delta}\|_{_2}\le c(k_2)(\log N_2)^{2k_2}\delta^{-1/2}N_2^{-k_2/2}(1+\delta N_2^{2k_2-1})^{1/2}.
$$
Inserting these bounds into~\eqref{eqn:measureTrigSumpowerk1k2}, we get
\begin{equation*}
\begin{split}
|S|^{k_1k_2}\le& c(k_1)c(k_2)(\log N_1)^{2k_1}(\log N_2)^{2k_2}(N_1N_2)^{k_1k_2}\times\\
\times &\Bigl(\delta^{-1/2}N_1^{-k_1}+
\delta^{1/2}N_1^{k_1-1}\Bigr)^{1/2}\Bigl(\delta^{-1/2}N_2^{-k_2}+\delta^{1/2}N_2^{k_2-1}\Bigr)^{1/2}.
\end{split}
\end{equation*}
Recalling that $\delta=\frac{N_1N_2}{100|\xi|}$ we conclude the proof.

\section{Some applications}

In this section we apply Theorem~\ref{thm:Archimed} to prove Theorem~\ref{thm:pix-pix-y} on $\pi(x)-\pi(x-y)$ , and apply trilinear exponential sum bounds
to prove Theorem~\ref{thm:linear sqrt log p} on a linear Kloosterman sums and Theorem~\ref{thm:BrunTitch} on Brun-Titchmarsh theorem.

\subsection{Proof of Theorem~\ref{thm:pix-pix-y}}

In view of Huxley's result~\cite{HX} we can assume that $y<x^{7/12+}$. The function
$$
\frac{2(1-\theta)}{12(\theta^{-1}+1)(\theta^{-1}+0.5)+1-\theta}
$$
increases in $\theta\in[0,7/12]$, so we can assume that $y=x^{\theta}$.  Going over the argument of~\cite[p.269]{FrIw1}, (13.56) gives a bound on $R(M,N)$ of the form
$$
\frac{Hy}{MN}\,\Bigl| \sum_{\substack{m\sim M\\ n\sim N}}\alpha_m\beta_n e\Bigl(\frac{hu}{mn}\Bigr)\Bigr|
$$
with $u\sim x$ and $1\le h\le H=MNy^{-1}x^{\varepsilon}$. Here $M,N$ may be chosen arbitrarily with
$$
MN=D>y
$$
(see~\cite[Theorem 12.21]{FrIw1}) and we need to ensure that
$$
R(M,N)<yx^{-\varepsilon}.
$$
We may then state an upper bound
$$
\pi(x)-\pi(x-y)<\frac{2y}{\log D}.
$$
Thus,
\begin{equation}
\label{eqn:measureRMNpix}
\frac{Hy}{MN}\Bigl| \sum_{\substack{m\sim M\\ n\sim N}}\alpha_m\beta_n e\Bigl(\frac{hu}{mn}\Bigr)\Bigr|\le x^{\varepsilon}\Bigl|\sum_{\substack{m\sim M\\ n\sim N}}\alpha_m\beta_n e\Bigl(\frac{\xi}{mn}\Bigr)\Bigr|,
\end{equation}
where
$$
D<x\le \xi<\frac{D}{y}\,x^{1+\varepsilon}.
$$
Take $k$ satisfying
$$
k-\frac{1}{2}<\frac{1}{\theta}<k+\frac{1}{2}
$$
and define $M$ by
$$
\frac{x}{D}=M^{2k-1}.
$$
Let
$$
N=\frac{D}{M}
$$
and choose $l$ such that
$$
N^{2(l-1)}\le\frac{\xi}{D}<N^{2l}.
$$
Hence,
$$
\log N=\log D-\frac{\log\frac{x}{D}}{2k-1}\ge \log y-\frac{\log\frac{x}{y}}{2k-1}=\Bigl(\theta-\frac{1-\theta}{2k-1}\Bigr)\log x>\frac{\theta}{2}\log x
$$
and $l\le \theta^{-1}+1$. Bounding~\eqref{eqn:measureRMNpix} by Theorem~\ref{thm:Archimed} gives
\begin{equation*}
\begin{split}
&x^{\varepsilon}D\Bigl(\frac{\xi}{D}M^{-2k}+\frac{D}{\xi}M^{2(k-1)}\Bigr)^{1/(4kl)}\le\\
& x^{\varepsilon}D\Bigl(\frac{Dx^{\varepsilon}}{y}M^{-1}+M^{-1}\Bigr)^{1/(4kl)}<\\
&x^{2\varepsilon}\Bigl(\frac{D}{y}\Bigr)^{5/4}\Bigl(\frac{x}{D}\Bigr)^{-1/(4k(2k-1)l)}y<\\
&x^{2\varepsilon}\Bigl(\frac{D}{y}\Bigr)^{3/2}x^{-\frac{(1-\theta)\theta}{8(\theta^{-1}+1)(\theta^{-1}+0.5)}}y.
\end{split}
\end{equation*}
Hence we may take
$$
D=y^{1+\frac{1-\theta}{12(\theta^{-1}+1)(\theta^{-1}+0.5)}-\varepsilon'}
$$
implying Theorem~\ref{thm:pix-pix-y}.

\subsection{Proof of Theorem~\ref{thm:linear sqrt log p}}

Denote $\varepsilon=\log N/\log p$. As a consequence of the Weil bound on incomplete Kloosterman sums, we can assume that $\varepsilon<4/7$. Let
$$
\cG=\{x<N:\quad p_1\ge N^{\alpha},\, p_3\ge N^{\beta},\, p_1p_2p_3<N^{1-\beta}\},
$$
where $p_1\ge p_2\ge p_3$ are the largest prime factors of $x$ and
$$
0.1>\alpha> \beta>\frac{1}{\log N}
$$
are parameters to specify. Letting $0.1>\beta_1>\beta$ be another parameter, we observe that
$$
\sum_{\substack{x<N\\ p_1p_2>N^{1-\beta_1}}}1\le \sum_{y<N^{\beta_1}}\sum_{p_1p_2\le N/y}1\le\sum_{y<N^{\beta_1}}\sum_{p_2\le N}\frac{3N}{p_2y\log N}\le 4\beta_1 (\log\log N) N.
$$
Similarly,
$$
\sum_{\substack{x<N\\ p_1p_2p_3 \ge N^{1-\beta}}}1<\sum_{y\le N^{\beta}}\sum_{p_2p_3<N}\frac{4N}{yp_2p_3\log N}<5\beta(\log\log N)^2N.
$$
We also note that the number of positive integers not exceeding $N$ and consisting on products of at most two prime numbers is less than
$$
\frac{2N\log\log N}{\log N}<2\beta N\log\log N.
$$

Hence, we have
\begin{equation*}
\begin{split}
N-|\cG| \le & \frac{2N\log\log N}{\log N}+\\
+&\sum_{\substack{x<N\\ p_1<N^{\alpha}}}1+\sum_{\substack{x<N\\ p_1p_2>N^{1-\beta_1}}}1+ \sum_{\substack{x<N\\ p_1p_2\le N^{1-\beta_1}\\ p_3<N^{\beta}}}1+\sum_{\substack{x<N\\ p_1p_2p_3 \ge N^{1-\beta}}}1
\\&\le \frac{2N\log\log N}{\log N}+ \Psi(N,N^{\alpha})+4\beta_1 N\log\log N\\ &+\sum_{p_1p_2<N^{1-\beta_1}}\Psi\Bigl(\frac{N}{p_1p_2}, N^{\beta}\Bigr)+5\beta N(\log\log N)^2.
\end{split}
\end{equation*}
Here $\Psi(x,y)$, as usual, denotes the number of positive integers $\le x$ having no prime divisors $>y$. By the classical result of de Bruin~\cite{Bruin} if $y>(\log x)^{1+\delta}$, where $\delta>0$ is a fixed constant, then
$$
\Psi(x,y)\le xu^{-u(1+o(1))} \quad {\rm as}\quad u=\frac{\log x}{\log y}\to\infty.
$$
Thus, taking
$$
\alpha=\frac{1}{\log\log p},\qquad  \beta_1=\beta \log\log p,\qquad \frac{2\log\log N}{\log N}<\beta<\frac{1}{(\log\log p)^3},
$$
we have
\begin{equation*}
\begin{split}
N-|\cG| & <\alpha^{\frac{1}{2\alpha}}N+\sum_{p_1p_2<N^{1-\beta_1}}\frac{N}{p_1p_2}\Bigl(\frac{\beta}{\beta_1}\Bigr)^{\frac{\beta_1}{2\beta}}+11\beta N(\log\log p)^2\\
&<\Bigl(\alpha^{\frac{1}{2\alpha}}+(\log\log N)^2\Bigl(\frac{\beta}{\beta_1}\Bigr)^{\frac{\beta_1}{2\beta}}+11\beta (\log\log p)^2\Bigr)N\\
&<12\beta(\log\log p)^2 N.
\end{split}
\end{equation*}
Therefore
$$
\Bigl|\sum_{x<N}e_p(ax^*)\Bigr|\le 12\beta(\log\log p)^2 N+\Bigl|\sum_{x\in \cG}e_p(ax^*)\Bigr|.
$$
We can further assume that
$$
\varepsilon\beta>\frac{1}{\sqrt{\log p}}.
$$
The sum $\sum_{x\in \cG}e_p(ax^*)$ may be bounded by
\begin{equation}
\label{eqn:cuadruple sum}
{\sum_{p_1}}{\sum_{p_2}}{\sum_{p_3}}\Bigl|\sum_{y}e_p(ap_1^*p_2^*p_3^*y^*)\Bigr|,
\end{equation}
where the summations are taken over primes $p_1, p_2, p_3$ and integers $y$ such that
\begin{equation}
\label{eqn:primes p1p2p3}
p_1\ge p_2\ge p_3;\quad p_1\ge N^{\alpha}; \quad p_3\ge N^{\beta};\quad p_1p_2p_3\le N^{1-\beta}
\end{equation}
and
$$
y<\frac{N}{p_1p_2p_3};\quad P(y)\le p_3.
$$
Note that if $t$ and $T$ are such that
\begin{equation}
\label{eqn:t and T}
\Bigl(1-\frac{c}{\log p}\Bigr)p_3<t< p_3,\qquad  \Bigl(1-\frac{c}{\log p}\Bigr)\frac{N}{p_1p_2p_3}<T<\frac{N}{p_1p_2p_3},
\end{equation}
where $c>0$ is any constant, then we can substitute the condition on $y$ with
\begin{equation}
\label{eqn:yyy}
P(y)\le t; \quad y<T
\end{equation}
by changing the sum~\eqref{eqn:cuadruple sum} with an additional term of size at most
$$
\frac{N(\log\log p)^{O(1)}}{\log p}.
$$
Thus, for any $t$ and $T$ satisfying~\eqref{eqn:t and T} we have
$$
\Bigl|\sum_{x\in \cG}e_p(ax^*)\Bigr|<\frac{N(\log\log p)^{O(1)}}{\log p}+
{\sum_{p_1}}{\sum_{p_2}}{\sum_{p_3}}\Bigl|\sum_{y}e_p(ap_1^*p_2^*p_3^*y^*)\Bigr|,
$$
where the summations are taken over primes $p_1, p_2, p_3$ and integers $y$ satisfying~\eqref{eqn:primes p1p2p3} and~\eqref{eqn:yyy}.

Now we split the range
of variations of primes $p_1,p_2,p_3$ into subintervals of the form $[L, L+L(\log p)^{-1}]$ and choosing suitable $t$ and $T$ we obtain that for some numbers $M_1,M_2,M_3$ with
\begin{equation}
\label{eqn:M1M2M3 Kloost restrictions}
M_1>0.5M_2>0.2M_3,\quad M_1>N^{\alpha}, \quad M_3\ge N^{\beta},\quad M_1M_2M_3<N^{1-\beta}
\end{equation}
one has
\begin{equation}
\label{eqn:sum x in G Kloost}
\begin{split}
\Bigl|\sum_{x\in \cG}e_p(ax^*)\Bigr|&<\frac{N(\log\log p)^{O(1)}}{\log p}\\&+
(\log p)^{10}{\sum_{p_1\in I_1}}{\sum_{p_2\in I_2}}{\sum_{p_3\in I_3}}\Bigl|\sum_{\substack{y\le M\\ P(y)\le M_3}}e_p(ap_1^*p_2^*p_3^*y^*)\Bigr|,
\end{split}
\end{equation}
where
$$
I_j=\Bigl[M_j, M_j+\frac{M_j}{\log p}\Bigr], \qquad M=\frac{N}{M_1M_2M_3}\ge N^{\beta}.
$$
Denote
$$
W=\sum_{p_1\in I_1}\sum_{p_2\in I_2}\sum_{p_3\in I_3}\Bigl|\sum_{\substack{y\le M\\ P(y)\le M_3}}e_p(ap_1^*p_2^*p_3^*y^*)\Bigr|.
$$
Applying the Cauchy-Schwarz inequality, we get
$$
W^2\le M_1M_2M_3\sum_{y\le M}\sum_{z\le M}\Bigl|\sum_{p_1\in I_1}\sum_{p_2\in I_2}\sum_{p_3\in I_3}e_p\Bigl(ap_1^*p_2^*p_3^*(y^*-z^*)\Bigr)\Bigr|.
$$
Taking into account the contribution from $y=z$ and then fixing $y\not=z$ we get, for some $b\not\equiv 0\pmod p$,
\begin{equation}
\label{eqn:W^2 Kloost}
W^2\le \frac{N^2}{M}+NM|S|,
\end{equation}
where
$$
|S|=\Bigl|\sum_{p_1\in I_1}\sum_{p_2\in I_2}\sum_{p_3\in I_3}e_p(bp_1^*p_2^*p_3^*)\Bigr|.
$$
Define integers $k_1,k_2,k_3$ such that
$$
p^{\frac{1}{2k_j}}\le M_j<p^{\frac{1}{2(k_j-1)}},\quad j=1,2,3.
$$
From~\eqref{eqn:M1M2M3 Kloost restrictions} and the choice of $\alpha$ and $\beta$, it follows, in particular, that
\begin{equation}
\label{eqn:kj Kloost}
k_1<\frac{1}{\varepsilon \alpha}<(\log p)^{1/2},\quad k_2,k_3<\frac{1}{\varepsilon\beta}<(\log p)^{1/2}.
\end{equation}
We further take even integers $l_j\in \{k_j,k_j+1\}$, \, $(j=1,2,3)$ and define
$$
\eta_j(\lambda)=\Bigl|\{(x_1,\ldots,x_{l_j})\in (I_j\cap\cP)^{l_j}:\, x_1^*-x_2^*+\ldots-x_{l_j}^*\equiv \lambda\pmod p\}\Bigr|.
$$
We have
\begin{equation}
\label{eqn:sum of eta j}
\sum_{\lambda=0}^{p-1}\eta_j(\lambda)<M_j^{l_j}
\end{equation}
and, applying Theorem~\ref{thm:kI*=kI* I=[1,N] prime},
\begin{equation}
\label{eqn:sum eta square Kloost}
\begin{split}
\sum_{\lambda=0}^{p-1}\eta_j(\lambda)^2<&M_j^{2(l_j-k_j)}M_j^{k_j}(2k_j)^{k_j}\Bigl(\frac{M_j^{2k_j-1}}{p}+1\Bigr)\\
<& (2k_j)^{k_j}M_j^{2l_j}p^{-1/2}.
\end{split}
\end{equation}
Next we apply consequently  the H\"{o}lder inequality. We  get
$$
|S|^{l_1}< (M_2M_3)^{l_1-1}\sum_{p_2\in I_2}\sum_{p_3\in I_3}\Bigl|\sum_{p_1\in I_1}e_p(bp_1^*p_2^*p_3^*)\Bigr|^{\l_1}
$$
and since we took  $l_j$ even, we further get
\begin{equation*}
\begin{split}
|S|^{l_1}<&(M_2M_3)^{l_1-1}\sum_{\lambda_1=0}^{p-1}\eta_1(\lambda_1)\Bigl[\sum_{p_2\in I_2}\sum_{p_3\in I_3}e_p(b\lambda_1p_2^*p_3^*)\Bigr]\\
<&(M_2M_3)^{l_1-1}\sum_{\lambda_1=0}^{p-1}\eta_1(\lambda_1)\sum_{p_3\in I_3}\Bigl|\sum_{p_2\in I_2}e_p(b\lambda_1p_2^*p_3^*)\Bigr|.
\end{split}
\end{equation*}
Applying the H\"{o}lder inequality and using~\eqref{eqn:sum of eta j}, we obtain
\begin{equation*}
\begin{split}
|S|^{l_1l_2}<&(M_2M_3)^{(l_1-1)l_2}M_1^{(l_2-1)l_1}M_3^{l_2-1}\sum_{\lambda_1=0}^{p-1}\eta_1(\lambda_1)\sum_{p_3\in I_3}\Bigl|\sum_{p_2\in I_2}e_p(b\lambda_1p_2^*p_3^*)\Bigr|^{l_2}\\
=&M_1^{l_1(l_2-1)}M_2^{l_2(l_1-1)}M_3^{l_1l_2-1}\sum_{\lambda_1=0}^{p-1}\sum_{\lambda_2=0}^{p-1}\eta_1(\lambda_1)\eta_2(\lambda_2)\Bigl|\sum_{p_3\in I_3}e_p(b\lambda_1\lambda_2p_3^*)\Bigr|.
\end{split}
\end{equation*}
We again apply the H\"{o}lder inequality and use~\eqref{eqn:sum of eta j},
\begin{equation*}
\begin{split}
|S|^{l_1l_2l_3}&<M_1^{l_1l_3(l_2-1)}M_2^{l_2l_3(l_1-1)}M_3^{l_3(l_1l_2-1)}(M_1^{l_1}M_2^{l_2})^{l_3-1}\times\\
&\times\Bigl|\sum_{\lambda_1=0}^{p-1}\sum_{\lambda_2=0}^{p-1}\sum_{\lambda_3=0}^{p-1}
\eta_1(\lambda_1)\eta_2(\lambda_2)\eta_3(\lambda_3)e_p(b\lambda_1\lambda_2\lambda_3)\Bigr|\\
&=M_1^{l_1l_2l_3-l_1}M_2^{l_1l_2l_3-l_2}M_3^{l_1l_2l_3-l_3}|S_1|,
\end{split}
\end{equation*}
where
$$
|S_1|=\Bigl|\sum_{\lambda_1=0}^{p-1}\sum_{\lambda_2=0}^{p-1}\sum_{\lambda_3=0}^{p-1}
\eta_1(\lambda_1)\eta_2(\lambda_2)\eta_3(\lambda_3)e_p(b\lambda_1\lambda_2\lambda_3)\Bigr|.
$$
We apply Lemma~\ref{lem:B2} with $n=3$ and
$$
\gamma_j(\lambda)=\frac{\eta_j(\lambda)}{M^{\l_j}}.
$$
From~\eqref{eqn:sum of eta j} it follows that
$$
\|\gamma_i\|_1=\sum_{\lambda=0}^{p-1}\frac{\eta_j(\lambda)}{M^{\l_j}}\le 1.
$$
From~\eqref{eqn:sum eta square Kloost} and~\eqref{eqn:kj Kloost} it follows that
$$
\|\gamma_i\|_2=\Bigl(\sum_{\lambda=0}^{p-1}\Bigl(\frac{\eta_j(\lambda)}{M^{\l_j}}\Bigr)^2\Bigr)^{1/2}<p^{-1/5},
$$
and, in particular,
$$
\prod_{i=1}^{n}\|\gamma_i\|_2<p^{-1/2-1/10}.
$$
Thus, Lemma~\ref{lem:B2} applies and leads to
$$
|S_1|<M_1M_2M_3 p^{-c},
$$
for some absolute constant $c>0$. Consequently
$$
|S|<M_1M_2M_3p^{-c_1/(k_1k_2k_3)}<M_1M_2M_3 m^{-c_2\varepsilon^3\alpha\beta^2}.
$$
Inserting this into~\eqref{eqn:W^2 Kloost} and taking into account that
$$
M>N^{\beta}=p^{\varepsilon\beta}>\exp(\sqrt{\log p}),
$$
we get
$$
W<\frac{N}{\exp(0.5\sqrt{\log p})}+N m^{-c_3\varepsilon^3\alpha\beta^2}.
$$
Inserting this into~\eqref{eqn:sum x in G Kloost}, we get
$$
\Bigl|\sum_{x\in \cG}e_p(ax^*)\Bigr|<\frac{N(\log\log p)^{O(1)}}{\log p}+(\log p)^{10}N m^{-c_3\varepsilon^3\alpha\beta^2}.
$$
Therefore
$$
\Bigl|\sum_{x<N}e_p(ax^*)\Bigr|\le \frac{N(\log\log p)^{O(1)}}{\log p}+12\beta(\log\log p)^2 N+(\log p)^{10}N m^{-c_3\varepsilon^3\alpha\beta^2}.
$$
Thus, taking
$$
\beta=\frac{C\log\log p}{\varepsilon^{3/2}(\log p)^{1/2}},
$$
with sufficiently large constant $C$, we obtain
$$
\Bigl|\sum_{x<N}e_p(ax^*)\Bigr|\ll \frac{(\log\log p)^3}{(\log p)^{1/2}}\,\varepsilon^{-3/2}N,
$$
which finishes the proof.

\subsection{Proof of Theorem~\ref{thm:BrunTitch}}

We only treat the case when $q$ is prime, but the argument generalizes.

We follow~\cite{FrIw}. Denote
\begin{equation*}
\begin{split}
&\cA=\{n\le x; \, n\equiv a\pmod q\},\\
&\cA_d=\{n\in\cA; \, n\equiv 0\pmod d\},\\
&S(\cA,z)=|\{n\in \cA;\, (n,p)=1\,\, {\rm for}\,\, p<z, (p,q)=1\}|,\\
& D=\frac{x^{1-\varepsilon}}{q}.
\end{split}
\end{equation*}

Let $D^{1/5}<w<y<z=(x/q)^{1/3}$, where $w$ and $y$ to specify, and write using Buchstab's identity
\begin{equation}
\label{eqn:Buchstab}
\begin{split}
S(\cA,z)=&S(\cA,w)-\sum_{y\le p<z}S(\cA_p,z)-\sum_{w\le p<y}S(\cA_p,w)\\
+&\sum_{w\le p_1<p_2<y}S(\cA_{p_1p_2},p_2).
\end{split}
\end{equation}
Applying the basic estimates of the linear sieve on each of the terms in~\eqref{eqn:Buchstab}
leads to the bound
\begin{equation}
\label{eqn:BT c=2 S(A,z)}
S(\cA,z)<\frac{2x}{\phi(q)\log D},
\end{equation}
see the discussion in~\cite{FrIw} and also~\cite[p.265]{FrIw1}. In particular this involves bounding
\begin{equation}
\label{eqn:BTc=2}
S(\cA_{p_1p_2}, p_2)<\frac{2x}{\phi(q)p_1p_2\log D_{12}}\quad {\rm with} \quad D_{12}=\frac{D}{p_1p_2}.
\end{equation}
Here $D_{12}$ is the level of distribution for the sequence $\cA_{p_1p_2}$. The idea from~\cite{FrIw}
is to improve on~\eqref{eqn:BTc=2}, in the average over $p_1,p_2$, by increasing the level $D_{12}$ to some level
$D_{12}'.$

More precisely, define the reminders
\begin{equation}
\label{eqn:sieveError}
R_{p_1,p_2,d}=|\cA_{dp_1p_2}|-\frac{x}{qdp_1p_2}
\end{equation}
that appear as the error terms in the sieving process. The strategy is to bound the collected contribution of $R_{p_1,p_2,d}$,
performing the summation over $p_1,p_2$.

Subdivide $[w,y]$ in dyadic ranges and estimate
$$
\sum_{\substack{p_1\sim P_1 \\p_2\sim P_2}} S(\cA_{p_1p_2}, p_2)
$$
for fixed $P_1,P_2$. We introduce $D_{12}'=D_{12}'(P_1,P_2)>D_{12}$ such that
\begin{equation}
\label{eqn:sievesumerror}
\sum_{d<D_{12}'}\Bigl|\sum_{\substack{p_1\sim P_1 \\p_2\sim P_2}} R_{p_1,p_2,d}\Bigr|<\frac{x^{1-\varepsilon}}{q}\,P_1P_2.
\end{equation}
With $D_{12}'$ as sieving limit,~\eqref{eqn:BTc=2} improves to
$$
S(\cA_{p_1p_2}, p_2)<\frac{2x}{\phi(q)p_1p_2\log D_{12}'}
$$
on average over $p_1\sim P_1,\, p_2\sim P_2$, provided $p_2^{3}>D_{12}'$. The gain in~\eqref{eqn:BT c=2 S(A,z)} becomes then of the order
\begin{equation}
\label{eqn:sieve qain 1}
\begin{split}
&\frac{x}{\phi(q)}\sum_{w\le p_1<p_2<y}\frac{1}{p_1p_2}\Bigl(\frac{1}{\log D_{12}}-\frac{1}{\log D_{12}'(p_1,p_2)}\Bigr)\\
& \le\frac{x}{\phi(q)}\sum_{w\le p_1<p_2<y}\frac{1}{p_1p_2}\,\frac{\log\frac{D_{12}'(p_1,p_2)}{D_{12}}}{(\log D_{12})^2}.
\end{split}
\end{equation}
Using the analysis from~\cite{FrIw} in order to express~\eqref{eqn:sieveError} as exponential sums, we obtain following bound on the
left hand side of~\eqref{eqn:sievesumerror}
\begin{equation}
\label{eqn:sieve to exp sums}
\sum_{d<D_{12}'}\sum_{0<|h|<H}\Bigl| \sum_{\substack{p_1\sim P_1\\ p_2\sim P_2}}(qdp_1p_2)^{-1}\widehat{f}\Bigl(\frac{h}{qdp_1p_2}\Bigr)e_{q}(-ahd^*p_1^*p_2^*)\Bigr|
\end{equation}
up to admissible error term. Here
$$
H=qdP_1P_2x^{2\varepsilon-1}
$$
and $f\ge 0$ is supported on $x^{1-\varepsilon}<t<x+x^{1-\varepsilon}$ satisfying $\widehat{f}(0)=x$ and $t^jf^{(j)}(t)\ll x^{\varepsilon}$ for $j\ge 0.$
Standard manipulations permit to express~\eqref{eqn:sieve to exp sums} in terms of trilinear sums ($D_{12}\le \widetilde{D}<D_{12}'$)
$$
\frac{x}{q\widetilde{D}P_1P_2}H\Bigl|\sum_{\substack{d\in I_0\\ p_1\in I_1\\ p_2\in I_2}}\alpha_d\beta_{p_1}\gamma_{p_2}e_q(-ahd^*p_1^*p_2^*)\Bigr|
$$
with $|\alpha_d|,|\beta_{p_1}|,|\gamma_{p_2}|\le 1$ and
$$
I_0\subset [\widetilde{D}, 2\widetilde{D}],\quad I_1\subset [P_1,2P_1],\quad I_2\subset [P_2, 2P_2]
$$
intervals. Note that these intervals always be enlarged to
$$
I_0=[\widetilde{D}, 2\widetilde{D}],\quad I_1=[P_1,2P_1],\quad I_2=[P_2, 2P_2].
$$
Take $\delta=(1-\theta)/5$ and $w=x^{\delta},\, y=x^{2\delta}$. Note that
$$
\widetilde{D}>D_{12}>\frac{D}{y^2}>x^{0.9\delta}.
$$
Performing the Karatsuba amplification followed by a trilinear estimate gives a saving of a factor $x^{c'\delta^3}$, hence
$$
\frac{x}{q\widetilde{D}P_1P_2}H\Bigl|\sum_{\substack{d\in I_0\\ p_1\in I_1\\ p_2\in I_2}}\alpha_d\beta_{p_1}\gamma_{p_2}e_q(-ahd^*p_1^*p_2^*)\Bigr|<
\frac{x^{1-c'\delta^3}}{q}H<x^{2\varepsilon-c'\delta^3}D_{12}'P_1P_2
$$
which is less than $x^{1-\varepsilon} q^{-1}$ for
$$
D_{12}'=\frac{x^{1+c''\delta^3}}{qP_1P_2}=D_{12} x^{c''\delta^3}.
$$
The condition $w^{5}>D_{12}'$ is satisfied. Thus, returning to~\eqref{eqn:sieve qain 1}, we obtain the saving
\begin{equation*}
\begin{split}
\frac{x}{\phi(q)}\sum_{x^{\delta}<p_1<p_2<x^{2\delta}}\frac{1}{p_1p_2}\,\frac{\log (x^{c''\delta^3})}{\delta^2(\log x)^2}\gg\frac{x\delta}{\phi(q)\log x}\sim \frac{x\delta^2}{\phi(q)\log\frac{x}{q}}
\end{split}
\end{equation*}
and since $\varepsilon$ is arbitrarily small, the result follows.

\section{Comments}

There is an alternative approach to Theorem~\ref{thm:BrunTitch} following the proof of~\cite[Theorem 13.1]{FrIw1}. On~\cite[p.262]{FrIw1}, there is a bound
$$
R(M,N)\ll x^{\varepsilon}\Bigl\{(M,N)-{\rm bilinear \,\,Kloosterman \,\, sum}\Bigr\}+\frac{x^{1-\varepsilon}}{q}.
$$
According to~\cite[Theorem 12.21]{FrIw1}, we may consider any factorization $D=MN$.
Let $x/q=x^{\delta}$ and take $k$ such that
$$
\frac{1}{2k-1}\le\frac{\delta}{2}<\frac{1}{2k-3}.
$$
Let
$$
N=q^{1/(2k-1)},\quad M=\frac{D}{N}.
$$
From our Theorem~\ref{thm:Kloost Karatsuba range} it follows that
$$
\Bigl\{(M,N)-{\rm bilinear \,\,Kloosterman \,\, sum}\Bigr\}<MN^{1-c/k^2}<D^{1-c\delta^2}.
$$
From condition $D^{1-c\delta^2}<x^{1-\varepsilon}q^{-1}$ we obtain,
$$
\pi(x;q,a)<\frac{(2-c\delta^2)x}{\phi(q)\log(x/q)}.
$$

As we have mentioned in the introduction one can prove that if $I\subset \F_p^*$ with $|I|<p^{1/2}$, then
$$
|I^{-1}+I^{-1}+I^{-1}|>|I|^{1.55+o(1)}.
$$
This is better than what one gets from Corollary~\ref{cor:thm kI*} for $k=3$. Let us prove this bound.
We can assume that $|I|=N>p^{1/3}$, as Corollary~\ref{cor:CillGar}
implies a better bound
$$
|I^{-1}+I^{-1}|>|I|^{2+o(1)}.
$$
Consider the congruence
$$
\frac{1}{x_1}+\frac{1}{x_2}+\frac{1}{x_3}\equiv \frac{1}{x_4}+\frac{1}{x_5}+\frac{1}{x_6}\pmod p,\quad x_1,\ldots,x_6\in I.
$$
The number $J_6$ of this congruence can be bounded by
$$
J_6\le (J_4J_8)^{1/2}.
$$
From Theorem~\eqref{thm:kI*} it follows that $J_8<N^{32/5+o(1)}$, and from Corollary~\ref{cor:CillGar} we have that
$$
J_4<\frac{N^{7/2+o(1)}}{p^{1/2}}.
$$
Thus,
$$
J_6<\frac{N^{99/20+o(1)}}{p^{-1/4}}.
$$
Using the relationship between the number of solutions of a congruence and the cardinality of the corresponding set, we conclude that
$$
|I^{-1}+I^{-1}+I^{-1}|\ge N^{21/20+o(1)}p^{1/4}>N^{1.55+o(1)}.
$$

We remark that the arguments in the proof of Theorem~\ref{thm:kI*} also give the following.

\begin{proposition}
\label{prop:th1general}
For any fixed positive integer constants $r$ and $k$ the number $J_{2k}^{(r)}$ of solutions of the congruence
$$
\frac{1}{x_1^r}+\ldots+\frac{1}{x_k^r}\equiv \frac{1}{x_{k+1}^r}+\ldots+\frac{1}{x_{2k}^r}\pmod p,\qquad x_1,\ldots, x_{2k}\in I,
$$
satisfies the bound
$$
J_{2k}^{(r)}<\Bigl(|I|^{2k^2/(k+1)}+\frac{|I|^{2k}}{p}\Bigr)|I|^{o(1)}.
$$
\end{proposition}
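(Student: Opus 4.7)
The plan is to follow the proof of Theorem~\ref{thm:kI*} verbatim, with one additional ingredient: an $r$-th power analogue of the polynomial count recorded in the Preliminaries. First I would treat the range $N=|I|<p^{(k+1)/(2k)}$. The auxiliary parameters $V=[N^{(k-1)/(k+1)}]$, $Y=[N^{2/(k+1)}]$, $I_1=[a+1,a+2N]$, $\cY=[0.5Y,Y]$, and the subset $\cV\subset[0.5V,V]$ produced by the multiplicative energy argument (Lemma~\ref{lem:multEnergyTwoInt}) depend only on $I_1$ and $[0.5V,V]$ and have nothing to do with $r$. For each $x_i\in I$ and each $v\in\cV$ there are $\gg Y$ pairs $(u'_i,y_i)\in I_1\times\cY$ with $x_i=u'_i-vy_i$; substituting $x_i^{-r}=(u'_i-vy_i)^{-r}$ and summing the congruence defining $J_{2k}^{(r)}$ over all such representations yields
$$
Y^{2k}V\,J_{2k}^{(r)}\ll \frac{1}{p}\sum_{n=0}^{p-1}\sum_{v\in\cV}\Bigl|\sum_{u'\in I_1}\sum_{y\in\cY}e_p\bigl(n(u'-vy)^{-r}\bigr)\Bigr|^{2k},
$$
the direct analogue of~\eqref{eqn:YZR<trig sum}.

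Next I apply the dyadic decomposition by the level sets $S_{v,B}=\{u'\in I_1:0.5B\le\eta_v(u')<B\}$ and the H\"older step exactly as before; after expanding the $2k$-th power, the inner quantity is bounded by the number of tuples $(v,u',y_1,\ldots,y_{2k})$ with $v\in\cV$, $u'\in S_{v,B}$, $y_i\in\cY$ satisfying
$$
\frac{1}{(u'-vy_1)^r}+\ldots+\frac{1}{(u'-vy_k)^r}\equiv\frac{1}{(u'-vy_{k+1})^r}+\ldots+\frac{1}{(u'-vy_{2k})^r}\pmod p.
$$
The essential input is therefore the $r$-th power version of the Preliminaries statement: for any $X,Y\subset\F_p$, the number of solutions $(x_1,\ldots,x_{2n},y)\in X^{2n}\times Y$ of
$$
\sum_{i=1}^n\frac{1}{(y+x_i)^r}\equiv\sum_{i=n+1}^{2n}\frac{1}{(y+x_i)^r}\pmod p
$$
is $O(|X|^n|Y|+|X|^{2n})$, with the implied constant depending on $n$ and $r$.

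This inner bound is the one point where the argument requires fresh verification, and it is the main (if mild) obstacle. Configurations containing at most $n$ distinct values among $x_1,\ldots,x_{2n}$ contribute $O(|X|^n|Y|)$ for obvious combinatorial reasons. For the remaining configurations, after reordering we may assume $x_1\notin\{x_2,\ldots,x_{2n}\}$, and I would consider the polynomial
$$
P(Z)=\sum_{i=1}^n\prod_{j\ne i}(Z+x_j)^r-\sum_{i=n+1}^{2n}\prod_{j\ne i}(Z+x_j)^r.
$$
Then $\deg P\le r(2n-1)$ and $P(-x_1)=\prod_{j\ne 1}(x_j-x_1)^r\ne 0$, so $P$ is a nonzero polynomial, and each $(x_1,\ldots,x_{2n})$ admits at most $r(2n-1)$ values of $y$, yielding $O(|X|^{2n})$ in total.

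With this inner bound in place, summing over $v\in\cV$ reproduces the estimate $\ll Y^kVN^{1+o(1)}/B+VY^{2k}$ from the $r=1$ case, and the subsequent arithmetic in the proof of Theorem~\ref{thm:kI*} carries over verbatim to give $J_{2k}^{(r)}<N^{2k^2/(k+1)+o(1)}$ in the range $N<p^{(k+1)/(2k)}$. For $N\ge p^{(k+1)/(2k)}$, I would partition $I$ into $\ll Np^{-(k+1)/(2k)}$ subintervals of length at most $p^{(k+1)/(2k)}$, apply H\"older to reduce to a single subinterval of the small range, and combine exactly as in the last paragraph of the proof of Theorem~\ref{thm:kI*} to produce the complementary term $N^{2k+o(1)}/p$. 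The dependence on $r$ survives only through harmless constants in the polynomial count.
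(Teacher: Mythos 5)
Your plan --- run the proof of Theorem~\ref{thm:kI*} verbatim, swapping in an $r$-th-power analogue of the polynomial count from the Preliminaries --- is the intended route, and the parts you spell out are mostly sound. The $r$-independence of $V$, $Y$, $I_1$, $\cY$, $\cV$ is correct, the reduction of $J_{2k}^{(r)}$ to the exponential-sum average is correct, and the polynomial
$$
P(Z)=\sum_{i=1}^n\prod_{j\ne i}(Z+x_j)^r-\sum_{i=n+1}^{2n}\prod_{j\ne i}(Z+x_j)^r
$$
is indeed nonzero (since $P(-x_1)=\prod_{j\ne 1}(x_j-x_1)^r\ne 0$ once $x_1\notin\{x_2,\ldots,x_{2n}\}$) of degree at most $r(2n-1)$, so the $r$-th-power analogue of the Preliminaries count holds with the implied constant depending on $r$ as well as $n$. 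The reduction of the range $N\ge p^{(k+1)/(2k)}$ to a single short subinterval via H\"older is also fine.

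There is, however, a genuine gap in the inner bound. You apply the polynomial count with $v$ fixed, getting $O(Y^k|S_{v,B}|+Y^{2k})$, and then sum over $v\in\cV$ to arrive at $Y^kVN^{1+o(1)}/B+VY^{2k}$. But the proof of Theorem~\ref{thm:kI*} obtains the second term $Y^{2k}B$, not $VY^{2k}$, and the difference is decisive: substituting your $VY^{2k}$ into~\eqref{eqn:Th1 passing to SvB} and dividing by $Y^{2k}V$ yields a contribution $N^{2k-1+o(1)}B^{1-2k}$, which for the smallest admissible dyadic scale $B=O(1)$ is $N^{2k-1+o(1)}$. That exceeds the target $N^{2k^2/(k+1)+o(1)}$ for every $k\ge 2$, so the argument does not close. (The same issue would already be present in your reading of the $r=1$ case.)

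The missing observation is that $\eta_v(u')$ is a function of the ratio $z:=u'v^{-1}\pmod p$ alone: it equals the number of pairs $(v_1,u_1')\in[0.5V,V]\times I_1$ with $u_1'v_1^{-1}\equiv z\pmod p$. Multiplying the congruence through by $v^r$ shows that, once the $y$-tuple has more than $k$ distinct entries, $z$ must be a root modulo $p$ of a nonzero polynomial of degree at most $r(2k-1)$, hence there are $O_{r,k}(1)$ admissible values of $z$. For each such $z$, if any pair $(v,u')$ with $v\in\cV$, $u'\in S_{v,B}$ has ratio $z$, then every pair with ratio $z$ has its $\eta$-value in $[B/2,B)$, and there are therefore fewer than $B$ of them in total. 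So the number of admissible $(v,u')$ per nondegenerate $y$-tuple is $O_{r,k}(B)$, not $V$, and the correct inner bound is $Y^kVN^{1+o(1)}/B+Y^{2k}B$. With that in place the remaining arithmetic does carry over verbatim to the $r$-th-power case.
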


Thus, in particular we obtain
\begin{corollary}
\label{cor:th1general}
Let $r_1,r_2,k_1,k_2$ be fixed positive integer constants, $I_1=[a_1+1,a_1+N_1]$,\, $I_2=[a_2+1, a_2+N_2]$ and
$$
N_1<p^{\frac{k_1+1}{2k_1}},\quad N_2<p^{\frac{k_2+1}{2k_2}}.
$$
Then for any complex coefficients $\alpha_1(x_1), \alpha_2(x_2)$ with $|\alpha_i(x_i)|\le 1$ one has
\begin{equation*}
\begin{split}
\max_{(a,p)=1}\Bigl|\sum_{x_1\in I_1}\sum_{x_2\in I_2}&\alpha_1(x_1)\alpha_2(x_2)e_p(ax_1^{-r_1}x_2^{-r_2})\Bigr|<\\
&\Bigl(p^{\frac{1}{2k_1k_2}}N_1^{-\frac{1}{k_2(k_1+1)}}N_2^{-\frac{1}{k_1(k_2+1)}}\Bigr)(N_1N_2)^{1+o(1)}.
\end{split}
\end{equation*}
\end{corollary}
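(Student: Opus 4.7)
The plan is to follow the proof of Theorem~\ref{thm:Th1GenBil} almost verbatim, replacing the one place where Theorem~\ref{thm:kI*} is invoked by an appeal to Proposition~\ref{prop:th1general}, whose statement is precisely tailored to the $r$-th power inverse setting. Write
$$
S=\sum_{x_1\in I_1}\sum_{x_2\in I_2}\alpha_1(x_1)\alpha_2(x_2)e_p(ax_1^{-r_1}x_2^{-r_2}).
$$
First I would apply H\"older's inequality in the $x_1$-variable to the $k_2$-th power, unfold the inner modulus into $k_2$ new variables $y_1,\ldots,y_{k_2}\in I_2$ and absorb complex weights of modulus one. The resulting phase becomes $ax_1^{-r_1}(y_1^{-r_2}+\ldots+y_{k_2}^{-r_2})$. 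A second H\"older in the $x_2$-variable, viewed as a sum linearised through the level-values $\lambda$ of the symmetric function $y_1^{-r_2}+\ldots+y_{k_2}^{-r_2}$, followed by the Cauchy--Schwarz step and orthogonality, yields exactly as in~\eqref{eqn:th91011} the inequality
$$
|S|^{2k_1k_2}\le pN_1^{2k_1k_2-2k_1}N_2^{2k_1k_2-2k_2}J_{2k_1}^{(r_1)}(I_1)J_{2k_2}^{(r_2)}(I_2),
$$
where $J_{2k}^{(r)}(I)$ denotes the number of solutions of the congruence
$$
x_1^{-r}+\ldots+x_k^{-r}\equiv x_{k+1}^{-r}+\ldots+x_{2k}^{-r}\pmod p,\qquad x_i\in I.
$$

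Next I would invoke Proposition~\ref{prop:th1general} with the hypotheses $N_i<p^{(k_i+1)/(2k_i)}$ to conclude
$$
J_{2k_i}^{(r_i)}(I_i)<N_i^{2k_i^2/(k_i+1)+o(1)},\qquad i=1,2,
$$
since in this range the first term in the bound of Proposition~\ref{prop:th1general} dominates. Substituting into the preceding display and extracting the $2k_1k_2$-th root, the exponent of $N_i$ in $|S|$ simplifies to $1-1/(k_j(k_i+1))$ for $\{i,j\}=\{1,2\}$, and the $p$ contributes a factor $p^{1/(2k_1k_2)}$, producing the claimed bound.

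The translation invariance of the $r$-th power inverse problem with respect to shifts of the interval is automatic in this derivation, since the two H\"older steps do not see the position of $I_1$ or $I_2$, only their cardinalities. I do not anticipate any real obstacle: the only substantive input is Proposition~\ref{prop:th1general}, and the remainder is a mechanical repetition of the Theorem~\ref{thm:Th1GenBil} argument together with the arithmetic of simplifying the exponents.
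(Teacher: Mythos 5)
Your proof is correct and coincides with what the paper intends. The paper derives Corollary~\ref{cor:th1general} from Proposition~\ref{prop:th1general} by exactly the route through which Theorem~\ref{thm:Th1GenBil} was derived from Theorem~\ref{thm:kI*} — two applications of H\"older to obtain the analogue of~\eqref{eqn:th91011} with $J_{2k_i}^{(r_i)}$ in place of $J_{2k_i}$, then Proposition~\ref{prop:th1general} with the hypothesis $N_i<p^{(k_i+1)/(2k_i)}$ to drop the $|I|^{2k}/p$ term — and your exponent simplification $1-1/k_2+k_1/(k_2(k_1+1))=1-1/(k_2(k_1+1))$ checks out.
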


One may apply the bilinear sums of Corollary~\ref{cor:th1general} together with Vaughan's formula~\cite{Vau} to bound the corresponding sums over primes, similar to those in~\cite[Theorems A1, A9]{B1} and~\cite[Corollary 1.5]{FM}.

\begin{corollary}
\label{cor:th1KloostPrimespower}
Let $r\in\Z_{+}$ and  $p>N>p^{1/2+\varepsilon}$ for some $\varepsilon>0$. Then
$$
\max_{(a,p)=1}\Bigl|\sum_{\substack{x<N\\ x\,\, {\rm prime}}}e_p(ax^{-r})\Bigr|<N^{1-\delta}
$$
for some $\delta=\delta(\varepsilon;r)>0.$
\end{corollary}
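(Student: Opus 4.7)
The strategy is to apply Vaughan's identity to reduce the sum over primes to bilinear sums that can be bounded by Corollary~\ref{cor:th1general}, with the remaining ``Type~I'' sums handled by the Weil bound on incomplete Kloosterman sums. By partial summation it suffices to estimate $T=\sum_{n\le N}\Lambda(n)e_p(an^{-r})$. Expanding $\Lambda$ via Vaughan's identity with $U=V=N^{\beta}$, for a small constant $\beta=\beta(\varepsilon)>0$ to be chosen, one obtains a Type~I contribution $T_I=\sum_{m\le N^{2\beta}}a_m\sum_{n\le N/m}e_p(a(mn)^{-r})$ with $|a_m|\ll N^{o(1)}$, together with $O(\log^2 N)$ Type~II bilinear sums $T_{II}(M)=\sum_{m\sim M}\sum_{n\sim N/M}\alpha_m\beta_n\,e_p(a(mn)^{-r})$ ranging over dyadic $M\in[N^{\beta},N^{1-\beta}]$, where $|\alpha_m|,|\beta_n|\ll N^{o(1)}$.

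For $T_I$, I would fix $m$, use the factorization $(mn)^{-r}\equiv m^{-r}n^{-r}\pmod p$, and invoke the Weil bound to estimate $|\sum_{n\le N/m}e_p((am^{-r})n^{-r})|\ll p^{1/2}\log p$, which applies since $am^{-r}\not\equiv 0\pmod p$ and the function $n\mapsto n^{-r}$ on $\F_p^*$ is nonconstant. Summing trivially over $m\le N^{2\beta}$ gives $|T_I|\ll N^{2\beta+o(1)}p^{1/2}\ll N^{2\beta+1/(1+2\varepsilon)+o(1)}$, using $p^{1/2}<N^{1/(1+2\varepsilon)}$. This is $\ll N^{1-\delta_1}$ for some $\delta_1>0$ provided $\beta<\varepsilon/(1+2\varepsilon)$.

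For $T_{II}(M)$, the same factorization $(mn)^{-r}\equiv m^{-r}n^{-r}\pmod p$ places the sum into the form covered by Corollary~\ref{cor:th1general} with $r_1=r_2=r$. By the $(m,n)\leftrightarrow(n,m)$ symmetry of the kernel I would assume $M\le\sqrt N$, so that $N_1:=M\le N^{1/2}$ and $N_2:=N/M\in[N^{1/2},N^{1-\beta}]$. I would then take $k_1=k_2=k$ equal to the smallest integer strictly exceeding $1/(2\varepsilon)$; the admissibility condition $N_2<p^{(k+1)/(2k)}$ reduces to $(1-\beta)(1/2+\varepsilon)<1/2+1/(2k)$, which holds once $\beta$ is at least a suitable constant multiple of $\varepsilon^{2}$. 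Within this parameter window Corollary~\ref{cor:th1general} yields $|T_{II}(M)|<\bigl(p^{1/(2k^{2})}N^{-1/(k(k+1))}\bigr)N^{1+o(1)}<N^{1-\delta_2}$ for some $\delta_2=\delta_2(\varepsilon)>0$ uniformly in $M$; summing over the $O(\log N)$ dyadic scales of $M$ only costs $N^{o(1)}$.

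Combining the Type~I and Type~II estimates gives $|T|\ll N^{1-\delta}$ with $\delta=\min(\delta_1,\delta_2)-o(1)>0$, and the standard deduction (absorbing prime powers $p^{k}$ with $k\ge 2$ as an error of size $O(N^{1/2})$) passes this bound to the sum over primes. The main obstacle is the joint tuning of $\beta$ and $k$: $\beta$ must be small enough to make the Type~I contribution nontrivial, yet large enough that the admissibility window of Corollary~\ref{cor:th1general} admits an integer $k>1/(2\varepsilon)$ for the worst Type~II scale $N_2=N^{1-\beta}$. A short calculation shows the acceptable interval is $\beta\in(c_1\varepsilon^{2},c_2\varepsilon)$ for explicit $c_1,c_2>0$, which is nonempty for every $\varepsilon>0$; the resulting saving $\delta$ is only polynomial in $\varepsilon$ but this is enough for the qualitative statement.
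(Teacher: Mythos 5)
Your overall strategy (Vaughan's identity, Weil for Type~I, Corollary~\ref{cor:th1general} for Type~II, then partial summation) is exactly the one the paper sketches, and the paper's explicit proof of the full--range companion Corollary~\ref{cor:th1KloostPrimespowerFull range} proceeds along the same lines (there with $U=V=p^{1/3}$ and $k_1=k_2=2$). The Type~I computation and the use of symmetry to assume $N_1\le N^{1/2}$ are both fine.

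However there is a genuine gap in the Type~II step, and it concerns the uniformity over $N$. The hypothesis is $p^{1/2+\varepsilon}<N<p$, and the constant $\delta$ must depend only on $\varepsilon$ and $r$. You fix $k$ to be the smallest integer exceeding $1/(2\varepsilon)$ and take $\beta\asymp\varepsilon^2$, and then write the admissibility requirement $N_2<p^{(k+1)/(2k)}$ as $(1-\beta)(1/2+\varepsilon)<1/2+1/(2k)$. That reformulation is correct only when $N=p^{1/2+\varepsilon}$ exactly. In general $N_2$ can be as large as $N^{1-\beta}$, and with $N$ close to $p$ one has $N^{1-\beta}$ close to $p^{1-\beta}$. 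The admissibility requirement then becomes $\beta>1/2-1/(2k)$, which for your $k\asymp 1/\varepsilon$ (large) forces $\beta$ close to $1/2$; this is incompatible with the Type~I requirement $\beta<\varepsilon/(1+2\varepsilon)$, which is small. So the hypotheses of Corollary~\ref{cor:th1general} are simply violated on the dyadic scales with the longest inner variable, and the proof does not go through for $N$ near $p$ with a single choice of $(k,\beta)$ determined by $\varepsilon$ alone.

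There are two natural repairs. One is to make $k$ and $\beta$ depend on the actual quantity $\varepsilon'=\log N/\log p-1/2$ (not merely on the lower bound $\varepsilon$): the compatible window is roughly $1/(2\varepsilon')<k<1/\varepsilon'$ together with $4\varepsilon'^2/(1+2\varepsilon')^2\lesssim\beta\lesssim\varepsilon'/(1+2\varepsilon')$, which is non-empty for every $\varepsilon'\in(0,1/2)$; but then one must additionally verify that the resulting saving $\delta(\varepsilon')$ is bounded below on $[\varepsilon,1/2)$, which you do not address. The cleaner fix, following the mechanism of Theorem~\ref{thm:Kloost Karatsuba range}, is to bypass Corollary~\ref{cor:th1general} and use~\eqref{eqn:th91011} together with \emph{both} terms of Theorem~\ref{thm:kI*}: this gives an unconditional bilinear bound, with no admissibility hypothesis, that is non-trivial whenever $p<N_1^{2k_1/(k_1+1)}N_2^{2k_2/(k_2+1)}$, and one can then choose $k_1,k_2$ per dyadic scale $M$. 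As currently written, the proposal does not establish the claimed uniformity of $\delta$ in $N$.
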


Next we remark that the result from~\cite{Gar} implies the bound
$$
\max_{(a,p)=1}\Bigl|\sum_{\substack{x<p\\ x\,\, {\rm prime}}}e_p(ax^{-1})\Bigr|<p^{15/16+o(1)}.
$$
Our Corollary~\ref{cor:th1general} leads to
\begin{corollary}
\label{cor:th1KloostPrimespowerFull range}
For any fixed positive integer constant $r$ the following bound holds:
$$
\max_{(a,p)=1}\Bigl|\sum_{\substack{x<p\\ x\,\, {\rm prime}}}e_p(ax^{-r})\Bigr|<p^{23/24+o(1)}.
$$
\end{corollary}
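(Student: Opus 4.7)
The plan is to apply Vaughan's (or more flexibly Heath-Brown's) identity to the sum over primes and bound the resulting pieces using Corollary~\ref{cor:th1general} together with Weil's bound. Writing
$$
\sum_{\substack{x<p\\ x\ \text{prime}}} e_p(a x^{-r})
$$
in terms of $\sum_{x\le p}\Lambda(x)e_p(ax^{-r})$ (the logarithmic weight being absorbed into $p^{o(1)}$), I would apply Heath-Brown's identity with $J=3$ and dyadically decompose the six resulting variables. After regrouping, we are reduced to bounding \emph{Type II} bilinear sums
$$
\sum_{m\sim M}\sum_{n\sim N}\alpha_m\beta_n\, e_p(am^{-r}n^{-r}),\qquad MN\asymp p,\quad |\alpha_m|,|\beta_n|\le 1,
$$
and \emph{Type I} sums $\sum_{m\le M_0}\alpha_m\sum_{n\le p/m}e_p(am^{-r}n^{-r})$, using crucially that $(mn)^{-r}\equiv m^{-r}n^{-r}\pmod p$ so that the summand genuinely splits multiplicatively.

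For a Type II configuration with $M,N\in[p^{1/4},p^{3/4}]$, Corollary~\ref{cor:th1general} at the ``magic'' choice $k_1=k_2=2$ yields
$$
\Bigl|\sum_{m\sim M}\sum_{n\sim N}\alpha_m\beta_n\, e_p(am^{-r}n^{-r})\Bigr|<p^{1/8}M^{-1/6}N^{-1/6}(MN)^{1+o(1)}=p^{\frac{1}{8}+\frac{5}{6}+o(1)}=p^{\frac{23}{24}+o(1)},
$$
which is precisely the target exponent (since $\tfrac18+\tfrac56=\tfrac{23}{24}$). For Type I sums with $M_0\le p^{1/4}$, the inner sum $\sum_{n\le p/m}e_p(b_m n^{-r})$, with $b_m=am^{-r}\not\equiv 0\pmod p$, is a partial sum of an exponential in a rational function of fixed degree, hence $O(\sqrt p\log p)$ by Weil, giving a total of $O(p^{3/4+o(1)})$ which is more than acceptable.

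The balancing is the combinatorial heart: among the six dyadic sizes $V_1,\ldots,V_6$ with $\prod V_i\asymp p$ arising from Heath-Brown's decomposition, a greedy grouping argument shows that whenever all $V_i\le p^{1/2}$ one can partition $\{V_1,\ldots,V_6\}$ into two groups with products $M,N\in[p^{1/4},p^{3/4}]$ (placing us in the Type II regime). If some $V_i>p^{1/2}$, then such an index is unique; if additionally $V_i\le p^{3/4}$, one takes $M=V_i$ and $N=p/V_i\in[p^{1/4},p^{1/2}]$, still in the admissible window for Corollary~\ref{cor:th1general}. Finally, if $V_i>p^{3/4}$, the remaining product is $<p^{1/4}$ and we invoke the Type I estimate via Weil. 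Arrangements of this type appear in~\cite[Theorems A1, A9]{B1} and~\cite[Corollary 1.5]{FM}.

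The main obstacle is purely bookkeeping: confirming that Heath-Brown's decomposition (including the low--$j$ cases $j=1,2$) together with dyadic splitting always leads to one of the two configurations above, and that the $\log n_1$ weights introduced by Heath-Brown's identity are removed by partial summation at $(\log p)^{O(1)}$ cost, which is absorbed into $p^{o(1)}$. No substantive new ingredient beyond Corollary~\ref{cor:th1general} at $k_1=k_2=2$ is required.
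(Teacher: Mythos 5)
Your proposal is correct, and the key step is identical to the paper's: applying Corollary~\ref{cor:th1general} at $k_1=k_2=2$, which gives $p^{1/8}(N_1N_2)^{5/6+o(1)}\le p^{23/24+o(1)}$ whenever both bilinear variable sizes are below $p^{3/4}$ and their product is at most $p$, and falling back on the Weil bound $O(p^{1/2}\log p)$ for the extremely unbalanced ranges. Where you differ from the paper is the combinatorial scaffolding: you invoke Heath-Brown's identity with $J=3$ and then need the greedy balancing argument to guarantee a partition of the six dyadic sizes into two groups each below $p^{3/4}$ (note that the lower bound $M,N\ge p^{1/4}$ in your claimed window is not actually required --- Corollary~\ref{cor:th1general} only needs the upper cutoff, and the resulting bound $p^{1/8}(MN)^{5/6}$ is already $\le p^{23/24}$ since $MN\le p$). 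The paper instead uses Vaughan's identity with $U=V=p^{1/3}$, which gives directly the standard four pieces $W_1,\dots,W_4$; $W_1$ is trivial, $W_3$ and the short-outer-range part of $W_2$ are disposed of by Weil at $p^{5/6}$, and the Type I/II pieces with outer variable in $[p^{1/3},p^{2/3}]$ are exactly bilinear forms with both sides in $[p^{1/3},p^{2/3}]\subset(0,p^{3/4})$, so Corollary~\ref{cor:th1general} applies with no balancing lemma needed. Heath-Brown's identity is more flexible and would be the tool of choice if one wanted to push the exponent below $23/24$; for this result Vaughan's identity is slightly cleaner because the admissible window $(0,p^{3/4})$ at $k_1=k_2=2$ is wide enough that the fixed cut $U=V=p^{1/3}$ already places all nontrivial pieces inside it.
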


Let us prove it. It suffices to establish the bound
$$
\Bigl|\sum_{n\le p}\Lambda(n)e_p(an^{-r})\Bigr| <p^{23/24+o(1)}
$$
and then the result follows by partial summation. Here $\Lambda(n)$ is the Mangoldt function.

Below we use $A\lessapprox B$ to mean that $A<Bp^{o(1)}$. From the Vaughan's identity (see~\cite[Chapter 24]{Dav}), we have
$$
\sum_{n\le p}\Lambda(n)e_p(an^{-r})\lessapprox W_1+W_2+W_3+W_4,
$$
where
\begin{eqnarray*}
&& W_1=\Bigl|\sum_{n\le U}\Lambda(n)e_p(an^{-r})\Bigr|;\\
&& W_2=\sum_{n\le UV}\Bigl|\sum_{m\le p/n}e_p(an^{-r}m^{-r})\Bigr|;\\
&& W_3=\sum_{n\le V}\Bigl|\sum_{m\le p/n}(\log m) e_p(an^{-r}m^{-r})\Bigr|;\\
&& W_4=\sum_{U<n\le p/V}\Bigl|\sum_{V<m\le
p/n}\beta_me_p(an^{-r}m^{-r})\Bigr|.
\end{eqnarray*}
Here $U\ge 2, V\ge 2$ are parameters with $UV\le p,$
$$
\beta_m=\sum_{\substack{d|m\\ d\le V}}\mu(d).
$$
Below we shall also use Weil's bound of the form
$$
\Bigl|\sum_{x=1}^{p-1}e_p(ax^{-r}+bx)\Bigr|\ll p^{1/2}.
$$
More precisely, we shall use a consequence of this bound, namely if $I$ is an interval in $\F_p$ then
$$
\Bigl|\sum_{x\in I}e_p(ax^{-r})\Bigr|\lessapprox p^{1/2}.
$$

We take $U=V=p^{1/3}$ and estimate $W_1$ trivially:
$$
W_1\lessapprox U= p^{1/3}.
$$

To estimate $W_2$  we split the range of summation over $n$ into dyadic
intervals and get, for some $L\le p^{2/3},$
$$
W_2\lessapprox \sum_{L\le n\le 2L}\Bigl|\sum_{m\le
p/n}e_p(an^{-r}m^{-r})\Bigr|.
$$
If $L < p^{1/3}$ then we apply Weil's bound to the sum over $m$ and get
$$
W_2\lessapprox Lp^{1/2}\lessapprox p^{5/6}.
$$
If $p^{1/3}<L < p^{2/3}$ then using the standard smoothing argument we
extend the summation over $m$ to $m\le p/L$ and apply Corollary~\ref{cor:th1general} with $k_1=k_2=2$ to get
$$
W_2\lessapprox p^{23/24}.
$$

To estimate $W_3$ we use partial summation (to the sum over $m$) and Weil's bound and get
$$
W_3\lessapprox\sum_{n< p^{1/3}}p^{1/2}\lessapprox p^{5/6}.
$$

To estimate $W_4,$ we split the range of summation over $n$ into
dyadic intervals and get, for some $p^{1/3}<L\le p^{2/3},$
$$
W_4\lessapprox \sum_{L\le n< 2L}\Bigl|\sum_{p^{1/3}<m\le
p/n}\beta_me_p(an^{-r}m^{-r})\Bigr|.
$$
Applying the smoothing argument to extend the sum over $m$ to $m<p/L$ and using Corollary~\ref{cor:th1general} with $k_1=k_2=2$, we get
$$
W_4\lessapprox p^{23/24}
$$
and Corollary~\ref{cor:th1KloostPrimespowerFull range} follows.

\end{document}